\pgfplotsset{compat=1.3}
\crefname{subsection}{subsection}{subsections}
\numberwithin{equation}{section}
\newcommand{\vertiii}[1]
{\vert\kern-0.25ex\vert\kern-0.25ex\vert #1 
\vert\kern-0.25ex\vert\kern-0.25ex\vert}
\newcommand{\R}{\mathbb{R}}
\renewcommand{\d}[1]{\,\mathrm{d}#1}
\renewcommand{\div}{\mathrm{div}}
\newcommand{\I}{\mathrm{I}}
\newcommand{\tr}{\mathrm{tr}}
\newcommand{\D}{\mathrm{D}}
\newcommand{\Tcal}{\mathcal{T}}
\newcommand{\Fcal}{\mathcal{F}}
\newcommand{\s}{\mathrm{s}}
\newcommand{\pw}{\mathrm{pw}}
\newtheorem{theorem}{Theorem}[section]
\newtheorem{lemma}[theorem]{Lemma}
\newtheorem{corollary}[theorem]{Corollary}
\theoremstyle{remark}
\newtheorem{remark}[theorem]{Remark}
\theoremstyle{definition}
\newtheorem{assumption}{Assumption}
\newcounter{cnst}
\newcommand{\newcnst}{%
	\refstepcounter{cnst}%
	\ensuremath{C_{\thecnst}}}
\newcommand{\cnst}[1]{\ensuremath{C_{\ref{#1}}}}
\begin{document}

\title[HHO for linear elasticity]{Locking-free hybrid high-order method for linear elasticity}
\author[C.~Carstensen, N.~T.~Tran]
      {Carsten Carstensen \and Ngoc Tien Tran}
\thanks{The authors thank Lukas Gehring (Universit\"at Jena) for his valuable comments that lead to the completion of the proof of \Cref{lem:stability-Galerkin}.\\
	\indent The second author received funding from the European Union's Horizon 2020 research and innovation programme (project RandomMultiScales, grant agreement No.~865751).}
\address[C.~Carstensen]{%
         Humboldt-Uni\-ver\-si\-t\"at zu Berlin,
         10117 Berlin, Germany}
\email{cc@math.hu-berlin.de}
\address[N.~T.~Tran]{Universit\"at Augsburg, 86159 Augsburg, Germany}
\email{ngoc1.tran@uni-a.de}
\date{\today}

\keywords{linear elasticity, hybrid high-order, error estimates, a~priori, a~posteriori}
\subjclass[2010]{65N12, 65N30, 65Y20}

\begin{abstract}
	The hybrid-high order (HHO) scheme has many successful applications including linear elasticity as the 
	first step towards computational solid mechanics. The striking advantage is the simplicity among other higher-order nonconforming schemes and its geometric flexibility as a polytopal method on the expanse of a parameter-free refined stabilization. 
	This paper utilizes 
	just one reconstruction operator for the linear Green strain and therefore does not rely on a split in deviatoric and spherical behaviour as in the classical HHO discretization.
	The a priori error analysis provides quasi-best approximation with $\lambda$-independent equivalence constants. The reliable and (up to data oscillations) efficient 
	a posteriori error estimates are stabilization-free and $\lambda$-robust.
	The error analysis is carried out on simplicial meshes to allow
	conforming piecewise polynomials finite elements in the kernel of the stabilization terms.
	Numerical benchmarks provide empirical evidence
	for optimal convergence rates of the a posteriori error estimator in some 
	associated adaptive mesh-refining algorithm also in the incompressible limit, where this paper provides corresponding assertions for the Stokes problem.
\end{abstract}
\maketitle
\section{Introduction}

\subsection{Motivation}
The first HHO elasticity model has been analyzed in \cite{DiPietroErn2015} 
with emphasis on general meshes and two different recovery operators for the linear Green strain and for the divergence of the displacements. The a priori error analysis therein provides $\lambda$-robust convergence rates relative to the possibly restricted elliptic regularity on polygons with general boundary conditions of changing
type. The first paper on the HHO in nonlinear elasticity \cite{BottiDiPietroSochala2017} suggests an HHO method with one recovery operator for the linear Green strain tensor.
Optimal convergence rates in \cite{DiPietroErn2015,DiPietroDroniou2020} under unrealistic regularity assumptions on the exact solutions are \emph{not} visible for quasi-uniform triangulations in typical benchmark problems of computational mechanics.

The analysis of this paper departs from $\lambda$-robust quasi-best approximation results for the $L^2$ stress error \cite{CarstensenSchedensack2015,CarstensenGallistlSchedensack2016,LedererStenberg2024} in discrete problems with a smoother \cite{VeeserZanottiII,VeeserZanottiIII,ErnZanotti2020,CN21a} or without (and then up to data oscillations for $L^2$ source terms).
A first benefit is the validity of the error estimates under minimal regularity assumptions.
A second benefit arises in the efficiency of the stabilization -- a stepping stone towards stabilization-free a~posteriori error control \cite{BertrandCarstensenGraessle2021} that is
of interest in the convergence analysis of adaptive mesh-refining algorithms \cite{VeigaCanutoNochettoVaccaVerani2023}.
A duality argument and \cite{CarstensenHeuer2024} lead to $L^2$ estimates for the displacement beyond typical model problems on convex domains with pure Dirichlet boundary conditions \cite{BrennerScott2008,ErnGuermondII2021}.
While HHO methods can be defined on polytopal meshes, the error analysis of this paper exploits the orthogonality of the stress error with conforming test functions. 
Therefore, it is restricted to regular triangulations into simplices.
Other contributions \cite{BirdCoombsGiani2019,MoraRivera2020} on stabilized schemes allow polygonal meshes in the derivation of a~posteriori error estimators but avoid a $\lambda$-robustness analysis.



\subsection{Mathematical model}\label{sec:model}
Let $\Omega \subset \mathbb{R}^n$ with $n = 2,3$ be a bounded polyhedral Lipschitz domain in two or three space dimensions with boundary $\partial \Omega$ that is split into a closed Dirichlet part $\Gamma_\mathrm{D} \subset \partial \Omega$ of positive surface measure and a remaining Neumann part $\Gamma_\mathrm{N} \coloneqq \partial \Omega \setminus \Gamma_\mathrm{D}$.
Given $f \in L^2(\Omega)^n$ and $g \in L^2(\Gamma_{\mathrm{N}})^{n}$, the model problem in linear elasticity of this paper seeks the solution $u \in H^1(\Omega)^n$ to
\begin{align}\label{def:linear-elasticity}
	-\div\,\sigma = f \text{ in } \Omega, \quad\sigma \coloneqq \mathbb{C} \varepsilon(u), \quad u = 0 \text{ on } \Gamma_\mathrm{D}, \quad \sigma \nu = g \text{ on } \Gamma_\mathrm{N}.
\end{align}
The linearized Green strain $\varepsilon(v) \coloneqq \mathrm{sym}(\D v)$ is the symmetric part of the gradient $\D v$ and the isotropic elasticity tensor $\mathbb{C}$ acts as $\mathbb{C} \tau \coloneqq 2\mu \tau + \lambda \mathrm{tr}(\tau) \mathrm{I}_{n \times n}$ for any $\tau \in L^2(\Omega)^{n \times n}$
with the Lam\'e parameters $\lambda \geq 0$ and $\mu > 0$.
We assume that, for $|\Gamma_{\mathrm{N}}| > 0$, the parameters $\lambda$, $\mu$ are piecewise constant such that $\mu$ is bounded away from zero $0 < \mu_0 \leq \mu \leq \mu_1$ by positive numbers $\mu_0 \leq \mu_1$ and, for $|\Gamma_{\mathrm{N}}| = 0$, $\lambda$ and $\mu$ are positive constants.
The weak formulation seeks the solution $u \in V \coloneqq \{v \in H^1(\Omega)^n : v = 0 \text{ on } \Gamma_\mathrm{D}\}$ to
\begin{align}\label{def:continuous-problem}
	\int_\Omega \mathbb{C} \varepsilon (u) : \varepsilon (v) \d{x} = \int_\Omega f \cdot v \d{x} + \int_{\Gamma_\mathrm{N}} g \cdot v \d{s} \quad\text{for all } v \in V.
\end{align}

\subsection{Main results}
The HHO methodology \cite{DiPietroErnLemaire2014,DiPietroErn2015,DiPietroDroniou2020} allows for a reconstruction operator $\mathcal{G} : V_h \to P_k(\Tcal)^{n \times n}$ of the gradient from a discrete ansatz space $V_h$ of $V$ onto the space $P_k(\Tcal)^{n \times n}$ of matrix-valued piecewise polynomials of degree at most $k$.
The approximation $\mathbb{C}\varepsilon_h$ of $\mathbb{C} \varepsilon$ with $\varepsilon_h \coloneqq \mathrm{sym} \,\mathcal{G}$ does not rely on a split in deviatoric and spherical behavior as in \cite{DiPietroErn2015}.
This was proposed in \cite{BottiDiPietroSochala2017} and the
HHO method therein seeks the solution $u_h \in V_h$ to
\begin{align*}
	a_h(u_h,v_h) = F_h(v_h) \quad\text{for any } v_h \in V_h
\end{align*}
with a scalar product $a_h$ in $V_h$ and bounded linear functional $F_h$.
The reconstruction operator $\varepsilon_h$ leads to a straight-forward definition of the discrete stress $\sigma_h \coloneqq \mathbb{C} \varepsilon_h u_h$ with the $L^2$ orthogonality $\sigma - \sigma_h \perp \varepsilon(P_{k+1}(\Tcal)^{n} \cap V)$.
The latter allows for the application of the tr-div-dev lemma in the error analysis for $\lambda$-robust estimates.
We establish, up to data oscillations, the quasi-best approximation
\begin{align}\label{ineq:a-priori}
	(\mu_0/\mu_1)^{1/2}\|\sigma - \sigma_h\| &+ \mu_0^{1/2}\|\I u - u_h\|_{a_h} + \mu_0^{1/2}|u_h|_\s\nonumber\\
	&\leq C_\mathrm{qb}(\|(1 - \Pi_\Tcal^k) \sigma\| + \mathrm{osc}(f,\Tcal) + \mathrm{osc}(g,\Fcal_\mathrm{N}))
\end{align}
under a mild assumption on the geometry,
where $\I : V \to V_h$ is the canonical interpolation in the HHO methodology, $\|\bullet\|_{a_h}$ is the norm induced by $a_h$, $|\bullet|_{\s}$ is the seminorm induced by the stabilization, and $\Pi_{\Tcal}^k$ is the $L^2$ projection onto piecewise polynomials.
Notice that \eqref{ineq:a-priori} holds under minimal regularity assumption on $u$, while
\cite{DiPietroErn2015,DiPietroDroniou2020} requires at least piecewise $H^s$ regularity of the stress $\sigma$ for some $s > 1/2$, which
is not available for boundary conditions of changing type on a straight line.
The proof of \eqref{ineq:a-priori} utilizes the arguments of \cite{ErnZanotti2020,BertrandCarstensenGraessle2021} but additionally requires a quasi-best approximation result for the stabilization in the spirit of \cite{CarstensenZhaiZhang2020}.
Duality techniques allow for $L^2$ error estimates with additional convergence rates depending on the elliptic regularity on polyhedral domains.
If $f \in H^{-1}(\Omega)^n$ and $g \in \widetilde{H}^{-1/2}(\Gamma_{\mathrm{N}})^n$ are nonsmooth, a modified HHO solver with a smoother \cite{VeeserZanottiII,VeeserZanottiIII,ErnZanotti2020,CN21a} is proposed with an oscillation-free version of \eqref{ineq:a-priori}.

The a~posteriori error analysis combines the techniques from the residual and equilibrium error estimation of \cite{BertrandCarstensenGraessle2021}. We establish
\begin{align}\label{ineq:reliability}
	C_\mathrm{rel}^{-1} (\mu_0/\mu_1)\|\sigma - \sigma_h\| &\leq \eta\nonumber\\
	& \leq C_\mathrm{eff}(\mu_1/\mu_0)(\|\sigma - \sigma_h\| + \mathrm{osc}(f,\Tcal) + \mathrm{osc}(g,\Fcal_\mathrm{N})).
\end{align}
with the error estimator
\begin{align}\label{def:eta}
	\eta^2 &\coloneqq \|h_\Tcal(f + \div_\pw \sigma_h)\|^2 + \min_{v \in V} \|\mu(\varepsilon(v) - \varepsilon_h u_h)\|^2\nonumber\\
	&\qquad + \sum_{F \in \Fcal(\Omega)} h_F\|[\sigma_h]_F \nu_F\|_{L^2(F)}^2 +
	\sum_{F \in \Fcal_\mathrm{N}} h_F\|g - \sigma_h \nu_F\|_{L^2(F)}^2.
\end{align}
All the constants $C_\mathrm{qb}$, $C_\mathrm{rel}$, $C_\mathrm{eff}$, and those throughout this paper (unless explicitly stated otherwise) do not depend on $\lambda$, $\mu$, and the mesh size.
In the computations, we chose $v = \mathcal{A} u_h$ in \eqref{def:eta} with the nodal average $\mathcal{A} u_h$ of the potential reconstruction $\mathcal{R} u_h$ of $u_h$. This choice is efficient as a non-trivial generalization to \cite{BertrandCarstensenGraessle2021} for the Poisson model problem.

\subsection{Outline}
The remaining parts of this paper are organized as follows.
\Cref{sec:discrete-problem} introduces the discrete problem with the aforementioned reconstruction operators and stabilization.
Quasi-best approximation and $L^2$ error estimates follow in \Cref{sec:a-priori}.
\Cref{sec:a-posteriori} derives 
reliable and efficient a~posteriori error estimates. Computational benchmarks
in \Cref{sec:numerical-examples} provide numerical evidence for $\lambda$-robustness.

\subsection{General notation}
Standard notation for Sobolev and Lebesgue spaces applies throughout this paper. In particular, $(\bullet, \bullet)_{L^2(\Omega)}$ denotes the $L^2$ scalar product and induces the norm $\|\bullet\| \coloneqq \|\bullet\|_{L^2(\Omega)}$ in $L^2(\Omega)$ and any product $L^2(\Omega)^n$ or $L^2(\Omega)^{n \times n}$ thereof.
The norm $\|\bullet\|_{-1}$ in the dual space $H^{-1}(\Omega)$ of $H^1_0(\Omega)$ reads
\begin{align}\label{def:dual-norm}
	\|F\|_{-1} \coloneqq \sup\nolimits_{v \in H^1_0(\Omega)\setminus\{0\}} F(v)/\|\nabla v\|.
\end{align}
For any $\Phi,\Psi \in L^2(\Omega)^{n \times n}$ and $s \in \R$, abbreviate $(\Phi, \Psi)_{\mathbb{C}^s} \coloneqq (\mathbb{C}^s \Phi, \Psi)_{L^2(\Omega)}$ and $\|\Psi\|_{\mathbb{C}^s}^2 \coloneqq (\Psi, \Psi)_{\mathbb{C}^s}$.
The symmetric and asymmetric part of the gradient $\D v$ of some function $v$ are denoted by $\varepsilon(v) \coloneqq \mathrm{sym}(\D v)$ and $\D_\mathrm{ss} v \coloneqq \mathrm{asym}(\D v)$ with the formula $\mathrm{sym}(M) \coloneqq (M + M^t)/2$ and $\mathrm{asym}(M) \coloneqq (M - M^t)/2$ for any $M \in \mathbb{R}^{n \times n}$. For Sobolev functions $v \in H^1(\Omega)^n$,
the Korn inequality
\begin{align}\label{ineq:Korn-inequality}
	\|v\|_{H^1(\Omega)} \leq C_\mathrm{K} \|\varepsilon (v)\|
\end{align}
holds  subject to different constraints to fix the rigid body motions. For $v \in V$, \eqref{ineq:Korn-inequality} is referred as the first Korn inequality with a constant $C_\mathrm{K}$ that exclusively depends on $\Omega$ and $\Gamma_{\mathrm{D}}$.
The second Korn inequality is \eqref{ineq:Korn-inequality} for any $v \in H^1(\Omega)^n$ with
\begin{align}\label{ineq:second-Korn}
	\int_\Omega v \d{x} = 0 \quad\text{and}\quad \int_\Omega \D_\mathrm{ss} v \d{x} = 0.
\end{align}
This is applied to piecewise $H^1$ vector fields, where \eqref{ineq:second-Korn} holds piecewise. Then $C_\mathrm{K}$ exclusively depends on $n$ and the shape regularity of the triangulation.
A proof of all this is provided in \cite{Brenner2004}.
The set of symmetric matrices is denoted by $\mathbb{S} \equiv \mathrm{sym} (\R^{n \times n})$.
Given two vectors $a, b \in \mathbb{R}^n$, recall the vector product $a \otimes b \coloneqq a b^t \in \R^{n \times n}$.
The deviatoric part of $M \in \mathbb{R}^{n \times n}$ is defined by $\mathrm{dev} M \coloneqq M - \tr(M) \mathrm{I}_{n \times n}/n$ with the trace $\mathrm{tr}(M) \coloneqq M : \mathrm{I}_{n \times n}$.
The notation $A \lesssim B$ means $A \leq C B$ for a generic constant $C$, $A \approx B$ abbreviates $A \lesssim B$ and $B \lesssim A$.

\section{Discrete problem}\label{sec:discrete-problem}
This section recalls the HHO methodology introduced in \cite{DiPietroErnLemaire2014,DiPietroErn2015,DiPietroDroniou2020,CicuttinErnPignet2021} for the linear elastic model problem of \Cref{sec:model}.

\subsection{Triangulation}\label{sec:triangulation}
Let $\Tcal$ be a regular triangulation into simplices with the set of sides $\Fcal$.
The set of interior sides (resp.~boundary sides) is denoted by $\mathcal{F}(\Omega)$ (resp.~$\Fcal(\partial \Omega) \coloneqq \Fcal\setminus\Fcal(\Omega)$).
We assume that the Dirichlet boundary $\Gamma_\mathrm{D}$ can be exactly resolved by the triangulation, i.e., the set $\Fcal_\mathrm{D} \coloneqq \{F \in \Fcal: F \subset \Gamma_\mathrm{D}\}$ of Dirichlet sides covers $\Gamma_\mathrm{D} = \cup \Fcal_\mathrm{D}$.
Fix the orientation of
the normal vector $\nu_F$ of an interior side $F \in \Fcal(\Omega)$ and $\nu_F \coloneqq \nu|_F$ for any boundary side $F \in \Fcal(\partial \Omega)$.
Given $F \in \Fcal(\Omega)$,
$T_\pm \in \Tcal$ denotes the unique simplex with $F \subset \partial T_\pm$ and $\nu_{T_\pm}|_F = \pm\nu_F$.
The jump $[v]_F$ of any function $v : \Omega \to \mathbb{R}$ with $v|_{T_\pm} \in W^{1,1}(\mathrm{int}(T_\pm))$ along $F \in \Fcal(\Omega)$ is defined by $[v]_F \coloneqq v|_{T_+} - v|_{T_-} \in L^1(F;\R^m)$.
If $F \in \Fcal(\partial \Omega)$, then $[v]_F \coloneqq v|_F$.
For any $T \in \Tcal$ (resp.~$F \in \Fcal$), define the simplex patch $\Omega(T) \coloneqq \mathrm{int}(\cup_{K \in \mathcal{T}, K \cap T \neq \emptyset} K)$ (resp.~side patch $\omega(F) \coloneqq \mathrm{int}(\cup_{T \in \mathcal{T}, F \in \Fcal(T)} T)$).
The set $H^1(\Tcal)$ is the space of all piecewise $H^1$ function with respect to the triangulation $\Tcal$.
The notation $\nabla_\pw$ denotes the piecewise application of the differential operator $\nabla$ (even without explicit reference to $\Tcal$).
This notation applies to the differential operators $\div$ and $\varepsilon$ as well.
In context of mesh-refining algorithms, there is an initial triangulation $\mathcal{T}_0$ with the set $\mathcal{V}_0$ (resp.~$\Fcal_0$) of vertices (resp.~sides) of $\Tcal_0$.
We assume that $\mathcal{T}_0$ satisfies the initial condition (IC) from \cite[Section 4, (a)--(b)]{Stevenson2008} to guarantee that the triangulations generated by successive refinements of $\Tcal_0$ with the newest-vertex-bisection (NVB) algorithm \cite{Maubach1995,Stevenson2008} are shape-regular.
The set of all such triangulations is denoted by $\mathbb{T}$ and $\Tcal \in \mathbb{T}$ is understood throughout the remaining parts of this paper.

\subsection{Finite element spaces}\label{sec:fem-spaces}
Given a subset $M \subset \R^n$ of diameter $h_M$, let $P_k(M)$ denote the space of polynomials on $M$ of total degree at most $k$.
For any $v \in L^1(M)$, $\Pi_M^k v \in P_k(M)$ denotes the $L^2$ projection of $v$ onto $P_k(M)$. The space of piecewise polynomials of degree at most $k$ with respect to $\mathcal{T}$ (resp.~$\Fcal$) is denoted by $P_k(\mathcal{T})$ (resp.~$P_k(\Fcal))$.
The continuous version of $P_{k+1}(\Tcal)^n$ with vanishing boundary data on $\Gamma_{\mathrm{D}}$
reads $S_\mathrm{D}^{k+1}(\Tcal) \coloneqq P_{k+1}(\Tcal)^n \cap V$.
Given $v \in L^1(\Omega)$ (resp.~$v \in L^1(\cup \Fcal)$), the $L^2$ projection $\Pi_{\Tcal}^k v$ (resp.~$\Pi_{\Fcal}^k v$) of $v$ onto $P_k(\Tcal)$ (resp.~$P_k(\Fcal)$) is $(\Pi_{\Tcal}^k v)|_T = \Pi_T^k v|_T$ in $T \in \Tcal$ (resp.~$(\Pi_{\Fcal}^k v)|_F = \Pi_F^k v|_F$ along $F \in \Fcal$).
The piecewise constant mesh-size function $h_\Tcal \in P_0(\Tcal)$ reads $h_\Tcal|_T = h_T$; $h_{\max} \coloneqq \max_{T \in \Tcal} h_T$ is the maximal simplex diameter in $\Tcal$.
For any $f \in L^2(\Omega)^n$ and $g \in L^2(\Gamma_\mathrm{N})^n$, define $\mathrm{osc}(f,\Tcal) \coloneqq \|h_\Tcal(1 - \Pi_{\Tcal}^k) f\|$ and $\mathrm{osc}(g, \Fcal_\mathrm{N})^2 \coloneqq \sum_{F \in \Fcal_\mathrm{N}} h_F\|(1 - \Pi_F^k) g\|_{L^2(F)}^2$.

\subsection{Discrete spaces}\label{sec:discrete-spaces}
Given a fixed natural number $k \geq 1$, let
\begin{align}\label{def:discrete-space}
	V_h \coloneqq P_k(\Tcal)^n \times P_k(\Fcal\setminus\Fcal_\mathrm{D})^n
\end{align}
denote the discrete ansatz space for $V$.
In this definition, $P_k(\Fcal\setminus\Fcal_\mathrm{D})^n$ is a subspace of $P_k(\Fcal)^n$ by zero extension: $v_\Fcal|_F \equiv 0$ on Dirichlet sides $F \in \Fcal_\mathrm{D}$ to model the homogeneous Dirichlet boundary condition of $v_\Fcal \in P_k(\Fcal\setminus\Fcal_\mathrm{D})^n$.
We note that the HHO method in this paper is \emph{not} well-posed for $k = 0$ and refer to \cite{BottiDiPietroGuglielmana2019} for the lowest-order case. 
For any $v_h = (v_\Tcal, v_\Fcal) \in V_h$, we abbreviate $v_T \coloneqq v_\Tcal|_T$ in a simplex $T \in \Tcal$ and $v_F \coloneqq v_\Fcal|_F$ along a side $F \in \Fcal$.
A norm in $V_h$ reads
\begin{align}\label{def:norm-h}
	\|v_h\|_h^2 \coloneqq \|\varepsilon_\pw(v_\mathcal{T})\|^2_{L^2(\Omega)} + \sum_{T \in \Tcal}\sum_{F \in \Fcal(T)} h_F^{-1}\|v_F - v_T\|_{L^2(F)}^2.
\end{align}
The interpolation operator $\I: V \to V_h$ maps $v \in V$ to $\I v \coloneqq (\Pi_\Tcal^k v, \Pi_\Fcal^k v) \in V_h$.

\subsection{Reconstructions and stabilization}
The potential reconstruction $\mathcal{R} : V_h \to P_{k+1}(\Tcal)^n$ maps $v_h = (v_\Tcal, v_\Fcal) \in V_h$ onto $\mathcal{R} v_h \in P_{k+1}(\Tcal)^n$ such that 
\begin{align}\label{def:potential-reconstruction-1}
	\begin{split}
		&\int_\Omega \varepsilon_\pw(\mathcal{R} v_h) : \varepsilon_\pw (\varphi_{k+1}) \d{x} \\
		&\qquad = -\int_\Omega v_\Tcal \cdot \div_\pw \varepsilon_\pw (\varphi_{k+1}) \d{x} + \sum_{F \in \Fcal} \int_F v_F \cdot [\varepsilon_\pw (\varphi_{k+1})]_F \nu_F \d{s}
	\end{split}
\end{align}
for any $\varphi_{k+1} \in P_{k+1}(\Tcal)^n$.
The system \eqref{def:potential-reconstruction-1} of linear equations defines $\mathcal{R} v_h$ uniquely up to the components associated with rigid-body motions. The latter are fixed by
\begin{align}\label{def:potential-reconstruction-2}
	\int_T \mathcal{R} v_h \d{x} = \int_T v_T \d{x} \text{ and } \int_T \D_\mathrm{ss} \mathcal{R} v_h \d{x} = \sum_{F \in \Fcal(T)} \int_F \mathrm{asym}(\nu_T \otimes v_F) \d{s}
\end{align}
for any $T \in \Tcal$.
The gradient reconstruction $\mathcal{G} v_h \in P_k(\Tcal)^{n \times n}$ of $v_h$ solves
\begin{align}\label{def:sym-grad-rec}
	\int_\Omega \mathcal{G} v_h : \Phi_k \d{x} = -\int_\Omega v_\Tcal \cdot \div_\pw \Phi_{k} \d{x} + \sum_{F \in \Fcal} \int_F v_F \cdot [\Phi_{k}]_F \nu_F \d{s}
\end{align}
for any $\Phi_k \in P_k(\Tcal)^{n \times n}$.
The symmetric part of $\mathcal{G} v_h$ is denoted by $\varepsilon_h v_h \coloneqq \mathrm{sym}(\mathcal{G} v_h) \in P_k(\Tcal;\mathbb{S})$.
It is shown in \cite[Section 7.2.5]{DiPietroDroniou2020} that $\varepsilon_h$ coincides with the symmetric gradient reconstruction of $v_h$ from \cite{BottiDiPietroSochala2017}, while the trace of $\varepsilon_h v_h$ is the divergence reconstruction of $v_h$ from \cite{DiPietroErn2015}.
With the difference operators
\begin{align}\label{def:difference-operator}
	\delta_{T}^k v_h \coloneqq \Pi_T^k(v_T - \mathcal{R} v_h) \in P_k(T)^n \text{ and } \delta_{TF}^k v_h \coloneqq \Pi_F^k(v_F - \mathcal{R} v_h|_T) \in P_k(F)^n
\end{align}
for any simplex $T \in \Tcal$ and side $F \in \Fcal(T)$ \cite[Section 2.1.4]{DiPietroDroniou2020},
the local stabilization $\s_T(u_h,v_h)$ of $u_h = (u_\Tcal, u_\Fcal), v_h = (v_\Tcal, v_\Fcal) \in V_h$ from \cite{DiPietroErn2015,BottiDiPietroSochala2017} reads
\begin{align}\label{def:stabilization}
	\s_T(u_h, v_h) \coloneqq \sum_{F \in \Fcal(T)} h_F^{-1} \int_F (\delta_{TF}^k u_h - \delta_{T}^k u_h) \cdot (\delta_{TF}^k v_h - \delta_{T}^k v_h) \d{s}.
\end{align}
The global weighted and non-weighted version of \eqref{def:stabilization}, namely
$$
\s(u_h, v_h) \coloneqq \sum_{T \in \Tcal} \mu|_T\s_T(u_h,v_h)\quad\text{and}\quad \widehat{\s}(u_h, v_h) \coloneqq \sum_{T \in \Tcal} \s_T(u_h,v_h)
$$
for any $u_h,v_h \in V_h$,
induce the seminorms $|\bullet|_\s \coloneqq \s(\bullet,\bullet)^{1/2}$ and $|\bullet|_{\widehat{\s}} \coloneqq \widehat{\s}(\bullet,\bullet)^{1/2}$ on $V_h$.
While the weighted version $\s$ with the Lam\'e parameter $\mu|_T > 0$ (from the isotropic elasticity tensor $\mathbb{C}$) is utilized in the numerical scheme, $\widehat{\s}$ serves only for theoretical purposes.
The reconstructions $\mathcal{R}$, $\mathcal{G}$, and the stabilization $\s$ can be computed simplex-wise and, therefore, in parallel.

\subsection{New equivalence of stabilizations}
This subsection proves the local equivalence of $\s$ and
the alternative stabilization $\widetilde{\s}$ defined by $\widetilde{\s}(u_h,v_h) = \sum_{T \in \Tcal} \widetilde{\s}_T(u_h,v_h)$ and
\begin{align}
	\label{def:alternative-stabilization}
	\widetilde{\s}_T(u_h, v_h) &\coloneqq h_T^{-2} (\delta_{T}^k u_h, \delta_{T}^k v_h)_{L^2(T)} + \sum_{F \in \Fcal(T)} h_F^{-1}(\delta_{TF}^k u_h, \delta_{TF}^k v_h)_{L^2(F)}
\end{align}
for any $T \in \Tcal$ and $u_h, v_h \in V_h$.
This stabilization $\widetilde{\s}$ was utilized in an HHO method for the Poisson equation in \cite{DiPietroDroniou2020}, but its equivalence to the classical HHO stabilization from \cite{DiPietroErnLemaire2014} was established later \cite[Theorem 4]{BertrandCarstensenGraessle2021}. \Cref{lem:equiv-stab} below extends the equivalence $\s \approx \widetilde{\s}$ from \cite{BertrandCarstensenGraessle2021} to the linear elasticity model problem.
We note that \Cref{lem:equiv-stab} holds for general polytopal meshes.

\begin{theorem}[equivalence of stabilizations]\label{lem:equiv-stab}
	Any $v_h = (v_\Tcal, v_\Fcal) \in V_h$ and $T \in \Tcal$ satisfy $\s_T(v_h,v_h) \approx \widetilde{\s}_T(v_h,v_h)$.
\end{theorem}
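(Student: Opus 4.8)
The plan is to express both stabilizations through the two polynomials
\[
\delta_T \coloneqq \Pi_T^k\mathcal{R} v_h - v_T \in P_k(T)^n, \qquad a_F \coloneqq \Pi_F^k(v_F - \mathcal{R} v_h|_F) \in P_k(F)^n,
\]
and to reduce the whole equivalence to the single estimate $h_T^{-2}\|\delta_T\|_{L^2(T)}^2 \lesssim \s_T(v_h,v_h)$. First I would note that $S_{TF}v_h = (v_T + (1-\Pi_T^k)\mathcal{R} v_h)|_F$ together with $v_T|_F, \delta_T|_F \in P_k(F)^n$ gives $\Pi_F^k(v_F - S_{TF}v_h) = a_F + \delta_T|_F$, hence
\[
\s_T(v_h,v_h) = \sum_{F\in\Fcal(T)} h_F^{-1}\|a_F + \delta_T|_F\|_{L^2(F)}^2, \qquad \widetilde{\s}_T(v_h,v_h) = h_T^{-2}\|\delta_T\|_{L^2(T)}^2 + \sum_{F\in\Fcal(T)} h_F^{-1}\|a_F\|_{L^2(F)}^2.
\]
Since $\|\delta_T|_F\|_{L^2(F)}^2 \lesssim h_F^{-1}\|\delta_T\|_{L^2(T)}^2$ for the polynomial $\delta_T$ (with $h_F \approx h_T$ by shape regularity), the triangle inequality immediately yields $\s_T(v_h,v_h) \lesssim \widetilde{\s}_T(v_h,v_h)$, and for the converse only the displayed estimate on $\|\delta_T\|_{L^2(T)}$ is missing: indeed $\|a_F\|_{L^2(F)} \le \|a_F + \delta_T|_F\|_{L^2(F)} + \|\delta_T|_F\|_{L^2(F)}$ then controls the face part of $\widetilde{\s}_T$ as well.

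For the key estimate I would first bound $\|\varepsilon(\delta_T)\|_{L^2(T)}$. With $q \coloneqq \mathcal{R} v_h - \delta_T \in P_{k+1}(T)^n$ one has $\Pi_T^k q = v_T$ and $\Pi_F^k(v_F - q|_F) = \Pi_F^k(v_F - S_{TF}v_h)$. Testing the defining relation \eqref{def:potential-reconstruction-1} with polynomials supported in $T$ and integrating by parts gives the elementwise identity $\int_T \varepsilon(\mathcal{R} v_h):\varepsilon(\varphi)\d{x} = \int_T \varepsilon(v_T):\varepsilon(\varphi)\d{x} + \sum_{F\in\Fcal(T)} \int_F (v_F - v_T|_F)\cdot\varepsilon(\varphi)\nu_T\d{s}$ for all $\varphi \in P_{k+1}(T)^n$; subtracting $\int_T\varepsilon(q):\varepsilon(\varphi)\d{x}$ (again integrated by parts) and using $\Pi_T^k q = v_T$ together with $\div\varepsilon(\varphi) \in P_{k-1}(T)^n$ to annihilate the volume term leaves
\[
\int_T \varepsilon(\delta_T):\varepsilon(\varphi)\d{x} = \sum_{F\in\Fcal(T)} \int_F \Pi_F^k(v_F - S_{TF}v_h)\cdot\varepsilon(\varphi)\nu_T\d{s} \qquad\text{for all }\varphi \in P_{k+1}(T)^n.
\]
The choice $\varphi = \delta_T$, Cauchy--Schwarz, the polynomial trace inequality $\|\varepsilon(\delta_T)\|_{L^2(F)} \lesssim h_T^{-1/2}\|\varepsilon(\delta_T)\|_{L^2(T)}$, and $h_F \approx h_T$ then give $\|\varepsilon(\delta_T)\|_{L^2(T)}^2 \lesssim \s_T(v_h,v_h)$.

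It remains to pass from $\|\varepsilon(\delta_T)\|_{L^2(T)}$ to $\|\delta_T\|_{L^2(T)}$, and this is where the second condition in \eqref{def:potential-reconstruction-2} is used. From $\int_T\mathcal{R} v_h\d{x} = \int_T v_T\d{x}$ one has $\int_T\delta_T\d{x} = 0$, and the skew-symmetric moment condition in \eqref{def:potential-reconstruction-2}, combined with the Gauss--Green identity for $\D_\mathrm{ss}$ on $T$, gives $\int_T \D_\mathrm{ss}\delta_T\d{x} = \sum_{F\in\Fcal(T)} \int_F \mathrm{asym}(\nu_T\otimes(v_F - S_{TF}v_h))\d{s}$, so that $\bigl|\int_T \D_\mathrm{ss}\delta_T\d{x}\bigr| \lesssim h_T^{(n-1)/2}\sum_{F\in\Fcal(T)}\|\Pi_F^k(v_F - S_{TF}v_h)\|_{L^2(F)}$. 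I would then split $\delta_T = \delta_T^0 + \rho_T$ with the rigid-body motion $\rho_T(x) = W(x - x_T)$, $W \coloneqq |T|^{-1}\int_T \D_\mathrm{ss}\delta_T\d{x}$ skew-symmetric and $x_T$ the centroid of $T$, so that $\int_T\delta_T^0\d{x} = 0 = \int_T\D_\mathrm{ss}\delta_T^0\d{x}$ and, since $\varepsilon(\rho_T) = \mathrm{sym}(W) = 0$, also $\varepsilon(\delta_T^0) = \varepsilon(\delta_T)$; the second Korn inequality \eqref{ineq:second-Korn} on $T$ together with a Poincar\'e inequality gives $\|\delta_T^0\|_{L^2(T)} \lesssim h_T\|\varepsilon(\delta_T)\|_{L^2(T)}$, while the explicit $\rho_T$ and the above bound give $\|\rho_T\|_{L^2(T)}^2 \lesssim h_T^2\,\s_T(v_h,v_h)$. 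Hence $h_T^{-2}\|\delta_T\|_{L^2(T)}^2 \lesssim \s_T(v_h,v_h)$, which by the first paragraph closes $\widetilde{\s}_T(v_h,v_h) \lesssim \s_T(v_h,v_h)$ and completes $\s_T(v_h,v_h) \approx \widetilde{\s}_T(v_h,v_h)$.

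I expect the main obstacle to be precisely this last passage: in contrast to the scalar HHO stabilization of \cite{BertrandCarstensenGraessle2021}, where $\|\delta_T\|_{L^2(T)}$ follows from $\|\nabla\delta_T\|_{L^2(T)}$ by a mean-value argument, here $\mathcal{R} v_h$ is determined only up to rigid-body motions, so the rotation of $\delta_T$ --- encoded in $\int_T\D_\mathrm{ss}\delta_T$ --- must be identified through \eqref{def:potential-reconstruction-2} and removed via the second Korn inequality, and the delicate point is to make that term come out controlled by the face residuals $\Pi_F^k(v_F - S_{TF}v_h)$ rather than by $\delta_T$ itself (which would be circular). All polynomial inverse and trace inequalities and the relation $h_F \approx h_T$ rest on shape regularity; since these have polytopal counterparts, the same argument also yields the version of \Cref{lem:equiv-stab} on general polytopal meshes noted above.
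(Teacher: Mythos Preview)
Your proof is correct and follows essentially the same approach as the paper: the easy direction via triangle and discrete trace inequalities, and for the converse the reduction to bounding $h_T^{-1}\|\delta_T\|_{L^2(T)}$ (the paper's $\varphi_k$), carried out by splitting off the rigid-body part via the moment conditions in \eqref{def:potential-reconstruction-2} and bounding $\|\varepsilon(\delta_T)\|_{L^2(T)}$ through the definition of $\mathcal{R}$ and integration by parts. The only cosmetic difference is that you package the $\|\varepsilon(\delta_T)\|$ estimate through the auxiliary polynomial $q=\mathcal{R} v_h-\delta_T$, whereas the paper expands $\|\varepsilon(\varphi_k)\|_{L^2(T)}^2$ directly; both routes yield the same face-residual identity and the same conclusion.
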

\begin{proof}
	The assertion $\lesssim$ follows immediately from the triangle and the discrete trace inequality. The remaining parts of this proof are therefore devoted to the reverse direction $\gtrsim$.
	A triangle and a discrete trace inequality imply
	\begin{align}\label{ineq:proof-equiv-stab-split}
		\|\delta_{TF}^k v_h\|_{L^2(F)} \lesssim \|\delta_{TF}^k v_h - \delta_{T}^k v_h\|_{L^2(F)} + h_T^{-1/2}\|\delta_{T}^k v_h\|_{L^2(T)}
	\end{align}
	for any $F \in \Fcal(T)$. Therefore, it remains to prove $h_T^{-1}\|\delta_{T}^k v_h\|_{L^2(T)} \lesssim \s_T(u_h,v_h)^{1/2}$.
	Let $x_T$ denote the midpoint of $T$.
	Define the rigid-body motion $\varphi_{\mathrm{RM}} \coloneqq \Pi_T^0 \delta_T^k v_h + (\Pi_T^0 \D_\mathrm{ss} \delta_T^k v_h) (x - x_T)$ as in \cite{DiPietroErn2015}.
	Since $\Pi_T^0 \delta_T^k v_h = 0$ from \eqref{def:potential-reconstruction-2}, we infer
	\begin{align}\label{ineq:proof-equiv-stab-L2-RM}
		h_T^{-1}\|\varphi_\mathrm{RM}\|_{L^2(T)} \approx \|\Pi_T^0 \D_\mathrm{ss} \delta_T^k v_h\|_{L^2(T)} = |T|^{-1/2}\Big|\int_T \D_\mathrm{ss} \delta_T^k v_h \d{x}\Big|.
	\end{align}
	The outer normal vector $\nu_T$ is constant along any $F \in \Fcal(T)$ and hence,
	the convention \eqref{def:potential-reconstruction-2} in the definition of $\mathcal{R}$ and an integration by parts show
	\begin{align*}
		\sum_{F \in \Fcal(T)} \int_F \mathrm{asym}(\nu_T \otimes \delta_{TF}^k v_h) \d{s} = \sum_{F \in \Fcal(T)} \int_F \mathrm{asym}(\nu_T \otimes (v_F - \mathcal{R} v_h|_T)) \d{s} = 0.
	\end{align*}
	In combination with another integration by parts, we obtain
	\begin{align*}
		\int_T \D_\mathrm{ss} \delta_{T}^k v_h \d{x} &= \sum_{F \in \Fcal(T)} \int_F \mathrm{asym}(\nu_T \otimes \delta_{T}^k v_h) \d{s}\\
		& = \sum_{F \in \Fcal(T)} \int_F \mathrm{asym}(\nu_T \otimes (\delta_{T}^k v_h - \delta_{TF}^k v_h)) \d{s}.
	\end{align*}
	This, \eqref{ineq:proof-equiv-stab-L2-RM}, and a Cauchy inequality reveal
	\begin{align*}
		h_T^{-1}\|\varphi_\mathrm{RM}\|_{L^2(T)} \lesssim \sum_{F \in \Fcal(T)} h_F^{-1/2} \|\delta_{TF}^k v_h - \delta_{T}^k v_h\|_{L^2(F)} \lesssim \s_T(v_h,v_h)^{1/2}.
	\end{align*}
	Since $h_T^{-1}\|\delta_{T}^k v_h - \varphi_{\mathrm{RM}}\|_{L^2(T)} \lesssim \|\varepsilon (\delta_{T}^k v_h)\|_{L^2(T)}$ from a Poincar\'e and the (second) Korn inequality \eqref{ineq:Korn-inequality},
	\begin{align}\label{ineq:proof-equiv-stab-split-RM}
		h_T^{-1}\|\delta_{T}^k v_h\|_{L^2(T)} \lesssim \s_T(v_h,v_h)^{1/2} + \|\varepsilon (\delta_{T}^k v_h)\|_{L^2(T)}.
	\end{align}
	The proof of $\|\varepsilon (\delta_{T}^k v_h)\|_{L^2(T)}^2 \lesssim s_T(v_h,v_h)$ below generalizes that of \cite[Theorem 4]{BertrandCarstensenGraessle2021}.
	An integration by parts provides
	\begin{align}\label{ineq:proof-equiv-stab-L2-vol-1}
		\|\varepsilon (\delta_{T}^k v_h)\|^2_{L^2(T)} &= -(\delta_{T}^k v_h, \div\, \varepsilon (\delta_{T}^k v_h))_{L^2(T)} + (\delta_{T}^k v_h, \varepsilon (\delta_{T}^k v_h) \nu_T)_{L^2(\partial T)}.
	\end{align}
	Since $\div\, \varepsilon (\delta_{T}^k v_h) \in P_{k-2}(T)^n \subset P_k(T)^n$ with the convention $P_{-1}(T) = \{0\}$, another integration by parts and the definition \eqref{def:potential-reconstruction-1} of $\mathcal{R}$ result in
	\begin{align*}
		\begin{split}
			&-(\Pi_T^k \mathcal{R} v_h, \div\, \varepsilon (\delta_{T}^k v_h))_{L^2(T)} = -(\mathcal{R} v_h, \div\, \varepsilon (\delta_{T}^k v_h))_{L^2(T)}\\
			&\qquad = -(v_T, \div\, \varepsilon (\delta_{T}^k v_h))_{L^2(T)} + \sum_{F \in \Fcal(T)} (v_F - \mathcal{R} v_h|_T, \varepsilon(\delta_{T}^k v_h) \nu_T)_{L^2(F)}.
		\end{split}
	\end{align*}
	Recall $\delta_{T}^k v_h = \Pi_T^k(v_T - \mathcal{R} v_h)$ to rewrite this as
	\begin{align}\label{ineq:proof-equiv-stab-L2-vol-2}
		(\delta_{T}^k v_h, \div\, \varepsilon (\delta_{T}^k v_h))_{L^2(T)} = \sum_{F \in \Fcal(T)} (\delta_{TF}^k v_h, \varepsilon(\delta_{T}^k v_h) \nu_T)_{L^2(F)}
	\end{align}
	with $\varepsilon(\delta_{T}^k v_h) \nu_T \in P_k(F)$ along any $F \in \Fcal(T)$ in the last step.
	The combination of \eqref{ineq:proof-equiv-stab-L2-vol-1}--\eqref{ineq:proof-equiv-stab-L2-vol-2} with a Cauchy and discrete trace inequality reveals
	\begin{align}
		\|\varepsilon (\delta_{T}^k v_h)\|^2_{L^2(T)} \lesssim \sum_{F \in \Fcal(T)} h_F^{-1/2}\|\delta_{TF}^k v_h - \delta_{T}^k v_h\|_{L^2(F)} \|\varepsilon(\delta_{T}^k v_h)\|_{L^2(T)}.
	\end{align}
	Hence, $\|\varepsilon (\delta_{T}^k v_h)\|_{L^2(T)}^{2} \lesssim s_T(v_h,v_h)$, \eqref{ineq:proof-equiv-stab-split}, and \eqref{ineq:proof-equiv-stab-split-RM} conclude the proof.
\end{proof}

While \Cref{lem:equiv-stab} also holds for general polytopal meshes, it provides a clear description of the kernel of $|\bullet|_\s$ on triangulations into simplices.
Define the set $\mathrm{CR}^{k+1}_\mathrm{D}(\Tcal) \coloneqq \{v_{k+1} \in P_{k+1}(\Tcal)^n : \Pi_F^k [v_{k+1}]_F = 0 \text{ for any } F \in \Fcal \setminus \Fcal_\mathrm{N}\}$ of Crouzeix-Raviart functions.
The interpolation $\I : V \to V_h$ can be extended to $\I : V + \mathrm{CR}_\mathrm{D}^{k+1}(\Tcal) \to V_h$ because $\Pi_F^k v_\mathrm{CR}$ is uniquely defined for Crouzeix-Raviart functions.

\begin{corollary}[kernel of $|\bullet|_\s$]\label{cor:kernel-stabilization}
	Any given $v_h \in V_h$ satisfies $|v_h|_\s = 0$ if and only if $v_h = \I v_\mathrm{CR}$ holds for some $v_\mathrm{CR} \in \mathrm{CR}^{k+1}_\mathrm{D}(\Tcal)$.
\end{corollary}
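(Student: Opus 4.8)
The plan is to pass to the equivalent seminorm $\widetilde{\s}$ of \Cref{lem:equiv-stab} and to read off both implications from the sum-of-squares structure of $\widetilde{\s}_T$. Since $\mu|_T>0$ and, by \eqref{def:alternative-stabilization}, $\widetilde{\s}_T(v_h,v_h)$ is a sum of squared $L^2$ norms, \Cref{lem:equiv-stab} shows that $|v_h|_\s=0$ is equivalent to the two conditions (P1) $v_T=\Pi_T^k\mathcal{R}v_h$ in every $T\in\Tcal$ and (P2) $v_F=\Pi_F^k(\mathcal{R}v_h|_T)$ on every $F\in\Fcal(T)$ and every $T\in\Tcal$; here the outer $L^2$ projections are dropped because $v_T\in P_k(T)^n$ and $v_F\in P_k(F)^n$.

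For the implication ``$|v_h|_\s=0\Rightarrow v_h=\I v_\mathrm{CR}$ for some $v_\mathrm{CR}$'', I would set $v_\mathrm{CR}\coloneqq\mathcal{R}v_h\in P_{k+1}(\Tcal)^n$. For an interior side $F$ with neighbours $T_\pm$, (P2) gives $\Pi_F^k(\mathcal{R}v_h|_{T_+})=v_F=\Pi_F^k(\mathcal{R}v_h|_{T_-})$, hence $\Pi_F^k[v_\mathrm{CR}]_F=0$; for $F\in\Fcal_\mathrm{D}$, the convention $v_F\equiv0$ and (P2) give $\Pi_F^k[v_\mathrm{CR}]_F=\Pi_F^k(\mathcal{R}v_h|_T)=0$. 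Thus $v_\mathrm{CR}\in\mathrm{CR}^{k+1}_\mathrm{D}(\Tcal)$, and (P1)--(P2) state precisely $v_\Tcal=\Pi_\Tcal^k v_\mathrm{CR}$ and $v_F=\Pi_F^k v_\mathrm{CR}$ (the latter single-valued for Crouzeix--Raviart functions), i.e., $v_h=\I v_\mathrm{CR}$.

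The converse is the substantial part. Given $v_h=\I v_\mathrm{CR}$ with $v_\mathrm{CR}\in\mathrm{CR}^{k+1}_\mathrm{D}(\Tcal)$, I would first prove the projection property $\mathcal{R}(\I v_\mathrm{CR})=v_\mathrm{CR}$. Because $\mathcal{R}$ is computed simplex-wise and $\mathcal{R}v_h|_T$ depends only on the local data $(v_T,(v_F)_{F\in\Fcal(T)})$, it suffices to show that for $w\in P_{k+1}(T)^n$ the local solution of \eqref{def:potential-reconstruction-1}--\eqref{def:potential-reconstruction-2} with data $(\Pi_T^k w,(\Pi_F^k w)_{F\in\Fcal(T)})$ equals $w$. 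Testing the local form of \eqref{def:potential-reconstruction-1} with $\varphi_{k+1}\in P_{k+1}(T)^n$, the volume term pairs $\Pi_T^k w$ with $\div\varepsilon(\varphi_{k+1})\in P_{k-1}(T)^n$ and the boundary terms pair $\Pi_F^k w$ with $\varepsilon(\varphi_{k+1})\nu_T\in P_k(F)^n$, so all projections may be dropped, and an element-wise integration by parts rewrites the right-hand side as $\int_T\varepsilon(w):\varepsilon(\varphi_{k+1})\d{x}$. Hence $\varepsilon$ of the difference vanishes on $T$, so the difference is a rigid-body motion; it is zero because \eqref{def:potential-reconstruction-2} prescribes its mean and the mean of its $\D_\mathrm{ss}$ to coincide with those of $w$ (dropping low-order projections and using $\int_T\D_\mathrm{ss}w=\sum_{F\in\Fcal(T)}\int_F\mathrm{asym}(\nu_T\otimes w)\d{s}$), and a rigid-body motion is determined by these two averages. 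Applying this with $w=v_\mathrm{CR}|_T$ — whose local $\I$-data is $(\Pi_T^k(v_\mathrm{CR}|_T),(\Pi_F^k(v_\mathrm{CR}|_T))_{F\in\Fcal(T)})$, since $\Pi_F^k[v_\mathrm{CR}]_F=0$ makes $\Pi_F^k v_\mathrm{CR}=\Pi_F^k(v_\mathrm{CR}|_T)$ — gives $\mathcal{R}v_h|_T=v_\mathrm{CR}|_T$. Consequently $\Pi_T^k(v_T-\mathcal{R}v_h)=\Pi_T^k(\Pi_T^k v_\mathrm{CR}-v_\mathrm{CR})=0$ and $\Pi_F^k(v_F-\mathcal{R}v_h|_T)=\Pi_F^k(\Pi_F^k(v_\mathrm{CR}|_T)-v_\mathrm{CR}|_T)=0$, so (P1)--(P2) hold and $|v_h|_\s=0$.

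I expect the main obstacle to be the projection property $\mathcal{R}\circ\I=\mathrm{id}$ on $\mathrm{CR}^{k+1}_\mathrm{D}(\Tcal)$, and within its proof the bookkeeping of the rigid-body normalisation \eqref{def:potential-reconstruction-2} together with the identification of the single-valued face projection of a Crouzeix--Raviart function with its one-sided trace projection; everything else is a routine consequence of \Cref{lem:equiv-stab} and of the polynomial degrees involved.
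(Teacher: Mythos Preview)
Your proposal is correct and follows essentially the same route as the paper: both directions hinge on the equivalence $\s\approx\widetilde{\s}$ of \Cref{lem:equiv-stab}, the forward implication takes $v_\mathrm{CR}\coloneqq\mathcal{R}v_h$, and the converse rests on the projection identity $\mathcal{R}\I v_\mathrm{CR}=v_\mathrm{CR}$, which the paper records as \eqref{eq:RI=Id} with the same one-line justification (piecewise integration by parts in \eqref{def:potential-reconstruction-1}--\eqref{def:potential-reconstruction-2}) that you spell out in detail.
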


\begin{proof}
	If $v_h \in V_h$ with $|v_h|_\s = 0$, then \Cref{lem:equiv-stab} implies that $\Pi_{\Tcal}^k \mathcal{R} v_h = v_\Tcal$ and $\Pi_F^k \mathcal{R} v_h|_T = v_F$ for any $T \in \Tcal$ and $F \in \Fcal(T)$.
	This and the convention $v_F \equiv 0$ on $F \in \Fcal_\mathrm{D}$ shows $\Pi_F^k [\mathcal{R} v_h]_F = 0$ for any $F \in \Fcal\setminus\Fcal_\mathrm{N}$ and so, $\mathcal{R} v_h \in \mathrm{CR}_\mathrm{D}^{k+1}(\Tcal)$ with $\I \mathcal{R} v_h = v_h$.
	On the other hand, let $v_\mathrm{CR} \in \mathrm{CR}^{k+1}_\mathrm{D}(\Tcal)$ be given.
	A piecewise integration by parts proves that $v_\mathrm{CR} \in P_{k+1}(\Tcal)^n$ satisfies \eqref{def:potential-reconstruction-1}--\eqref{def:potential-reconstruction-2} with $v_\Tcal \coloneqq \Pi_\Tcal^k v_\mathrm{CR}$ and $v_F \coloneqq \Pi_F^k v_\mathrm{CR}$ for any $F \in \Fcal$, whence
	\begin{align}\label{eq:RI=Id}
		\mathcal{R} \I v_\mathrm{CR} = v_\mathrm{CR}.
	\end{align}
	This and \Cref{lem:equiv-stab} conclude the proof of $|\mathrm{I} v_\mathrm{CR}|_\s = 0$.
\end{proof}

\subsection{Elementary properties}
The following results state the characteristic properties of reconstruction operators in the HHO methodology.
\begin{lemma}[commuting diagram]\label{lem:commuting-diagram}
	Any $v \in V$ and $s \in \mathbb{R}$ satisfy
	\begin{align*}
		\Pi_\Tcal^k \varepsilon (v) = \varepsilon_h \I v \quad\text{and}\quad \Pi_\Tcal^k \mathbb{C}^s \varepsilon (v) = \mathbb{C}^s \varepsilon_h \I v.
	\end{align*}
\end{lemma}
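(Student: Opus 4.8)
The plan is to prove the first identity at the level of the full gradient reconstruction $\mathcal{G}$ and then symmetrise, since $\varepsilon_h = \mathrm{sym}\,\mathcal{G}$ and $\mathrm{sym}$, acting componentwise, commutes with the piecewise $L^2$ projection and maps $P_k(\Tcal)^{n\times n}$ onto $P_k(\Tcal;\mathbb{S})$. So the first step is to establish $\mathcal{G}\I v = \Pi_\Tcal^k \D v$ for every $v \in V$. Fix a test field $\Phi_k \in P_k(\Tcal)^{n\times n}$ and insert $v_h = \I v = (\Pi_\Tcal^k v,\,(\Pi_F^k v)_{F\in\Fcal})$ into the defining relation \eqref{def:sym-grad-rec}. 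On the right-hand side, $\div_\pw\Phi_k$ is piecewise of degree at most $k-1$, hence $\int_\Omega(v-\Pi_\Tcal^k v)\cdot\div_\pw\Phi_k\d x = 0$, and for each $F\in\Fcal$ the trace $[\Phi_k]_F\nu_F$ lies in $P_k(F)^n$, hence $\int_F(v-\Pi_F^k v)\cdot[\Phi_k]_F\nu_F\d s = 0$; therefore the projections in front of $v$ may be dropped and
\begin{align*}
	\int_\Omega \mathcal{G}\I v : \Phi_k \d x = -\int_\Omega v\cdot\div_\pw\Phi_k\d x + \sum_{F\in\Fcal}\int_F v\cdot[\Phi_k]_F\nu_F\d s.
\end{align*}

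The second step is a piecewise integration by parts of $\int_\Omega \D v : \Phi_k\d x = \sum_{T\in\Tcal}\int_T \D v:\Phi_k\d x$: on each simplex this equals $-\int_T v\cdot\div\Phi_k\d x + \int_{\partial T}v\cdot\Phi_k\nu_T\d s$. Summing over $T\in\Tcal$ and reorganising the facet contributions, an interior side $F$ with neighbours $T_\pm$ contributes $\int_F v\cdot(\Phi_k|_{T_+}-\Phi_k|_{T_-})\nu_F\d s = \int_F v\cdot[\Phi_k]_F\nu_F\d s$ because $v\in H^1(\Omega)^n$ has a single-valued trace on $F$; a Dirichlet side contributes nothing since $v=0$ on $\Gamma_\mathrm{D}$ (matching the zero-extension convention $v_F\equiv 0$ on $\Fcal_\mathrm{D}$); and a Neumann side contributes $\int_F v\cdot\Phi_k\nu_F\d s = \int_F v\cdot[\Phi_k]_F\nu_F\d s$ by the jump convention for boundary sides. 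Hence the right-hand side above equals $\int_\Omega \D v:\Phi_k\d x$ for all $\Phi_k\in P_k(\Tcal)^{n\times n}$, so $\mathcal{G}\I v = \Pi_\Tcal^k\D v$; applying $\mathrm{sym}$ gives $\varepsilon_h\I v = \mathrm{sym}(\Pi_\Tcal^k\D v) = \Pi_\Tcal^k\varepsilon(v)$.

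For the second identity, recall that $\mathbb{C}$ is a symmetric positive definite endomorphism of $\mathbb{S}$ (indeed $\mathbb{C} E:E = \lambda\,\tr(E)^2 + 2\mu|E|^2 \ge 2\mu_0|E|^2$), so $\mathbb{C}^s$ is defined for every $s\in\R$ by the spectral calculus and, since $\lambda,\mu$ are piecewise constant with respect to $\Tcal$, is $\Tcal$-piecewise constant. A constant self-adjoint endomorphism of $\mathbb{S}$ commutes with the elementwise $L^2$ projection: for $E\in L^2(T;\mathbb{S})$ and $Q\in P_k(T;\mathbb{S})$ one has $\int_T(\mathbb{C}^s|_T E - \mathbb{C}^s|_T\Pi_T^k E):Q\d x = \int_T(E-\Pi_T^k E):\mathbb{C}^s|_T Q\d x = 0$ because $\mathbb{C}^s|_T Q\in P_k(T;\mathbb{S})$, hence $\Pi_T^k\mathbb{C}^s = \mathbb{C}^s\Pi_T^k$. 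Consequently $\Pi_\Tcal^k\mathbb{C}^s\varepsilon(v) = \mathbb{C}^s\Pi_\Tcal^k\varepsilon(v) = \mathbb{C}^s\varepsilon_h\I v$ by the first identity.

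I do not expect a genuine obstacle here: this is the standard HHO commuting-property computation. The only points requiring care are the bookkeeping in the piecewise integration by parts — matching each facet contribution to the jump term $[\Phi_k]_F\nu_F$ and checking that the homogeneous Dirichlet condition on $v$ together with the zero-extension convention on $\Fcal_\mathrm{D}$ collapses the boundary sum to $\sum_{F\in\Fcal}\int_F v\cdot[\Phi_k]_F\nu_F\d s$ — and the harmless remark that the coefficient partition of $\lambda,\mu$ is aligned with $\Tcal$, so that $\mathbb{C}^s$ is $\Tcal$-piecewise constant and commutes with $\Pi_\Tcal^k$.
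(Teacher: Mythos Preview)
Your proof is correct and follows exactly the approach the paper indicates: the paper simply cites $\Pi_\Tcal^k\D v=\mathcal{G}\I v$ from \cite[Eq.~(4.40)]{DiPietroDroniou2020} and notes that the second identity is immediate for piecewise constant $\lambda,\mu$, while you spell out this standard HHO commuting computation (dropping the projections against $\div_\pw\Phi_k$ and $[\Phi_k]_F\nu_F$, piecewise integration by parts, then symmetrising and using that $\mathbb{C}^s$ is $\Tcal$-piecewise constant). There is no gap.
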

\begin{proof} 
	The first identity follows from $\Pi_\Tcal^k \D v = \mathcal{G} \I v$ \cite[Eq.~(4.40)]{DiPietroDroniou2020} and is stated in \cite[Eq.~(18)]{BottiDiPietroSochala2017}. The second identity is an immediate consequence of the first for piecewise constant parameters $\lambda, \mu$.
\end{proof}
\begin{lemma}[best approximation of $\mathcal{R} \circ \I$]\label{lem:quasi-opt-R-I}
	Any $v \in V$ satisfies
	\begin{align}\label{ineq:ba-RI}
		\|\D_\pw(v - \mathcal{R} \I v)\| \lesssim \|\varepsilon_\pw(v - \mathcal{R} \I v)\| = \min_{v_{k+1} \in P_{k+1}(\Tcal)^n} \|\varepsilon_\pw(v - v_{k+1})\|.
	\end{align}
\end{lemma}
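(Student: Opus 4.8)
The plan is to establish the chain in \eqref{ineq:ba-RI} from right to left. The middle equality is the key point: I claim that $\mathcal{R}\I v$ is the $\varepsilon_\pw$-best approximation of $v$ in $P_{k+1}(\Tcal)^n$ up to rigid body motions, and since the minimization on the right-hand side is invariant under adding elementwise rigid body motions (which lie in the kernel of $\varepsilon_\pw$), this will give equality. To see the best-approximation property I would test the defining relation \eqref{def:potential-reconstruction-1} with $\varphi_{k+1} = \mathcal{R}\I v - v_{k+1}$ for an arbitrary $v_{k+1}\in P_{k+1}(\Tcal)^n$; using the commuting-diagram relation from \Cref{lem:commuting-diagram} (or directly the definition of $\I$) together with an elementwise integration by parts on the right-hand side, the terms involving $v_\Tcal = \Pi_\Tcal^k v$ and $v_F = \Pi_F^k v$ should reassemble into $\int_\Omega \varepsilon_\pw(v):\varepsilon_\pw(\mathcal{R}\I v - v_{k+1})\d x$. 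This yields the Galerkin orthogonality $\int_\Omega \varepsilon_\pw(\mathcal{R}\I v - v):\varepsilon_\pw(\varphi_{k+1})\d x = 0$ for all $\varphi_{k+1}\in P_{k+1}(\Tcal)^n$, which is exactly the statement that $\varepsilon_\pw(\mathcal{R}\I v)$ is the $L^2$-orthogonal projection of $\varepsilon_\pw(v)$ onto $\varepsilon_\pw(P_{k+1}(\Tcal)^n)$, hence the claimed minimality.

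For the left inequality $\|\D_\pw(v-\mathcal{R}\I v)\| \lesssim \|\varepsilon_\pw(v-\mathcal{R}\I v)\|$, the tool is the second Korn inequality \eqref{ineq:second-Korn}, applied elementwise. The normalization conditions \eqref{def:potential-reconstruction-2} in the definition of $\mathcal{R}$ are designed precisely so that the argument $w := v - \mathcal{R}\I v$ satisfies, on each simplex $T$, both $\int_T w\d x = 0$ and $\int_T \D_\mathrm{ss} w\d x = 0$: the first follows from $\int_T \mathcal{R}\I v\d x = \int_T (\I v)_T\d x = \int_T \Pi_T^k v\d x = \int_T v\d x$, and the second from the second relation in \eqref{def:potential-reconstruction-2} together with an integration by parts identity $\int_T \D_\mathrm{ss} v\d x = \sum_{F\in\Fcal(T)}\int_F \mathrm{asym}(\nu_T\otimes v)\d s$ and the fact that $(\I v)_F = \Pi_F^k v$ with $\mathrm{asym}(\nu_T\otimes\cdot)$ being affine so the $L^2$ projection does not change the side integral against the (constant) normal. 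Then \eqref{ineq:second-Korn} gives $\|\D w\|_{L^2(T)} \le C_\mathrm{K}\|\varepsilon(w)\|_{L^2(T)}$ with $C_\mathrm{K}$ depending only on $n$ and shape regularity, and summing over $T\in\Tcal$ completes the estimate. (The scale invariance of the Korn constant under dilation is what makes this mesh-size-independent.)

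The main obstacle I anticipate is the verification that $w = v - \mathcal{R}\I v$ genuinely satisfies the second Korn normalization $\int_T \D_\mathrm{ss} w\d x = 0$ elementwise; this requires carefully matching the side-integral convention in \eqref{def:potential-reconstruction-2} against the integration-by-parts formula for $\int_T \D_\mathrm{ss} v\d x$ and using that $\mathrm{asym}(\nu_T\otimes v_F)$ integrated over a face only sees $\Pi_F^1 v$, hence $\Pi_F^k v = v_F$ suffices for $k\ge1$. Once both normalizations are in place the rest is a routine invocation of Korn together with the Galerkin orthogonality argument; no delicate estimates beyond those are needed.
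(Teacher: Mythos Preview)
Your proposal is correct and follows essentially the same route as the paper: verify the two elementwise normalizations $\int_T (v-\mathcal{R}\I v)\d x = 0$ and $\int_T \D_\mathrm{ss}(v-\mathcal{R}\I v)\d x = 0$ from \eqref{def:potential-reconstruction-2} to invoke the second Korn inequality, and use the $L^2$ orthogonality $\varepsilon_\pw(v-\mathcal{R}\I v)\perp\varepsilon_\pw(P_{k+1}(\Tcal)^n)$ for the best-approximation equality. The paper simply cites the orthogonality from \cite[Eq.~(19)]{DiPietroErn2015}, whereas you sketch its derivation from \eqref{def:potential-reconstruction-1} via elementwise integration by parts; both are the same argument. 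One cosmetic remark: the face integrand $\mathrm{asym}(\nu_T\otimes v)$ is \emph{linear} (not merely affine) in $v$ with $\nu_T$ constant on $F$, so the matching of $\int_F\mathrm{asym}(\nu_T\otimes v)\d s$ with $\int_F\mathrm{asym}(\nu_T\otimes \Pi_F^k v)\d s$ already holds for $k\geq 0$, not just $k\geq 1$.
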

\begin{proof}
	From \eqref{def:potential-reconstruction-2}, we infer for any $v \in V$ and $T \in \Tcal$ that
	\begin{align*}
		\int_T (v - \mathcal{R} \I v) \d{x} = 0 \quad\text{and}\quad \int_T \D_\mathrm{ss}(v - \mathcal{R} \I v) \d{x} = 0.
	\end{align*}
	Hence, the (second) Korn inequality \eqref{ineq:Korn-inequality} proves the inequality in \eqref{ineq:ba-RI}. The equality in \eqref{ineq:ba-RI} follows from the $L^2$ orthogonality $\varepsilon_\pw(v - \mathcal{R} \I v) \perp \varepsilon_\pw (P_{k+1}(\Tcal)^n)$ \cite[Eq.~(19)]{DiPietroErn2015}.
\end{proof}
A straightforward consequence of \Cref{lem:commuting-diagram}--\ref{lem:quasi-opt-R-I} is the following.
\begin{remark}[$\mathcal{\varepsilon}_h \circ \I = \varepsilon$ in $S^{k+1}_\mathrm{D}(\Tcal)$]\label{rem:RI=Id}
	If $v \in S^{k+1}_\mathrm{D}(\Tcal)$, then $\varepsilon (v) \in \varepsilon(P_{k+1}(\Tcal)^n) \subset P_k(\Tcal)^{n \times n}$.
	\Cref{lem:commuting-diagram}--\ref{lem:quasi-opt-R-I} imply $\varepsilon(v) = \varepsilon_h \I v = \varepsilon_\pw(\mathcal{R} \I v)$.
\end{remark}
Another implication of \Cref{lem:equiv-stab} is the following bound.

\begin{lemma}[$\varepsilon_\pw \circ \mathcal{R}$ vs $\varepsilon_h$]\label{lem:epsR-epsh}
	Any $v_h \in V_h$ and $T \in \Tcal$ satisfy
	\begin{align*}
		\|\varepsilon(\mathcal{R} v_h) - \varepsilon_h v_h\|_{L^2(T)}^2 \lesssim \s_T(v_h,v_h).
	\end{align*}
\end{lemma}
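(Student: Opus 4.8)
The plan is to test the difference $\varepsilon_\pw(\mathcal{R} v_h) - \varepsilon_h v_h \in P_k(\Tcal)^{n\times n}$ against itself elementwise and use the defining relations \eqref{def:potential-reconstruction-1} and \eqref{def:sym-grad-rec} to produce a cancellation. Fix $T \in \Tcal$ and abbreviate $\Psi_k \coloneqq \bigl(\varepsilon_\pw(\mathcal{R} v_h) - \varepsilon_h v_h\bigr)|_T \in P_k(T;\mathbb{S})$. I would first observe that, since $\mathcal{G} v_h$ is tested in \eqref{def:sym-grad-rec} against arbitrary $\Phi_k \in P_k(\Tcal)^{n\times n}$ and $\varepsilon_h v_h = \mathrm{sym}(\mathcal{G} v_h)$, one has $(\varepsilon_h v_h, \Psi_k)_{L^2(T)} = (\mathcal{G} v_h, \Psi_k)_{L^2(T)}$ for the symmetric $\Psi_k$; combining with \eqref{def:sym-grad-rec} applied on the single simplex $T$ (the local version of that identity, valid because both reconstructions are computed simplex-wise) gives
\begin{align*}
	(\varepsilon_h v_h, \Psi_k)_{L^2(T)} = -(v_T, \div \Psi_k)_{L^2(T)} + \sum_{F \in \Fcal(T)} (v_F, \Psi_k \nu_T)_{L^2(F)}.
\end{align*}
On the other hand, since $\Psi_k + \varepsilon_h v_h \in P_k(T;\mathbb{S})$ but $\varepsilon_\pw(\mathcal{R} v_h)$ is tested against gradients $\varepsilon_\pw(\varphi_{k+1})$ only, I cannot directly use \eqref{def:potential-reconstruction-1}; instead I integrate by parts: $(\varepsilon_\pw(\mathcal{R} v_h), \Psi_k)_{L^2(T)} = -(\mathcal{R} v_h, \div \Psi_k)_{L^2(T)} + (\mathcal{R} v_h, \Psi_k \nu_T)_{L^2(\partial T)}$, where $\div \Psi_k \in P_{k-1}(T)^n$.

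Subtracting the two expressions makes the volume terms combine into $-(v_T - \mathcal{R} v_h, \div \Psi_k)_{L^2(T)} = -(\Pi_T^k(v_T - \mathcal{R} v_h), \div \Psi_k)_{L^2(T)}$ (since $\div \Psi_k$ has degree $\le k-1 \le k$), and the boundary terms combine into $\sum_{F \in \Fcal(T)}(v_F - \mathcal{R} v_h|_T, \Psi_k \nu_T)_{L^2(F)} = \sum_{F \in \Fcal(T)}(\Pi_F^k(v_F - \mathcal{R} v_h|_T), \Psi_k \nu_T)_{L^2(F)}$ (since $\Psi_k\nu_T \in P_k(F)^n$). Thus
\begin{align*}
	\|\Psi_k\|_{L^2(T)}^2 = -(\Pi_T^k(v_T - \mathcal{R} v_h), \div \Psi_k)_{L^2(T)} + \sum_{F \in \Fcal(T)} (\Pi_F^k(v_F - \mathcal{R} v_h|_T), \Psi_k \nu_T)_{L^2(F)}.
\end{align*}
Now I apply Cauchy--Schwarz on each term followed by the inverse estimate $\|\div \Psi_k\|_{L^2(T)} \lesssim h_T^{-1}\|\Psi_k\|_{L^2(T)}$ and the discrete trace inequality $\|\Psi_k \nu_T\|_{L^2(F)} \lesssim h_F^{-1/2}\|\Psi_k\|_{L^2(T)}$. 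After dividing by $\|\Psi_k\|_{L^2(T)}$ this yields
\begin{align*}
	\|\Psi_k\|_{L^2(T)} \lesssim h_T^{-1}\|\Pi_T^k(v_T - \mathcal{R} v_h)\|_{L^2(T)} + \sum_{F \in \Fcal(T)} h_F^{-1/2}\|\Pi_F^k(v_F - \mathcal{R} v_h|_T)\|_{L^2(F)},
\end{align*}
and since $h_F \approx h_T$, the right-hand side is, up to the generic constant, exactly $\widetilde{\s}_T(v_h,v_h)^{1/2}$ from \eqref{def:alternative-stabilization} (after also bounding $\|\Pi_F^k(v_F - \mathcal{R} v_h|_T)\|$ by noting $\Pi_F^k \mathcal{R} v_h|_T$ differs from the restriction by a projection, exactly as in the proof of \Cref{lem:equiv-stab}). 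An application of \Cref{lem:equiv-stab} converts $\widetilde{\s}_T(v_h,v_h)$ into $\s_T(v_h,v_h)$, and squaring finishes the estimate.

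The main obstacle I anticipate is the bookkeeping with the $L^2$ projections on the faces: one must be careful that $\Psi_k \nu_T \in P_k(F)^n$ so that inserting $\Pi_F^k$ in front of $v_F - \mathcal{R} v_h|_T$ is legitimate, and that the resulting face term really is controlled by $\widetilde{\s}_T$ rather than by something involving $\mathcal{R} v_h|_T$ without a projection — this is handled exactly as in \eqref{ineq:proof-equiv-stab-split} of the proof of \Cref{lem:equiv-stab}, splitting off the rigid-body-motion part $\varphi_{\mathrm{RM}}$ if needed. A secondary point is justifying the \emph{local} form of \eqref{def:sym-grad-rec} on a single simplex $T$; this is immediate because the reconstruction is defined simplex-wise and one may choose $\Phi_k$ supported on $T$.
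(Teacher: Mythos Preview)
Your proposal is correct and follows essentially the same argument as the paper: test the difference against itself, use the defining relation \eqref{def:sym-grad-rec} together with an integration by parts on $\varepsilon_\pw(\mathcal{R}v_h)$, then apply Cauchy--Schwarz with inverse and discrete trace inequalities to land on $\widetilde{\s}_T(v_h,v_h)$ and invoke \Cref{lem:equiv-stab}. Apart from a harmless overall sign slip in your displayed identity, the only superfluous part is your closing worry about further bounding $\|\Pi_F^k(v_F - \mathcal{R}v_h|_T)\|_{L^2(F)}$ via rigid-body motions---that term is already exactly the face contribution to $\widetilde{\s}_T$, so no extra work is needed there.
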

\begin{proof}
	Given $v_h = (v_\Tcal, v_\Fcal) \in V_h$, abbreviate $\Phi_k \coloneqq (\varepsilon_\pw(\mathcal{R} v_h) - \varepsilon_h v_h)|_T \in P_k(T;\mathbb{S})$. The definition of $\varepsilon_h$ in \eqref{def:sym-grad-rec} and an integration by parts imply
	\begin{align*}
		\|\Phi_k\|^2_{L^2(T)} &= (\varepsilon(\mathcal{R} v_h) - \varepsilon_h v_h, \Phi_k)_{L^2(T)}\\
		& = (v_T - \mathcal{R} v_h, \div\, \Phi_k)_{L^2(T)} + \sum_{F \in \Fcal(T)} (\mathcal{R} v_h|_T - v_F, \Phi_k \nu_F)_{L^2(F)}.
	\end{align*}
	Since $\div\, \Phi_k \in P_k(T)^n$ and $\Phi_k \nu_F \in P_k(F)^n$, this, a Cauchy inequality, the inverse inequality $\|\div\, \Phi_k\|_{L^2(T)} \lesssim h_T^{-1}\|\Phi_k\|_{L^2(T)}$, and the discrete trace inequality $\|\Phi_k\|_{L^2(F)} \lesssim h_F^{-1/2}\|\Phi_k\|_{L^2(T)}$ for any $F \in \Fcal(T)$ provide
	\begin{align*}
		\|\Phi_k\|_{L^2(T)}^2 \lesssim \|&h_T^{-1}\delta_{T}^k v_h\|_{L^2(T)}^2 + \sum_{F \in \Fcal(T)} h_F^{-1}\|\delta_{TF}^k v_h\|_{L^2(F)}^2 = \widetilde{\s}_T(v_h, v_h).
	\end{align*}
	This and \Cref{lem:equiv-stab} conclude the proof.
\end{proof}

\subsection{Discrete formulation}
The discrete problem seeks the solution $u_h \in V_h$ to
\begin{align}\label{def:discrete-problem}
	a_h(u_h,v_h) = \int_\Omega f \cdot v_\Tcal \d{x} + \int_{\Gamma_\mathrm{N}} g \cdot v_\Fcal \d{s} \quad\text{for all } v_h = (v_\Tcal, v_\Fcal) \in V_h
\end{align}
with the discrete bilinear form
\begin{align}
	a_h(u_h,v_h) &\coloneqq (\mathbb{C} \varepsilon_h u_h, \varepsilon_h v_h)_{L^2(\Omega)} + \s(u_h,v_h)\label{def:a-h}.
\end{align}
The following result proves that $a_h$ is a scalar product in $V_h$ and the induced norm is denoted by $\|\bullet\|_{a_h}$.
Recall the norm $\|\bullet\|_h$ from \eqref{def:norm-h}.

\begin{theorem}[Existence and uniqueness of discrete solutions]\label{thm:norm-equivalence}
	There exist positive constants $\newcnst\label{cnst:norm-equivalence-1}, \newcnst\label{cnst:norm-equivalence-2}$ that solely depend on $k$ and the shape regularity of $\Tcal$ with
	\begin{align}\label{ineq:norm-equivalence}
		\cnst{cnst:norm-equivalence-1}^{-1}\|v_h\|_h \leq \|\varepsilon_h v_h\| + |v_h|_{\widehat{\s}} \approx \|\varepsilon_\pw (\mathcal{R} v_h)\| + |v_h|_{\widehat{\s}} \leq \cnst{cnst:norm-equivalence-2} \|v_h\|_h
	\end{align}
	for any $v_h \in V_h$.
	In particular, $a_h$ from \eqref{def:a-h} is a scalar product on $V_h$ and there exists a unique discrete solution to \eqref{def:discrete-problem}.
\end{theorem}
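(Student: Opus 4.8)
The plan is to establish the chain of inequalities \eqref{ineq:norm-equivalence} from left to right and then read off the scalar-product property and well-posedness as immediate corollaries. The central quantity is $\|\varepsilon_h v_h\| + |v_h|_\s$, and I want to sandwich it between multiples of $\|v_h\|_h$ while also showing it is comparable to $\|\varepsilon_\pw(\mathcal R v_h)\| + |v_h|_\s$. The equivalence with the $\mathcal R$-version is the cheapest: \Cref{lem:epsR-epsh} gives $\|\varepsilon_\pw(\mathcal R v_h) - \varepsilon_h v_h\|_{L^2(T)}^2 \lesssim \s_T(v_h,v_h)$, so a triangle inequality yields $\|\varepsilon_\pw(\mathcal R v_h)\| \le \|\varepsilon_h v_h\| + (\sum_T \s_T(v_h,v_h))^{1/2} \lesssim \|\varepsilon_h v_h\| + |v_h|_\s$ (using $\mu \ge \mu_0 > 0$ to control the unweighted stabilization by $|v_h|_\s$), and symmetrically in the other direction. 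Hence the middle $\approx$ in \eqref{ineq:norm-equivalence} is free, and it suffices to prove the two outer bounds for, say, the $\varepsilon_h$-version.

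For the upper bound $\|\varepsilon_h v_h\| + |v_h|_\s \le \cnst{cnst:norm-equivalence-2}\|v_h\|_h$, I would bound the two summands separately. For $\|\varepsilon_h v_h\|$: test \eqref{def:sym-grad-rec} with $\Phi_k = \varepsilon_h v_h$ on each simplex, integrate by parts to move the divergence back onto $v_\Tcal$ (legitimate since $v_T$ is a polynomial), producing $\|\varepsilon_h v_h\|_{L^2(T)}^2 = (\varepsilon_\pw v_\Tcal, \varepsilon_h v_h)_{L^2(T)} + \sum_{F\in\Fcal(T)}(v_F - v_T, (\varepsilon_h v_h)\nu_F)_{L^2(F)}$; a Cauchy inequality together with the discrete trace inequality $\|(\varepsilon_h v_h)\nu_F\|_{L^2(F)} \lesssim h_F^{-1/2}\|\varepsilon_h v_h\|_{L^2(T)}$ then gives $\|\varepsilon_h v_h\|_{L^2(T)} \lesssim \|\varepsilon_\pw v_T\|_{L^2(T)} + \sum_{F\in\Fcal(T)} h_F^{-1/2}\|v_F - v_T\|_{L^2(F)}$, and summing over $T$ bounds it by $\|v_h\|_h$ after noting $h_F^{-1}\|v_F-v_T\|_{L^2(F)}^2$ summed is at least $\|v_h\|_h^2 - \|\varepsilon_\pw v_\Tcal\|^2$ up to shape-regularity constants (actually $h_F \approx h_T$, so the raw $\|v_F-v_T\|_{L^2(F)}^2$ in \eqref{def:norm-h} and the scaled version differ by $h_T^{\pm 1}$, a constant after one accepts that only a comparison is needed — here one needs $h_{\max}$ or rather notices the definition \eqref{def:norm-h} has no $h_F$ weight, so I must be slightly careful and instead bound $\|v_F-v_T\|_{L^2(F)}^2 \le h_F \cdot h_F^{-1}\|v_F-v_T\|_{L^2(F)}^2$ is the wrong direction; the clean fix is that the norm equivalence constants are allowed to depend on the mesh through shape regularity but should be scaling-correct, so I would insert $h_F^{-1}$ consistently — in fact re-examining \eqref{def:norm-h}, the jump term is genuinely unweighted, so the correct statement uses that on a fixed $\Tcal$ all $h_F$ are comparable to the local $h_T$ and the claimed constants depend on $\Tcal$; I will phrase the trace/inverse estimates with the proper $h_F$ powers and absorb the mismatch into $\cnst{cnst:norm-equivalence-1}$, $\cnst{cnst:norm-equivalence-2}$). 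For $|v_h|_\s$: expand $\s_T(v_h,v_h) = \sum_{F} h_F^{-1}\|\Pi_F^k(v_F - S_{TF}v_h)\|_{L^2(F)}^2$, write $v_F - S_{TF}v_h = (v_F - v_T) - (1-\Pi_T^k)\mathcal R v_h$, and control $(1-\Pi_T^k)\mathcal R v_h$ on $F$ by $h_T^{-1/2}\|(1-\Pi_T^k)\mathcal R v_h\|_{L^2(T)} \lesssim h_T^{1/2}\|\varepsilon_\pw \mathcal R v_h\|_{L^2(T)}$ (Poincaré plus the midpoint/rigid-body normalization \eqref{def:potential-reconstruction-2}), and then $\|\varepsilon_\pw\mathcal R v_h\|_{L^2(T)} \lesssim \|v_h\|_h$ from the definition \eqref{def:potential-reconstruction-1} by the same duality argument as above; combined with $h_F^{-1}\|v_F - v_T\|_{L^2(F)}^2 \lesssim \|v_h\|_h^2$ this closes the upper bound.

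The lower bound $\|v_h\|_h \le \cnst{cnst:norm-equivalence-1}(\|\varepsilon_h v_h\| + |v_h|_\s)$ is the heart of the matter. Since $\|v_h\|_h^2 = \|\varepsilon_\pw v_\Tcal\|^2 + \sum_{T,F}\|v_F - v_T\|_{L^2(F)}^2$, I need to dominate both pieces. The jump part: using \Cref{lem:equiv-stab}, $|v_h|_\s^2 \approx \sum_T \mu|_T \widetilde\s_T(v_h,v_h) \gtrsim \sum_T \big(h_T^{-2}\|\Pi_T^k(v_T - \mathcal R v_h)\|_{L^2(T)}^2 + \sum_{F\in\Fcal(T)} h_F^{-1}\|\Pi_F^k(v_F - \mathcal R v_h|_T)\|_{L^2(F)}^2\big)$, so both $v_T$ and $v_F$ are pinned to $\mathcal R v_h$ up to scaled $L^2$-controlled errors, and $\|v_F - v_T\|_{L^2(F)} \le \|\Pi_F^k(v_F - \mathcal R v_h|_T)\|_{L^2(F)} + \|\Pi_F^k(\mathcal R v_h|_T - v_T)\|_{L^2(F)} \lesssim h_F^{1/2}\widetilde\s_T^{1/2} + h_T^{-1/2}\|\Pi_T^k(\mathcal R v_h - v_T)\|_{L^2(T)}$ via a discrete trace inequality, which is controlled by $|v_h|_\s$. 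For the volume part $\|\varepsilon_\pw v_\Tcal\|$ I would split $\varepsilon_\pw v_\Tcal = \varepsilon_\pw(v_\Tcal - \mathcal R v_h) + \varepsilon_\pw \mathcal R v_h$; the first term is an $\varepsilon$ of a polynomial whose lower-order data ($\Pi_T^k$ part, hence after an inverse inequality the full $L^2$ norm up to $h_T^{-1}$) is stabilization-controlled, modulo the subtlety that $v_\Tcal - \mathcal R v_h$ need not be a polynomial — but $v_\Tcal - \Pi_T^k\mathcal R v_h$ is, and $\varepsilon_\pw(v_\Tcal - \Pi_T^k \mathcal R v_h)$ differs from $\varepsilon_h v_h$ only by $\varepsilon_\pw\mathcal R v_h - \varepsilon_h v_h$ plus $\varepsilon_\pw((1-\Pi_T^k)\mathcal R v_h)$, and the former is $\lesssim \s_T^{1/2}$ by \Cref{lem:epsR-epsh} — so in fact the cleanest route is to observe $\varepsilon_h v_h = \varepsilon_\pw \mathcal R v_h - (\varepsilon_\pw\mathcal R v_h - \varepsilon_h v_h)$ and bound $\|\varepsilon_\pw\mathcal R v_h\| \le \|\varepsilon_h v_h\| + |v_h|_\s$ (already shown), then bound $\|\varepsilon_\pw v_\Tcal\|$ by an inverse estimate against $\|v_\Tcal - \Pi_T^k\mathcal R v_h\|_{L^2(T)}$ (stabilization) plus $\|\varepsilon_\pw\Pi_T^k\mathcal R v_h\|_{L^2(T)} \le \|\varepsilon_\pw\mathcal R v_h\|_{L^2(T)}$ (stability of $L^2$-projection under $\varepsilon$ needs the Korn-normalization again, or one simply uses $\|\D_\pw\Pi_T^k\mathcal R v_h\| \lesssim \|\D_\pw\mathcal R v_h\|$ from a standard $W^{1,2}$-stability of $\Pi_T^k$ on shape-regular simplices). \textbf{The main obstacle} is organizing these estimates so that no circularity creeps in — every bound must ultimately trace back to $\|\varepsilon_h v_h\| + |v_h|_\s$ without re-invoking $\|v_h\|_h$ — and, relatedly, keeping the $\lambda$-independence transparent, which here is automatic since $a_h$'s stabilization carries only the weight $\mu \in [\mu_0,\mu_1]$, but must be stated. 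Once \eqref{ineq:norm-equivalence} is in hand, positivity of $a_h$ follows because $a_h(v_h,v_h) = \|\varepsilon_h v_h\|_{\mathbb C}^2 + |v_h|_\s^2 \ge 2\mu_0\|\varepsilon_h v_h\|^2 + |v_h|_\s^2 \gtrsim \|v_h\|_h^2$, which vanishes only if $v_h = 0$ (the map $v_h \mapsto \|v_h\|_h$ is a norm on $V_h$ since $\varepsilon_\pw v_\Tcal = 0$ and $v_F = v_T$ on all sides force each $v_T$ to be a global rigid-body motion respecting the homogeneous Dirichlet trace, hence zero); symmetry and bilinearity are evident, so $a_h$ is a scalar product and \eqref{def:discrete-problem} has a unique solution by finite-dimensional linear algebra.
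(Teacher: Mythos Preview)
The paper's proof is a one-line citation to \cite[Section 7.2.6]{DiPietroDroniou2020} and gives no argument of its own. Your sketch follows exactly the route that reference takes: the middle $\approx$ via \Cref{lem:epsR-epsh} and a triangle inequality; the upper bound by testing \eqref{def:sym-grad-rec} with $\Phi_k = \varepsilon_h v_h$, integrating by parts, and applying a discrete trace inequality, together with a similar duality bound on $\mathcal{R}$ and hence on the stabilization; the lower bound by passing through $\widetilde{\s}$ (via \Cref{lem:equiv-stab}) to pin $v_T$ and $v_F$ to $\mathcal{R} v_h$, then recovering $\|\varepsilon_\pw v_\Tcal\|$ and the jump terms from $\|\varepsilon_\pw(\mathcal{R} v_h)\| + |v_h|_\s$. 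The closing positive-definiteness argument (each $v_T$ a rigid-body motion, continuity across sides forcing a global rigid-body motion, Dirichlet trace killing it) is correct. So your proposal is essentially the content the paper delegates to the reference.

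One point you flag deserves emphasis rather than a workaround: the jump term in \eqref{def:norm-h} as printed carries no $h_F^{-1}$ weight, whereas the stabilization $\s$ does. With that mismatch the equivalence constants in \eqref{ineq:norm-equivalence} would scale with the mesh size and could not depend only on $k$, $\mu_0$, $\mu_1$, and shape regularity. This is almost certainly a typo in the paper --- the HHO norm in \cite{DiPietroDroniou2020} carries the weight $h_F^{-1}$ --- so your instinct to reinstate it and run the trace/inverse estimates with the correct powers is right; your argument goes through cleanly once that correction is made, and you should not try to ``absorb the mismatch'' into the constants.
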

\begin{proof}
	This result is established in \cite[Section 7.2.6]{DiPietroDroniou2020}.
\end{proof}
Throughout the remaining parts of this paper, $u_h$ denotes the unique solution to the discrete problem \eqref{def:discrete-problem} and $\sigma_h \coloneqq \mathbb{C} \varepsilon_h u_h \in P_k(\Tcal;\mathbb{S})$ is
the discrete stress.
\subsection{Comments on alternative schemes}

\begin{remark}[equivalent stabilization]
	The results of this paper can be extended to \eqref{def:discrete-problem} with $\s$ replaced by $\widetilde{\s}$ (by equivalence of stabilizations in \Cref{lem:equiv-stab}).
\end{remark}

\begin{remark}[HDG stabilization]\label{rem:HDG-stab}
	Another HHO method utilizes the ansatz space $V_h \coloneqq P_{k+1}(\Tcal)^n \times P_k(\Fcal\setminus \Fcal_\mathrm{D})^n$ with the Lehrenfeld-Sch\"oberl stabilization
	\begin{align}\label{def:hdg-stab}
		\s_T^{\mathrm{HDG}}(u_h,v_h) \coloneqq \sum_{F \in \Fcal(T)} h_F^{-1} \int_F \Pi_F^k(u_F - u_T) \cdot \Pi_F^k (v_F - v_T) \d{s}
	\end{align}
	for $T \in \Tcal$
	from the HDG methodology \cite{Lehrenfeld2010}. 
	Straight-forward adaptations of the arguments in the proof of \Cref{lem:equiv-stab} provides the local equivalence
	\begin{align}\label{ineq:equiv-stab-hdg}
		\s_T^{\mathrm{HDG}}(v_h,v_h)^{1/2} \approx h_T^{-1}\|\delta_{T}^{(k+1)} v_h\|_{L^2(T)} + \sum_{F \in \Fcal(T)} h_F^{-1/2} \|\delta_{TF}^k v_h\|_{L^2(F)}.
	\end{align}
	This holds for any $v_h \in P_{k+1}(\Tcal)^n \times P_k(\Fcal\setminus \Fcal_\mathrm{D})^n$ and the modified difference operator
	$\delta_{T}^{(k+1)} v_h \coloneqq v_T - \mathcal{R} v_h \in P_{k+1}(T)^n$ and  $\delta_{TF}^k v_h$ from \eqref{def:difference-operator} with the representation $\Pi_F^k(v_F - v_T) = \Pi_F^k(\delta_{TF}^k v_h - \delta_T^{(k+1)} v_h)$ from
	\cite[Section 5.1.6]{DiPietroDroniou2020}.
\end{remark}
\begin{remark}[HDG error control]\label{rem:HDG}
	The equivalence \eqref{ineq:equiv-stab-hdg} allow for a generalization of the error analysis of this paper to the HHO scheme from \cite[Section 5.1.6]{DiPietroDroniou2020}, which can also be seen as an HDG method.
	In particular, the error estimates
	\eqref{ineq:a-priori}--\eqref{ineq:reliability} hold verbatim with the discrete ansatz space $V_h = P_{k+1}(\Tcal)^n \times P_k(\Fcal\setminus\Fcal_\mathrm{D})^n$ and the stabilization $\s^\mathrm{HDG}$ from \eqref{def:hdg-stab}.
\end{remark}
\begin{remark}[Comparison to conforming, nonconforming, and mixed FEM]
	An adaptive conforming $hp$ FEM is preferable if high accuracy is mandatory.
	For lower-order approximations, the analyzed scheme is one of the simplest $\lambda$-robust methods in 3D. (Notice that the nonconforming Kouhia-Stenberg FEM \cite{KouhiaStenberg1995} applies only in 2D). The Arnold-Winther mixed scheme \cite{ArnoldWinther2002} is more elaborate than the locking-free conforming FEM for $k \geq 4$ \cite{BabuskaSuri1992}. For weakly symmetric FEM, we refer to \cite{LedererStenberg2024} and the references quoted therein, but mention that strong symmetry is often desirable.
\end{remark}
\begin{remark}[Comparison within skeletal methods]
	The simplest VEM method for $k = 1$ is not looking-free \cite{virtualelements2023}. Compared to discontinuous Galerkin schemes, the HHO methodology is parameter-free. A close competitor is the HDG method with the Lehrenfeld-Sch\"oberl stabilization from \cite{Lehrenfeld2010}, cf.~\Cref{rem:HDG-stab}--\ref{rem:HDG}.
\end{remark}
\begin{remark}[Stokes equation]
	The incompressible limit as $\lambda \to \infty$ leads formally to the Stokes equations in the symmetric formulation (i.e.~in terms of the linear Green strain $\varepsilon$). The estimates in this paper are $\lambda$-robust and hence hold for the resulting
	Stokes problem as well and lead to a stable novel HHO discretisation with corresponding error terms \eqref{ineq:a-priori}--\eqref{ineq:reliability}.
	A similar extension is also
	expected for the known HHO scheme \cite{DiPietroErnLinkeSchienk2016} and their HDG relatives in the non-symmetric form
	from the textbooks \cite{GiraultRaviart1986,BrennerScott2008,BoffiBrezziFortin2013}
	(when $\Gamma_\mathrm{N} = \emptyset$, the linear Green strain $\varepsilon$ can be replaced by the gradient $\D$).
\end{remark}

\section{A priori error analysis}\label{sec:a-priori}
This section establishes the quasi-best approximation result \eqref{ineq:a-priori}.

\subsection{Main result}
Let $u \in V$ denote the exact solution to \eqref{def:continuous-problem} and $\sigma = \mathbb{C} \varepsilon(u)$.
To obtain $\lambda$-robust error estimates for the $L^2$ stress error $\|\sigma - \sigma_h\|$, the tr-dev-div lemma stated in \Cref{lem:tr-div-dev} below is utilized under the following assumption, which imposes
mild geometric assumptions on the initial triangulation $\Tcal_0$ for $|\Gamma_{\mathrm{N}}| > 0$. 
\begin{assumption}\label{assumption} 
	Either $|\Gamma_{\mathrm{N}}| = 0$ or there exists $\varphi_0 \in S^k_\mathrm{D}(\Tcal_0)$ with $\int_\Omega \div \varphi_0 \d{x} \neq 0$.
\end{assumption}

\begin{remark}[(\ref{assumption}) for $k < n$]
	Suppose that $z$ is a vertex of $\Tcal_0$ in the relative interior of $\Gamma_{\mathrm{N}}$, define $\varphi_0 \coloneqq a \varphi_z \in S^1_\mathrm{D}(\Tcal_0)$ with a vector $a \in \R^n$ and the nodal basis function $\varphi_z$ associated with $z$. Let $\Fcal_{z} \coloneqq \{F \in \Fcal_0: z \in F\} \subset \Fcal_{0,\mathrm{N}}$ denote the set of all sides containing $z$, where $\Fcal_{0,\mathrm{N}}$ is the set of Neumann sides of $\Tcal_0$. An integration by parts reveals
	\begin{align}\label{eq:assumption-sufficient-condition}
		n\int_\Omega \div\, \varphi_0 \d{x} = a \cdot \sum_{F \in \Fcal_{z}} |F| \nu_F.
	\end{align}
	In 2D, there are $|\Fcal_{z}| = 2$ edges and $a \coloneqq \sum_{F \in \Fcal_{z}} |F| \nu_F \neq 0$ leads in \eqref{eq:assumption-sufficient-condition} to (\ref{assumption}).
	In 3D, the same arguments lead to (\ref{eq:assumption-sufficient-condition}) for a vertex $z$ in a flat part of $\Gamma_{\mathrm{N}}$ or $z$ is a convex/concave corner point.
	Notice that this can always be generated by mesh-refinements.
\end{remark}

\begin{remark}[(\ref{assumption}) for $k \geq n$]
	Given $F \in \Fcal_{0,\mathrm{N}}$, consider the face bubble function $b_F \coloneqq \prod_{z \in \mathcal{V}_0(F)} \varphi_z$ with the set $\mathcal{V}_0(F)$ of all vertices of $F$. For $\varphi_0 \coloneqq b_F \nu_F \in S^n_\mathrm{D}(\Tcal_0)$, an integration by parts provides
	\begin{align*}
		\int_\Omega \div\, \varphi_0 \d{x} = \int_F b_F \d{x} > 0.
	\end{align*}
	In other words, $k \geq n$ implies (\ref{assumption}) without any additional geometric assumption.
\end{remark}
The following result implies the convergence rates from \cite{DiPietroErn2015,DiPietroDroniou2020}.

\begin{theorem}[quasi-best approximation]\label{thm:best-approximation}
	Suppose \eqref{assumption}, then the discrete solution $u_h$ to \eqref{def:discrete-problem} and $\sigma_h = \mathbb{C} \varepsilon_h u_h$ satisfy \eqref{ineq:a-priori}.
	The $\lambda$-independent constant $C_\mathrm{qb}$ exclusively depends on $k$, $\Gamma_{\mathrm{D}}$, $\Sigma_0$, and $\mathbb{T}$,
	where
	\begin{align}\label{def:Sigma-0}
		\Sigma_0 \coloneqq \begin{cases}
			\{\tau \in L^2(\Omega)^{n \times n} : \int_\Omega \tr\, \tau \d{x} = 0\} \text{ if } \Gamma_{\mathrm{N}} = \emptyset \text{ and else}\\
			\{\tau \in L^2(\Omega)^{n \times n} : (\tau, \varepsilon(\varphi_0))_{L^2(\Omega)} = 0\}
		\end{cases}
	\end{align}
	is a closed subspace
	of $L^2(\Omega)^{n \times n}$ with $\Tcal_0$ and $\varphi_0$ from (\ref{assumption}).
\end{theorem}

Several preliminary results precede the proof of \Cref{thm:best-approximation} in \Cref{sec:proof-a-priori}.

\subsection{Right inverse}
This subsection recalls the right inverse $\mathcal{J}$ of $\I$
from \cite{ErnZanotti2020}.
Let $\mathcal{A} : V_h \to S^{k+1}_\mathrm{D}(\Tcal)$ denote the averaging operator that maps $v_h \in V_h$ onto $\mathcal{A} v_h \in S^{k+1}_\mathrm{D}(\Tcal)$ by averaging all possible values of $\mathcal{R} v_h$ at the nodal degrees of freedom of $S^{k+1}_\mathrm{D}(\Tcal)$.
\begin{lemma}[right-inverse]\label{lem:conforming-companion}
	There exists a linear operator $\mathcal{J} : V_h \to V$ such that $\mathrm{I} \circ \mathcal{J} = \mathrm{Id}$ and any $v_h \in V_h$ satisfies (a) $\|\varepsilon_\pw(\mathcal{J}v_h - \mathcal{R}v_h)\| \lesssim \inf_{v \in V} \|\D_\pw(v - \mathcal{R} v_h)\| + |v_h|_{\widehat{\s}}$ and (b) $\mu_0^{1/2}\|\varepsilon (\mathcal{J} v_h)\| \lesssim \|v_h\|_{a_h}$.
\end{lemma}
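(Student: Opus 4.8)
The plan is to construct $\mathcal{J}$ explicitly by correcting the potential reconstruction $\mathcal{R}v_h$, which is only piecewise polynomial, into a globally conforming field whose interpolant recovers $v_h$. The natural first attempt is the averaging operator $\mathcal{A}v_h \in S^{k+1}_\mathrm{D}(\Tcal)$, but $\mathcal{A}$ alone does not satisfy $\I\circ\mathcal{A} = \mathrm{Id}$, so I would add a bubble-type correction. Concretely, following \cite{ErnZanotti2020}, I would set
\begin{align*}
	\mathcal{J} v_h \coloneqq \mathcal{A} v_h + \mathcal{B}(v_h - \I\mathcal{A} v_h),
\end{align*}
where $\mathcal{B}: V_h \to V$ is a bubble-corrector that, on each simplex $T$ and each side $F$, uses volume and face bubble functions to prescribe the moments $\Pi_T^k$ and $\Pi_F^k$ of the displacement while keeping the field continuous and vanishing on $\Gamma_\mathrm{D}$. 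The key algebraic fact to verify is $\I\circ\mathcal{J} = \mathrm{Id}$: since $\I$ is determined by the cell moments $\Pi_\Tcal^k$ and face moments $\Pi_F^k$, and $\mathcal{B}$ is designed so that $\I\mathcal{B}(w_h)$ restores exactly the moment data of any $w_h \in V_h$ whose moments are ``small'' (here $w_h = v_h - \I\mathcal{A}v_h$), this reduces to a finite-dimensional unisolvence statement on the reference simplex.

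For part (a), I would write $\mathcal{J}v_h - \mathcal{R}v_h = (\mathcal{A}v_h - \mathcal{R}v_h) + \mathcal{B}(v_h - \I\mathcal{A}v_h)$ and estimate the two terms separately. The averaging error $\|\varepsilon_\pw(\mathcal{A}v_h - \mathcal{R}v_h)\|$ is controlled by the jumps $\Pi_F^k[\mathcal{R}v_h]_F$ across interior sides and the boundary values on $\Gamma_\mathrm{D}$ via a standard scaled-trace/approximation argument for averaging operators on simplicial meshes; by Corollary \ref{cor:kernel-stabilization} and Lemma \ref{lem:equiv-stab} these jump terms are exactly controlled by $|v_h|_\s$, giving $\|\varepsilon_\pw(\mathcal{A}v_h - \mathcal{R}v_h)\| \lesssim |v_h|_\s + \inf_{v\in V}\|\D_\pw(v - \mathcal{R}v_h)\|$ (the infimum term entering because $\mathcal{A}$ is quasi-optimal relative to conforming competitors). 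For the bubble term, a scaling argument bounds $\|\varepsilon \mathcal{B}(w_h)\|$ by the $h$-weighted $L^2$ norms of the moment data of $w_h = v_h - \I\mathcal{A}v_h$, i.e.\ by $\|h_\Tcal^{-1}\Pi_\Tcal^k(v_\Tcal - \mathcal{A}v_h)\| + (\sum_F h_F^{-1}\|\Pi_F^k(v_F - \mathcal{A}v_h)\|_{L^2(F)}^2)^{1/2}$; comparing $v_\Tcal, v_F$ with $\mathcal{R}v_h$ (using $\Pi_\Tcal^k\mathcal{R}v_h - v_\Tcal$, $\Pi_F^k\mathcal{R}v_h|_T - v_F$, which is $\widetilde{\s}_T(v_h,v_h)^{1/2}$ up to the already-estimated $\|\mathcal{A}v_h - \mathcal{R}v_h\|$ piece) and invoking Lemma \ref{lem:equiv-stab} again closes the bound.

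For part (b), I would combine part (a) with Lemma \ref{lem:epsR-epsh} and the definition of $\|\bullet\|_{a_h}$. A triangle inequality gives $\|\varepsilon\mathcal{J}v_h\| \le \|\varepsilon_\pw(\mathcal{J}v_h - \mathcal{R}v_h)\| + \|\varepsilon_\pw(\mathcal{R}v_h) - \varepsilon_h v_h\| + \|\varepsilon_h v_h\|$; the middle term is $\lesssim |v_h|_\s$ by Lemma \ref{lem:epsR-epsh}, the last is $\le \mu_0^{-1/2}\|\mu^{1/2}\varepsilon_h v_h\| \lesssim \|v_h\|_{a_h}$, and for the first term I would choose $v = 0$ (or any fixed conforming competitor) in the infimum of part (a), bound $\|\D_\pw\mathcal{R}v_h\| \lesssim \|\varepsilon_\pw\mathcal{R}v_h\|$ by Korn (valid because of the moment conditions \eqref{def:potential-reconstruction-2}), and then use $\|\varepsilon_\pw\mathcal{R}v_h\| \approx \|\varepsilon_h v_h\| + |v_h|_\s \lesssim \|v_h\|_{a_h}$ from Theorem \ref{thm:norm-equivalence}. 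The main obstacle is the careful construction and scaling analysis of the bubble-corrector $\mathcal{B}$ so that it exactly realizes $\I\circ\mathcal{J} = \mathrm{Id}$ while remaining stable in $H^1$ with the correct $h$-weights — this is where one must be precise about which moments are fixed on cells versus faces and about the interaction between $\mathcal{A}$ and $\I$; the rest is an assembly of Lemma \ref{lem:equiv-stab}, Lemma \ref{lem:epsR-epsh}, and Theorem \ref{thm:norm-equivalence}.
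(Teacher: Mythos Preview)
Your construction of $\mathcal{J}=\mathcal{A}+\mathcal{B}(\mathrm{Id}-\I\mathcal{A})$ and your outline for part~(a) match the paper's approach (which also cites \cite{ErnZanotti2020} and bounds the jumps of $\mathcal{R}v_h$ by the conforming distance via an efficiency estimate plus the alternative stabilization~$\widetilde{\s}$).

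Your route for part~(b), however, contains a genuine gap. After invoking~(a) with $v=0$ you need $\|\D_\pw\mathcal{R}v_h\|\lesssim\|\varepsilon_\pw\mathcal{R}v_h\|$, which you justify by the second Korn inequality ``valid because of the moment conditions~\eqref{def:potential-reconstruction-2}''. This is false: \eqref{def:potential-reconstruction-2} fixes $\int_T\D_\mathrm{ss}\mathcal{R}v_h=\sum_{F\in\Fcal(T)}\int_F\mathrm{asym}(\nu_T\otimes v_F)$, which is \emph{not} zero in general, so the elementwise second Korn inequality does not apply to $\mathcal{R}v_h$. (Contrast Lemma~\ref{lem:quasi-opt-R-I}, where Korn is applied to $v-\mathcal{R}\I v$; there the skew mean \emph{does} vanish because $\int_T\D_\mathrm{ss}v=\sum_F\int_F\mathrm{asym}(\nu_T\otimes v)$ matches the second condition in~\eqref{def:potential-reconstruction-2} for $v_F=\Pi_F^k v$.) A concrete obstruction: on an interior simplex $T$ take $v_T=0$ and $v_F=\Pi_F^k(S(x-x_T))$ for a nonzero skew matrix $S$; then $\mathcal{R}v_h|_T=S(x-x_T)$, so $\varepsilon(\mathcal{R}v_h)|_T=0$ while $\D\mathcal{R}v_h|_T=S\neq 0$.

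The paper avoids this entirely: for~(b) it does \emph{not} pass through~(a) but returns to the intermediate jump estimate $\|\D_\pw(\mathcal{R}v_h-\mathcal{J}v_h)\|^2\lesssim\sum_F h_F^{-1}\|[\mathcal{R}v_h]_F\|_{L^2(F)}^2+\widetilde{\s}(v_h,v_h)$ and bounds the jumps differently, via $\|[\mathcal{R}v_h]_F\|\le\|[\mathcal{R}v_h-v_\Tcal]_F\|+\|[v_\Tcal]_F\|$ together with $\|h_\Tcal^{-1}(\mathcal{R}v_h-\Pi_\Tcal^k\mathcal{R}v_h)\|\lesssim\|\varepsilon_\pw(\mathcal{R}v_h)\|$, so that everything is controlled by $\|v_h\|_h\approx\|v_h\|_{a_h}$ from Theorem~\ref{thm:norm-equivalence}. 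You should adopt this direct bound on the jumps rather than the Korn step.
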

\begin{proof}[Proof of \Cref{lem:conforming-companion}.a]
	The construction of $\mathcal{J}$ is provided in \cite{ErnZanotti2020} for the scalar case with homogeneous boundary condition along the entire body. The extension to the vector-valued case by component-wise application of the operator therein is straightforward with a minor modification in \cite[Def.~(4.19)]{ErnZanotti2020} owing to possible Neumann boundary conditions: The set of interior sides $\Fcal(\Omega)$ is replaced by $\Fcal\setminus\Fcal_\mathrm{N}$ to enforce homogeneous boundary data only on $\Gamma_{\mathrm{D}}$.
	The estimate
	\begin{align}\label{ineq:proof-conf-companion-1}
		\|\D_\pw(\mathcal{R} v_h - \mathcal{J} v_h)\|^2 \lesssim \sum_{F \in \Fcal\setminus\Fcal_\mathrm{N}} h_F^{-1}\|[\mathcal{R} v_h]_F\|^2_{L^2(F)} + \widetilde{\s}(v_h,v_h)
	\end{align}
	follows from the arguments in the proof of \cite[Proposition 4.7]{ErnZanotti2020}, where \cite[Proposition 4.6]{ErnZanotti2020} applies to $(\Pi_\Tcal^k(v_\Tcal - \mathcal{A} v_h), (\Pi_F^k(v_F - \mathcal{A} v_h))_{F \in \Fcal\setminus\Fcal_\mathrm{D}})$ instead of $(v_\Tcal - \mathcal{A} v_h, (v_F - \mathcal{A} v_h)_{F \in \Fcal\setminus\Fcal_\mathrm{D}})$ in the ultimate formula of \cite[p.~2179]{ErnZanotti2020}.
	A triangle and a Poincar\'e inequality along $F \in \mathcal{F}$ imply
	\begin{align}\label{ineq:proof-conf-companion-2}
		\|[\mathcal{R} v_h]_F\|_{L^2(F)} \lesssim h_F\|[\D_\pw \mathcal{R} v_h \times \nu_F]_F\|_{L^2(F)} + \|[\Pi_F^0 \mathcal{R} v_h]_F\|_{L^2(F)}.
	\end{align}
	The bubble-function techniques \cite{Verfuerth2013} lead to the efficiency estimate
	\begin{align}\label{ineq:proof-conf-companion-3}
		h^{1/2}_F\|[\D_\pw \mathcal{R} v_h \times \nu_F]_F\|_{L^2(F)} \lesssim \inf_{v \in V} \|\D_\pw(v - \mathcal{R} v_h)\|_{L^2(\omega(F))},
	\end{align}
	cf.~\cite[Lemma 7]{BertrandCarstensenGraessle2021} for further details.
	The stability of the $L^2$ projection and a triangle inequality reveal
	\begin{align*}
		\|[\Pi_F^0 \mathcal{R} v_h]_F\|_{L^2(F)} \leq \|[\Pi_F^k \mathcal{R} v_h]_F\|_{L^2(F)} \leq \sum_{\{T \in \Tcal : F \in \Fcal(T)\}} \|\Pi_F^k(v_F - \mathcal{R} v_h|_T)\|_{L^2(F)}.
	\end{align*}
	The combination of this with \eqref{ineq:proof-conf-companion-1}--\eqref{ineq:proof-conf-companion-3} results in
	\begin{align*}
		\|\D_\pw(\mathcal{R} v_h - \mathcal{J} v_h)\|^2 \lesssim \inf_{v \in V} \|\D_\pw(v - \mathcal{R} v_h)\|^2 + \widetilde{\s}(v_h,v_h).
	\end{align*}
	This, \Cref{lem:equiv-stab}, and $\|\varepsilon_\pw(\mathcal{R} v_h - \mathcal{J} v_h)\| \leq \|\D_\pw(\mathcal{R} v_h - \mathcal{J} v_h)\|$ prove (a).\vspace*{0.2cm}
	
	\noindent\emph{Proof of \Cref{lem:conforming-companion}.b.}
	Recall the norm $\|\bullet\|_h$ from \eqref{def:norm-h}.
	Given $F \in \Fcal$, the triangle inequality provides $\|[\mathcal{R} v_h]_F\|_{L^2(F)} \leq \|[\mathcal{R} v_h - v_\Tcal]_F\|_{L^2(F)} + \|[v_\Tcal]_F\|_{L^2(F)}$ and $\|[v_\Tcal]_F\|_{L^2(F)} \leq \sum_{\{T \in \Tcal, F \in \Fcal(T)\}} \|v_T - v_F\|_{L^2(T)}$. This, \eqref{ineq:proof-conf-companion-1}, and a discrete trace inequality lead to
	\begin{align*}
		\|\D_\pw(\mathcal{R} v_h - \mathcal{J} v_h)\| \lesssim \|h_\Tcal^{-1}(\mathcal{R} v_h - v_\Tcal)\|_{L^2(\Omega)} + \widetilde{\s}(v_h,v_h)^{1/2} + \|v_h\|_h.
	\end{align*}
	In combination with $\|h_\Tcal^{-1}(\mathcal{R} v_h - \Pi_\Tcal^k \mathcal{R} v_h)\|_{L^2(\Omega)} \lesssim \|\varepsilon_\pw(\mathcal{R} v_h)\|$ from \cite[p.~8]{DiPietroErn2015} and a triangle inequality, we infer
	\begin{align}\label{ineq:proof-conf-companion-b}
		\|\D_\pw(\mathcal{R} v_h - \mathcal{J} v_h)\| \lesssim \|\varepsilon_\pw(\mathcal{R} v_h)\| + \widetilde{\s}(v_h,v_h)^{1/2} + \|v_h\|_h.
	\end{align}
	\Cref{lem:equiv-stab} and \ref{thm:norm-equivalence} imply
	$\|\varepsilon_\pw(\mathcal{R} v_h)\| + \widetilde{\s}(v_h,v_h)^{1/2} + \|v_h\|_h \approx \|\varepsilon_h v_h\| + |v_h|_{\widehat{\s}}$. This, $\mu_0^{1/2}(\|\varepsilon_h v_h\| + |v_h|_{\widehat{\s}})\lesssim \|v_h\|_{a_h}$ by the definition of $a_h$ in \eqref{def:a-h}, \eqref{ineq:proof-conf-companion-b}, and finally $\|\varepsilon_\pw(\mathcal{R} v_h - \mathcal{J} v_h)\| \leq \|\D_\pw(\mathcal{R} v_h - \mathcal{J} v_h)\|$ conclude the proof.
\end{proof}

\begin{remark}[$\varepsilon_h = \Pi_{\Tcal}^k \circ \varepsilon \circ \mathcal{J}$]\label{rem:orthogonality-disc}
	The interplay of \Cref{lem:commuting-diagram} and the right-inverse $\mathcal{J}$ of $\I$ leads, for any $v_h \in V_h$, to $\Pi_\Tcal^k \varepsilon(\mathcal{J} v_h) = \varepsilon_h \I \mathcal{J} v_h = \varepsilon_h v_h$.
\end{remark}

\subsection{Quasi-best approximation of stabilization}
The quasi best-approximation $|\I v|_\s \lesssim \|\mu^{1/2}\varepsilon_\pw(v - \mathcal{R} \I v)\|$ is well-known in the literature \cite{DiPietroErn2015,ErnZanotti2020}.
Since the right-hand side of \eqref{ineq:a-priori} involves the $L^2$ projection, the following result is required. Recall the set $\mathbb{T}$ of admissible triangulations from \Cref{sec:triangulation}.
\begin{lemma}[quasi-best approximation of Galerkin projection]\label{lem:stability-Galerkin}
	There exist constants $\newcnst\label{cnst:stability-1}$, $\newcnst\label{cnst:stability-2}$, $\newcnst\label{cnst:stability-3}$, that exclusively depend on $k$ and $\mathbb{T}$, such that (a)--(b) hold.
	\begin{enumerate}
		\item[(a)] Any $v \in H^1(T)^n$ on $T \in \Tcal$ with $\Tcal \in \mathbb{T}$ satisfies
		\begin{align}\label{ineq:stability}
		\cnst{cnst:stability-1}^{-1}\s_T(\I v, \I v)^{1/2} \leq \|\varepsilon(v - \mathcal{R} \I v)\|_{L^2(T)} \leq \cnst{cnst:stability-2}\|(1 - \Pi_T^k) \varepsilon (v)\|_{L^2(T)}.
		\end{align}
		\item[(b)] Any $v \in V$ satisfies
		\begin{align}\label{ineq:stability-2}
			\|\varepsilon(v - \mathcal{J} \I v)\| \leq \cnst{cnst:stability-3} \|(1 - \Pi_\mathcal{T}^k) \varepsilon(v)\|_{L^2(\Omega)}.
		\end{align}
	\end{enumerate}
\end{lemma}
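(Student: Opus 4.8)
The plan is to prove (a) first and then derive (b) from (a) together with \Cref{lem:conforming-companion}.a. For the lower bound in (a), I would combine the equivalence $\s_T(\I v,\I v)\approx\widetilde\s_T(\I v,\I v)$ from \Cref{lem:equiv-stab} with the definition \eqref{def:alternative-stabilization}: on a single simplex $\widetilde\s_T(\I v,\I v)$ is a sum of scaled $L^2$ norms of $\Pi_T^k(\Pi_\Tcal^kv-\mathcal{R}\I v)$ and, on each face, $\Pi_F^k(\Pi_F^kv-\mathcal{R}\I v|_T)$. Dropping the projections (they only decrease the norm), bounding $\|\bullet\|_{L^2(F)}$ by a discrete trace inequality, and using $\Pi_T^k(v-\mathcal{R}\I v)=\Pi_T^kv-\Pi_T^k\mathcal{R}\I v$, one reduces everything to $h_T^{-1}\|\Pi_T^k(v-\mathcal{R}\I v)\|_{L^2(T)}$. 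Since $\int_T(v-\mathcal{R}\I v)\d x=0$ and $\int_T\D_\mathrm{ss}(v-\mathcal{R}\I v)\d x=0$ by \eqref{def:potential-reconstruction-2}, a Poincaré inequality together with the (second) Korn inequality \eqref{ineq:Korn-inequality} gives $h_T^{-1}\|v-\mathcal{R}\I v\|_{L^2(T)}\lesssim\|\varepsilon(v-\mathcal{R}\I v)\|_{L^2(T)}$, which is the first inequality. (One must also handle the face terms $h_F^{-1/2}\|\Pi_F^k(\Pi_F^kv-\mathcal{R}\I v|_T)\|_{L^2(F)}$: write this as $h_F^{-1/2}\|\Pi_F^k(v-\mathcal{R}\I v)\|_{L^2(F)}$, dominate by $h_F^{-1/2}\|v-\mathcal{R}\I v\|_{L^2(F)}$, and use the trace inequality $\|w\|_{L^2(F)}\lesssim h_T^{-1/2}\|w\|_{L^2(T)}+h_T^{1/2}\|\nabla w\|_{L^2(T)}$ to absorb it into $h_T^{-1}\|v-\mathcal{R}\I v\|_{L^2(T)}+\|\D(v-\mathcal{R}\I v)\|_{L^2(T)}$, then again Poincaré–Korn.)

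For the upper bound in (a) I would use the identity in \Cref{lem:quasi-opt-R-I}, namely $\|\varepsilon(v-\mathcal{R}\I v)\|_{L^2(T)}$ equals the best $L^2$ approximation of $\varepsilon_\pw v$ on $T$ by symmetric gradients of $P_{k+1}$ functions. Restricted to the single simplex $T$, this best-approximation error is bounded by the error of \emph{any} competitor; choosing a competitor whose symmetric gradient is $\Pi_T^k\varepsilon(v)$ (such a competitor exists in $P_{k+1}(T)^n$ on a simplex because $\varepsilon(P_{k+1})$ contains $P_k(T;\mathbb{S})$, which is exactly the content implicitly used in \Cref{rem:RI=Id}) yields $\|\varepsilon(v-\mathcal{R}\I v)\|_{L^2(T)}\le\|(1-\Pi_T^k)\varepsilon(v)\|_{L^2(T)}$; a shape-regularity constant may enter through the argument that such a competitor can be chosen with controlled norm, but no $h_T$ or $\lambda$ dependence appears. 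Part (b) then follows by the triangle inequality $\|\varepsilon(v-\mathcal{J}\I v)\|\le\|\varepsilon_\pw(v-\mathcal{R}\I v)\|+\|\varepsilon_\pw(\mathcal{R}\I v-\mathcal{J}\I v)\|$; the first term is controlled summing the right inequality of (a) over $T\in\Tcal$, and the second term is estimated by \Cref{lem:conforming-companion}.a with $v_h=\I v$, giving $\|\varepsilon_\pw(\mathcal{R}\I v-\mathcal{J}\I v)\|\lesssim\inf_{w\in V}\|\D_\pw(w-\mathcal{R}\I v)\|+|\I v|_\s$. Choosing $w=v$ bounds the infimum by $\|\D_\pw(v-\mathcal{R}\I v)\|\lesssim\|\varepsilon_\pw(v-\mathcal{R}\I v)\|$ (using \Cref{lem:quasi-opt-R-I}), and $|\I v|_\s\lesssim\|\varepsilon_\pw(v-\mathcal{R}\I v)\|$ is the already-established lower bound of (a) summed over $\Tcal$; both are then controlled by $\|(1-\Pi_\Tcal^k)\varepsilon(v)\|$ via the upper bound of (a).

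The main obstacle I anticipate is the lower bound in (a): one has to carefully track that the $L^2$ projections $\Pi_T^k$ and $\Pi_F^k$ appearing inside $\widetilde\s_T$ can be removed at the cost only of shape-regularity constants, and that the face contributions are genuinely absorbed by the volume term after a trace inequality rather than producing a circular estimate. The cleanest route is to note $\s_T(\I v,\I v)\approx\widetilde\s_T(\I v,\I v)\lesssim h_T^{-2}\|\Pi_T^k(v-\mathcal{R}\I v)\|_{L^2(T)}^2+\sum_{F\in\Fcal(T)}h_F^{-1}\|\Pi_F^k(v-\mathcal{R}\I v)\|_{L^2(F)}^2\le h_T^{-2}\|v-\mathcal{R}\I v\|_{L^2(T)}^2+\sum_{F\in\Fcal(T)}h_F^{-1}\|v-\mathcal{R}\I v\|_{L^2(F)}^2$, then apply the trace inequality once to the face terms and finish with the single Poincaré–Korn estimate on $T$ enabled by \eqref{def:potential-reconstruction-2}. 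Everything else is a routine assembly of already-proved facts.
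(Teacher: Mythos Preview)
Your treatment of the lower bound in (a) and of part (b) is correct and essentially matches the paper; the paper in fact dismisses the lower bound as ``well-established'' and proves (b) exactly as you do, via the triangle inequality, \Cref{lem:conforming-companion}.a with $v_h=\I v$, and the already-established (a).

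The gap is in the upper bound of (a). Your argument hinges on the claim that $\varepsilon(P_{k+1}(T)^n)\supseteq P_k(T;\mathbb{S})$, so that one can pick a competitor $v_{k+1}\in P_{k+1}(T)^n$ with $\varepsilon(v_{k+1})=\Pi_T^k\varepsilon(v)$. This inclusion is \emph{false} for $k\ge 2$: a symmetric tensor field lies in the range of $\varepsilon$ only if it satisfies the Saint-Venant compatibility conditions (in 2D, $\partial_{11}E_{22}+\partial_{22}E_{11}-2\partial_{12}E_{12}=0$), which impose nontrivial linear constraints on $P_k(T;\mathbb{S})$ once $k\ge 2$. A dimension count confirms this: in 2D, $\dim\varepsilon(P_{k+1}(T)^2)=(k+2)(k+3)-3$ while $\dim P_k(T;\mathbb{S})=3(k+1)(k+2)/2$, and these differ for $k\ge 2$. \Cref{rem:RI=Id} only uses the \emph{forward} inclusion $\varepsilon(P_{k+1})\subset P_k$, not the reverse you invoke.

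Because no such direct competitor exists, the paper argues by compactness. It sets $X=\{w\in H^1(T)^n/\mathrm{RM}:w\perp P_{k+1}(T)^n\}$ with norm $\|\varepsilon(\bullet)\|_{L^2(T)}$, observes that $\widetilde v\coloneqq v-\mathcal{R}\I v\in X$ with $(1-\Pi_T^k)\varepsilon(\widetilde v)=(1-\Pi_T^k)\varepsilon(v)$, and shows that $A\coloneqq(1-\Pi_T^k)\varepsilon:X\to L^2(T;\mathbb{S})$ is injective: if $\varepsilon(w)\in P_k(T;\mathbb{S})$ then $\D^2 w\in P_{k-1}$ (this is the Saint-Venant-type implication, quoted from \cite[Eq.~(3.16), Ch.~VI]{Braess2007}), so $w\in P_{k+1}(T)^n$ and hence $w=0$ in $X$. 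Since $B\coloneqq\Pi_T^k\varepsilon$ has finite-dimensional range, the Peetre--Tartar lemma gives $\|\varepsilon(w)\|_{L^2(T)}\le C_{\mathrm{stab}}(T)\|(1-\Pi_T^k)\varepsilon(w)\|_{L^2(T)}$ on $X$. A final (and nontrivial) step argues that $C_{\mathrm{stab}}(T)$ is uniform over all $T\in\Tcal\in\mathbb{T}$ by exploiting that newest-vertex bisection produces only finitely many congruence classes of simplices and that the constant is invariant under translation and scaling. Your plan needs this, or an equivalent, mechanism to close the upper bound.
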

\begin{proof}[Proof of \Cref{lem:stability-Galerkin}.a]
	Since the first inequality is well-established, the focus is on the second one, which is an extension of the stability estimate in \cite[Theorem 3.1]{CarstensenZhaiZhang2020}. 
	The arguments therein are sketched below for completeness.
	Let
	\begin{align*}
		H^1(T)^n/\mathrm{RM} \coloneqq \{w \in H^1(T)^n : \int_T w \d{x} = 0 \text{ and } \int_T \D_\mathrm{ss} w \d{x} = 0\}.
	\end{align*}
	We define the Banach spaces $X \coloneqq \{v \in H^1(T)^n/\mathrm{RM}: v \perp P_{k+1}(T)^n\}$
	endowed with the scalar product $(\varepsilon \bullet, \varepsilon \bullet)_{L^2(T)}$;
	$Y \coloneqq L^2(T;\mathbb{S})$ and $Z \coloneqq P_k(T;\mathbb{S})$ endowed with the $L^2$ scalar product.
	Observe, for any $v \in H^1(T)^n$, that $\widetilde{v} \coloneqq v - \mathcal{R} \I v \in X$ from the convention \eqref{def:potential-reconstruction-2} and \eqref{ineq:ba-RI}. Since $(1 - \Pi_{T}^k) \varepsilon (\widetilde{v}) = (1 - \Pi_T^k) \varepsilon (v)$ from $\varepsilon (P_{k+1}(T)^n) \subset P_k(T;\mathbb{S})$, it suffices to show
	\begin{align}\label{ineq:stab-H1-X}
		\|\varepsilon (v)\|_{L^2(T)} \leq C_{\mathrm{stab}}(T)\|(1 - \Pi_T^k) \varepsilon (v)\|_{L^2(T)} \quad\text{for any } v \in X
	\end{align}
	with a positive constant $C_{\mathrm{stab}}(T) > 0$.
	The operators $A \coloneqq (1 - \Pi_T^k) \varepsilon : X \to Y$ and $B \coloneqq \Pi_T^k \varepsilon : X \to Z$ are linear and bounded. If $A v = 0$, then $\varepsilon (v) \in Z$. From \cite[Eq.~(3.16) in Chapter VI]{Braess2007}, we infer that
	$\D^2 v$ is a tensor-valued polynomial of degree at most $k-1$. Consequently, $v \in P_{k+1}(T)^n$, but the orthogonality $v \perp P_{k+1}(T)^n$ in $X$ reveals $v = 0$. 
	This implies the injectivity of $A$.
	Since $Z$ is a finite dimensional space, $B$ is a compact operator. The Pythagoras theorem provides
	\begin{align*}
		\|v\|_X^2 = \|\varepsilon(v)\|_{L^2(T)}^2 \leq \|(1 - \Pi_T^k) \varepsilon(v)\|_{L^2(T)}^2 + \|\Pi_T^k\varepsilon(v)\|_{L^2(T)}^2 = \|A v\|_Y^2 + \|B v\|_Z^2.
	\end{align*}
	Therefore, the Peetre--Tartar theorem \cite[Theorem 2.1]{GiraultRaviart1986} leads to \eqref{ineq:stab-H1-X} with a constant $C_\mathrm{stab}(T)$ that may depend on $T$ and $k$.
	It remains to prove that $C_\mathrm{stab}(T)$ is uniformly bounded for all $T \in \cup \mathbb{T}$.
	We recall that a congruent mapping $\Phi$ is of the form $\Phi(x) = \alpha(A x + b)$ with a positive number $\alpha > 0$, a vector $b \in \R^n$, and an orthonormal matrix $A$. It is well known \cite{Maubach1995,Stevenson2008} that the number of congruent classes in $\mathbb{T}$ is finite, i.e., there is a finite subset $\mathcal{M}$ of $\cup \mathbb{T}$ such that any $T \in \cup \mathbb{T}$ can be written as $T = \Phi(K)$ with some $K \in \mathcal{M}$ and congruent mapping $\Phi$.
	Theorem 4.1 from \cite{Maubach1995} shows that there exist finitely many (one-dimensional) lines through zero such that, given any simplex $T \in \mathbb{T}$ with ordered vertices $x_0, \dots, x_{n}$, the edges $x_1 - x_0$, $x_2 - x_0$, \dots, $x_n - x_0$ of $T$ lie on these lines. As a conclusion, it is possible to select finite many orthonormal matrices $A_1, \dots, A_J$ (rotations and reflections) such that any $T \in \cup\mathbb{T}$ satisfies $T = \Phi(K)$ for some $K \in \mathcal{M}$ with a congruent mapping $\Phi(x) = \alpha(A_j x + b)$ for some $\alpha > 0$, $b \in \R^n$, and $1 \leq j \leq J$. Set $\mathcal{S} \coloneqq \cup_{1 \leq j \leq J} A_j \mathcal{M}$, then any admissible simplex $T \in \cup \mathbb{T}$ is the image of some $K \in \mathcal{S}$ under translation and resizing. Notice that the constant $C_\mathrm{stab}(T)$ in \eqref{ineq:stab-H1-X} is invariant under these mappings. Since $\mathcal{S}$ is finite, $C_\mathrm{stab} \coloneqq \max_{T \in \mathcal{S}} C_\mathrm{stab}(T)$ is a uniform upper bound.
\end{proof}

\begin{proof}[Proof of \Cref{lem:stability-Galerkin}.b]
	In view of \eqref{ineq:stability}, it remains to prove
	\begin{align}\label{ineq:proof-stability}
		\|\varepsilon_\pw(\mathcal{J} \I v - \mathcal{R} \I v)\| \lesssim \|(1 - \Pi_{\Tcal}^k) \varepsilon(v)\|
	\end{align}
	thanks to a triangle inequality.
	\Cref{lem:conforming-companion} with $v_h = \I v$ and \Cref{lem:quasi-opt-R-I} provide
	\begin{align*}
		\|\varepsilon_\pw(\mathcal{J} \I v - \mathcal{R} \I v)\| \lesssim \|\D_\pw(v - \mathcal{R} \I v)\| + |\I v|_{\widehat{\s}} \lesssim \|\varepsilon_\pw(v - \mathcal{R} \I v)\| + |\I v|_{\widehat{\s}}.
	\end{align*}
	The combination of this with \eqref{ineq:stability} concludes the proof of \eqref{ineq:stability-2}.
\end{proof}

\subsection{tr-dev-div lemma}
We state a general version of the tr-dev-div lemma.
\begin{lemma}[tr-dev-div]\label{lem:tr-div-dev}
	Suppose (\ref{assumption}), then (a)--(d) hold.
	\begin{enumerate}[wide]
		\item[(a)] The subspace $\Sigma_0$ does not contain the identity matrix $\mathrm{I}_{n \times n} \notin \Sigma_0$.
		\item[(b)] Given $0 \leq s \leq 1$, any $\tau \in \Sigma_0$ satisfies
		\begin{align}\label{ineq:tr-dev-div}
			C_{\mathrm{tdd}}^{-1}\|\tr\,\tau\|_{H^s(\Omega)} \leq \|\mathrm{dev}\,\tau\|_{H^s(\Omega)} + \|\div\,\tau\|_{H^{s-1}(\Omega)}.
		\end{align}
		The constant $C_{\mathrm{tdd}}$ exclusively depends on $\Sigma_0$ and $s$.
		In particular,
		\begin{align}\label{ineq:L2-dev-div}
			\|\tau\|^2 \lesssim \|\mu^{1/2}\tau\|^2_{\mathbb{C}^{-1}} + \|\div\, \tau\|_{-1}^2.
		\end{align}
		\item[(c)] $\sigma - \sigma_h \in \Sigma_0$.
		\item[(d)] If $\psi \in V$ solves $(\varepsilon (\psi), \varepsilon (\varphi))_{\mathbb{C}} = (\sigma - \sigma_h, \varepsilon (\varphi))_{L^2(\Omega)}$ for any $\varphi \in V$, then $\mathbb{C} \varepsilon(\psi) \in \Sigma_0$.
	\end{enumerate}
\end{lemma}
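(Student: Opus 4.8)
\textbf{Proof proposal for \Cref{lem:tr-div-dev}.}

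The plan is to treat the four parts in the order (b) $\Rightarrow$ (a), then (c), then (d), since the quantitative estimate \eqref{ineq:tr-dev-div} is the analytic heart and the remaining claims are algebraic consequences of the definition of $\Sigma_0$. For part (b), I would follow the classical route behind the tr-dev-div (or Arnold--Falk--Winther / Ne\v{c}as) lemma: start from the decomposition $\tau = \mathrm{dev}\,\tau + \frac1n(\tr\,\tau)\,\mathrm{I}_{n\times n}$, so that $\div\,\tau = \div(\mathrm{dev}\,\tau) + \frac1n \nabla(\tr\,\tau)$ and hence $\nabla(\tr\,\tau) = n\,\div\,\tau - n\,\div(\mathrm{dev}\,\tau)$ in $H^{s-1}(\Omega)$. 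The Ne\v{c}as inequality on the Lipschitz domain $\Omega$ controls $\|q\|_{H^s(\Omega)}$ by $\|q\|_{H^{s-1}(\Omega)} + \|\nabla q\|_{H^{s-1}(\Omega)}$ for $0 \le s \le 1$, applied to $q = \tr\,\tau$; combined with the previous identity this gives $\|\tr\,\tau\|_{H^s} \lesssim \|\tr\,\tau\|_{H^{s-1}} + \|\mathrm{dev}\,\tau\|_{H^s} + \|\div\,\tau\|_{H^{s-1}}$. The remaining task is to absorb the lower-order term $\|\tr\,\tau\|_{H^{s-1}}$, and this is where the constraint defining $\Sigma_0$ enters: a compactness/Peetre--Tartar argument (exactly as in \Cref{lem:stability-Galerkin}.a) removes that term provided no nonzero $\tau \in \Sigma_0$ has $\mathrm{dev}\,\tau = 0$ and $\div\,\tau = 0$; but $\mathrm{dev}\,\tau = 0$ forces $\tau = c\,\mathrm{I}_{n\times n}$, and $\div\,\tau = 0$ then makes $c$ constant, and the definition \eqref{def:Sigma-0} of $\Sigma_0$ (either $\int_\Omega \tr\,\tau = 0$ or $(\tau,\varepsilon(\varphi_0))_{L^2(\Omega)} = 0$ with $\int_\Omega \div\,\varphi_0 \ne 0$, noting $(\mathrm{I}_{n\times n},\varepsilon(\varphi_0))_{L^2} = \int_\Omega \div\,\varphi_0$) forces $c = 0$. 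This same computation proves (a): $\mathrm{I}_{n\times n} \notin \Sigma_0$ precisely because $\int_\Omega \tr\,\mathrm{I}_{n\times n} = n|\Omega| \ne 0$ in the first case and $(\mathrm{I}_{n\times n},\varepsilon(\varphi_0))_{L^2} = \int_\Omega \div\,\varphi_0 \ne 0$ in the second. The inequality \eqref{ineq:L2-dev-div} then follows by taking $s = 0$ in \eqref{ineq:tr-dev-div}, using $\|\tau\|^2 = \|\mathrm{dev}\,\tau\|^2 + \frac1n\|\tr\,\tau\|^2$, together with the elementary $\lambda$-independent bounds $\|\mathrm{dev}\,\tau\| \approx \|\mathrm{dev}\,\tau\|_{\mathbb{C}^{-1}} \le \|\tau\|_{\mathbb{C}^{-1}}$ (valid since $\mathbb{C}^{-1}$ acts on deviatoric tensors by the $\lambda$-free factor $1/(2\mu)$) and $\|\div\,\tau\|_{H^{-1}(\Omega)} = \|\div\,\tau\|_{-1}$.

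For (c), I would write $\sigma - \sigma_h = \mathbb{C}\varepsilon(u) - \mathbb{C}\varepsilon_h u_h$ and test the continuous problem \eqref{def:continuous-problem} against $v = \varphi_0 \in S^k_\mathrm{D}(\Tcal_0) \subset S^k_\mathrm{D}(\Tcal) \subset V$ (using $\Tcal_0 \le \Tcal$ so $\varphi_0$ is still piecewise polynomial on $\Tcal$, and $k \ge 1$): this gives $(\sigma, \varepsilon(\varphi_0))_{L^2(\Omega)} = \int_\Omega f\cdot\varphi_0 + \int_{\Gamma_\mathrm{N}} g\cdot\varphi_0$. On the discrete side I use the key orthogonality $\sigma - \sigma_h \perp \varepsilon(P_{k+1}(\Tcal)^n \cap V)$ advertised in the introduction — more precisely, for a conforming test function $\varphi_0 \in S^{k}_\mathrm{D}(\Tcal) \subset S^{k+1}_\mathrm{D}(\Tcal)$ one has $\varepsilon(\varphi_0) \in P_k(\Tcal;\mathbb{S})$, so by \Cref{rem:RI=Id} $\varepsilon_h \I\varphi_0 = \varepsilon(\varphi_0)$, and plugging $v_h = \I\varphi_0$ into the discrete problem \eqref{def:discrete-problem} with $|\I\varphi_0|_\s = 0$ (as $\varphi_0 \in S^{k+1}_\mathrm{D}(\Tcal) \subset \mathrm{CR}^{k+1}_\mathrm{D}(\Tcal)$, so \Cref{cor:kernel-stabilization} applies) yields $(\sigma_h, \varepsilon(\varphi_0))_{L^2(\Omega)} = (\mathbb{C}\varepsilon_h u_h, \varepsilon_h\I\varphi_0)_{L^2(\Omega)} = \int_\Omega f\cdot\Pi_\Tcal^k\varphi_0 + \int_{\Gamma_\mathrm{N}} g\cdot\Pi_\Fcal^k\varphi_0$. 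Subtracting and invoking $\Pi_\Tcal^k\varphi_0 = \varphi_0$, $\Pi_F^k\varphi_0 = \varphi_0$ (as $\varphi_0$ is already a polynomial of degree $\le k$ on each cell and face) gives $(\sigma - \sigma_h, \varepsilon(\varphi_0))_{L^2(\Omega)} = 0$, i.e. $\sigma - \sigma_h \in \Sigma_0$ in the Neumann case; in the pure-Dirichlet case $\Gamma_\mathrm{N} = \emptyset$ one instead tests with any $v \in V$ and uses that $\int_\Omega \tr(\sigma - \sigma_h)\d{x}$ equals a multiple of $(\sigma - \sigma_h, \varepsilon(v))$ for a suitable $v$ — concretely, $\int_\Omega \tr\,\tau\d{x} = \int_\Omega \tau : \mathrm{I}_{n\times n}\d{x}$ and one picks $v$ with $\varepsilon(v) = \frac1n\mathrm{I}_{n\times n}$ near the relevant region, or more cleanly notes that the Galerkin orthogonality already gives $(\sigma - \sigma_h, \varepsilon(v_h))$-type identities from which the zero-trace condition is read off. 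Part (d) is immediate: by construction $(\mathbb{C}\varepsilon(\psi), \varepsilon(\varphi))_{L^2(\Omega)} = (\mathbb{C}\varepsilon(\psi), \varepsilon(\varphi))_{\mathbb{C}^0}$... rather, $\mathbb{C}\varepsilon(\psi)$ tested against $\varepsilon(\varphi_0)$ gives $(\mathbb{C}\varepsilon(\psi), \varepsilon(\varphi_0))_{L^2(\Omega)} = (\varepsilon(\psi), \varepsilon(\varphi_0))_\mathbb{C} = (\sigma - \sigma_h, \varepsilon(\varphi_0))_{L^2(\Omega)} = 0$ by (c), and similarly $\int_\Omega \tr(\mathbb{C}\varepsilon(\psi))\d{x} = 0$ in the Dirichlet case by the same test; hence $\mathbb{C}\varepsilon(\psi) \in \Sigma_0$.

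The main obstacle I anticipate is part (b), specifically the Ne\v{c}as inequality on a general polyhedral Lipschitz domain for the full range $0 \le s \le 1$ and its coupling with the compactness argument. One must be careful that the constant $C_\mathrm{tdd}$ genuinely depends only on $\Sigma_0$ (equivalently on $\Omega$, and on $\Tcal_0$ and $\varphi_0$ through the single linear constraint) and not on $\lambda$ or the mesh-size of the refined triangulation $\Tcal$ — this is what makes the subsequent a priori analysis $\lambda$-robust. The compactness step is standard Peetre--Tartar once injectivity of $(1-\Pi)$-type operators on $\Sigma_0$ is established, but verifying that the kernel computation (that $\mathrm{dev}\,\tau = 0$, $\div\,\tau = 0$, and $\tau \in \Sigma_0$ imply $\tau = 0$) is exactly the content used to normalize away the lower-order term requires matching the two cases of \eqref{def:Sigma-0} with the hypothesis in \Cref{assumption} that $\int_\Omega \div\,\varphi_0\d{x} \ne 0$; I would present this kernel computation once and reuse it for (a). The remaining parts (a), (c), (d) are short and essentially bookkeeping once the test-function $\varphi_0$ and the orthogonality machinery of \Cref{rem:RI=Id} and \Cref{cor:kernel-stabilization} are in place.
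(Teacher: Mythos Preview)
Your treatment of (a), (b), and the Neumann cases of (c) and (d) is essentially correct and matches the paper's approach; for (b) the paper simply cites the literature (Boffi--Brezzi--Fortin for $s=0$ and Carstensen--Heuer for fractional $s$) rather than sketching the Ne\v{c}as/Peetre--Tartar route, but your outline is the standard way those results are proved and the kernel computation you describe is exactly how (a) is obtained.

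The genuine gap is part (c) in the pure Dirichlet case $\Gamma_\mathrm{N} = \emptyset$. Your proposal to find $v \in V = H^1_0(\Omega)^n$ with $\varepsilon(v) = \tfrac{1}{n}\mathrm{I}_{n\times n}$ cannot work: any such $v$ would satisfy $\div\,v \equiv 1$, but the divergence theorem forces $\int_\Omega \div\,v\,\mathrm{d}x = 0$ for $v \in H^1_0$. The vague fallback to ``Galerkin orthogonality'' does not rescue this, because the constraint defining $\Sigma_0$ here is $\int_\Omega \tr\,\tau\,\mathrm{d}x = 0$, which is precisely \emph{not} of the form $(\tau,\varepsilon(v))_{L^2} = 0$ for any $v \in V$ (that is the whole content of $\mathrm{I}_{n\times n}\notin \varepsilon(V)$). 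The paper's argument bypasses test functions entirely and computes directly: since $\lambda,\mu$ are constant in this case, $\int_\Omega \tr\,\sigma\,\mathrm{d}x$ is a nonzero multiple of $\int_\Omega \div\,u\,\mathrm{d}x = 0$ (as $u \in H^1_0$), while $\int_\Omega \tr\,\sigma_h\,\mathrm{d}x$ is the same multiple of $(\varepsilon_h u_h,\mathrm{I}_{n\times n})_{L^2(\Omega)}$, which vanishes by inserting the constant $\Phi_k = \mathrm{I}_{n\times n}$ into the defining identity \eqref{def:sym-grad-rec} of $\mathcal{G}$ --- the volume term disappears because $\div_\pw\,\mathrm{I}_{n\times n}=0$, interior jumps of $\mathrm{I}_{n\times n}$ vanish, and on every boundary face $u_F = 0$. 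The Dirichlet case of (d) is handled by the same direct device, not ``by the same test'': $\int_\Omega \tr(\mathbb{C}\varepsilon(\psi))\,\mathrm{d}x$ is a nonzero multiple of $\int_\Omega \div\,\psi\,\mathrm{d}x = 0$ because $\psi \in H^1_0$.
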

\begin{proof}[Proof of \Cref{lem:tr-div-dev}.a]
	The case $|\Gamma_{\mathrm{N}}| = 0$ is trivial. If $|\Gamma_{\mathrm{N}}| > 0$, then the function $\varphi_0 \in S^k_\mathrm{D}(\Tcal_0)$ from \eqref{assumption} satisfies $\int_\Omega \I_{n \times n} : \varepsilon(\varphi_k) \d{x} = \int_\Omega \div \varphi_k \d{x} \neq 0$. Hence, $\I_{n \times n} \notin \Sigma_0$.\vspace*{0.2cm}
	
	\noindent\emph{Proof of \Cref{lem:tr-div-dev}.b.}
	There are several variants of the tr-dev-div lemma known in the literature for $s = 0$, e.g., \cite[Proposition 9.1.1]{BoffiBrezziFortin2013}.
	This version \eqref{ineq:tr-dev-div} with fractional-order Sobolev norms is recently established in \cite[Theorem 1]{CarstensenHeuer2024}.
	The bound \eqref{ineq:L2-dev-div} follows from \eqref{ineq:tr-dev-div} with $s = 0$ and the algebraic inequality $|\tau|^2 = |\mathrm{dev} \tau|^2 + |\tr \tau|^2/n \leq 2\mu|\mathbb{C}^{-1/2}\tau|^2 + |\tr \tau|^2/n$ pointwise a.e.~in $\Omega$.\vspace*{0.2cm}
	
	\noindent\emph{Proof of \Cref{lem:tr-div-dev}.c.}
	Let $|\Gamma_{\mathrm{N}}| = 0$. Recall that $\lambda$ and $\mu$ are constant.
	The choice $\Phi_k \coloneqq \I_{n \times n}$ in \eqref{def:sym-grad-rec} and $u_\Fcal = 0$ along $\Gamma_{\mathrm{D}} = \partial \Omega$ provide
	\begin{align*}
		(\varepsilon_h u_h, \I_{n \times n})_{L^2(\Omega)} = \sum\nolimits_{F \in \Fcal(\partial \Omega)} \int_F u_F \cdot \nu_F \d{s} = 0. 
	\end{align*}
	This and an integration by parts show
	\begin{align}\label{eq:tr-sigma-sigma-h}
		(2\mu + \lambda)^{-1}\int_\Omega \tr (\sigma - \sigma_h) \d{x} &= \int_\Omega \div\, u \d{x} - (\varepsilon_h u_h, \I_{n \times n})_{L^2(\Omega)}\nonumber\\
		&= \int_{\Gamma_\mathrm{D}} (u - u_\Fcal) \cdot \nu \d{s} = 0,
	\end{align}
	whence $\sigma - \sigma_h \in \Sigma_0$.
	Consider the case $|\Gamma_{\mathrm{N}}| > 0$.
	Recall $\s(u_h, \I \varphi_0) = 0$ from \Cref{cor:kernel-stabilization} and $\varepsilon_h \I \varphi_0 = \varepsilon(\varphi_0)$ from \Cref{rem:RI=Id} with $\varphi_{0} \in S^{k}_\mathrm{D}(\Tcal_0) \subset S^{k}_\mathrm{D}(\Tcal)$ from \eqref{assumption}.
	The variational formulations \eqref{def:continuous-problem} and \eqref{def:discrete-problem}  provide
	\begin{align}\label{eq:proof-reliability-sigma-sigma-h-constant-free}
		(\sigma - \sigma_h, \varepsilon (\varphi_0))_{L^2(\Omega)} = (\sigma, \varepsilon(\varphi_0))_{L^2(\Omega)} - (\sigma_h, \varepsilon_h \I \varphi_0)_{L^2(\Omega)} = 0.
	\end{align}
	
	\noindent\emph{Proof of \Cref{lem:tr-div-dev}.d.}
	If $|\Gamma_{\mathrm{N}}| = 0$, then $\int_\Omega \tr\, \mathbb{C} \varepsilon(\psi) \d{x} = (2\mu + \lambda)\int_\Omega \div\, \psi \d{x} = 0$ from an integration by parts formula.
	If $|\Gamma_{\mathrm{N}}| > 0$, then the $L^2$ orthogonality $\mathbb{C} \varepsilon(\psi) \perp \varepsilon(S_\mathrm{D}^k(\Tcal))$ by design of $\psi$ and $\sigma - \sigma_h \in \Sigma_0$ from (c) show $\mathbb{C} \varepsilon(\psi) \in \Sigma_0$.
\end{proof}

We note that $\|\div(\sigma - \sigma_h)\|_{-1}$ arising in \Cref{lem:tr-div-dev}.b can be bounded as follows.

\begin{lemma}[bound $\|\div(\sigma - \sigma_h)\|_{-1}$]\label{lem:divergence-stress}
	The discrete stress $\sigma_h$ satisfies
	\begin{align*}
		\|\div(\sigma - \sigma_h)\|_{-1} \lesssim \mathrm{osc}(f,\Tcal) + \mu_1^{1/2}|u_h|_\s.
	\end{align*}
\end{lemma}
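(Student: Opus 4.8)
The plan is to bound the dual norm $\|\div(\sigma-\sigma_h)\|_{-1}$ by testing against an arbitrary $v\in H^1_0(\Omega)^n$ with $\|\nabla v\|=1$ and estimating $(\div(\sigma-\sigma_h))(v) = -(\sigma-\sigma_h,\nabla v)_{L^2(\Omega)}$. Because $\sigma-\sigma_h$ is symmetric (both $\sigma=\mathbb{C}\varepsilon(u)$ and $\sigma_h=\mathbb{C}\varepsilon_h u_h$ take values in $\mathbb{S}$), the antisymmetric part of $\nabla v$ drops out and we may replace $\nabla v$ by $\varepsilon(v)$. The natural strategy is to compare $v$ with its interpolant: writing $(\sigma-\sigma_h,\varepsilon(v))_{L^2(\Omega)} = (\sigma,\varepsilon(v))_{L^2(\Omega)} - (\sigma_h,\varepsilon(v))_{L^2(\Omega)}$ and using the continuous equation \eqref{def:continuous-problem} on the first term (with $g$ irrelevant here since $v\in H^1_0$, but more precisely $f\in L^2(\Omega)^n$ only) gives $(\sigma,\varepsilon(v))_{L^2(\Omega)} = \int_\Omega f\cdot v\d{x}$.

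\textbf{Key steps.} First, use the commuting-diagram property of \Cref{lem:commuting-diagram}, namely $\Pi_\Tcal^k\varepsilon(v)=\varepsilon_h\I v$, together with the discrete equation \eqref{def:discrete-problem}, to rewrite $(\sigma_h,\varepsilon(v))_{L^2(\Omega)} = (\sigma_h,\Pi_\Tcal^k\varepsilon(v))_{L^2(\Omega)} = (\sigma_h,\varepsilon_h\I v)_{L^2(\Omega)} = a_h(u_h,\I v) - \s(u_h,\I v) = \int_\Omega f\cdot\Pi_\Tcal^k v\d{x} + \int_{\Gamma_\mathrm{N}} g\cdot(\Pi_\Fcal^k v)\d{s} - \s(u_h,\I v)$. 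Since $v\in H^1_0(\Omega)^n$ vanishes on $\partial\Omega\supset\Gamma_\mathrm{N}$, the interpolant $\I v$ has zero facet components on $\Gamma_\mathrm{N}$ as well (indeed $\Pi_F^k v=0$ for $F\in\Fcal_\mathrm{D}$ trivially, and the Neumann term vanishes because... in fact $v|_{\Gamma_\mathrm{N}}=0$ so $\Pi_F^k v|_F = 0$ there too), so the Neumann contribution drops. Subtracting, $(\sigma-\sigma_h,\varepsilon(v))_{L^2(\Omega)} = \int_\Omega f\cdot(1-\Pi_\Tcal^k)v\d{x} + \s(u_h,\I v)$.

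\textbf{Finishing the bound.} For the first term, use $\int_\Omega f\cdot(1-\Pi_\Tcal^k)v\d{x} = \int_\Omega (1-\Pi_\Tcal^k)f\cdot(1-\Pi_\Tcal^k)v\d{x} \le \|h_\Tcal(1-\Pi_\Tcal^k)f\|\,\|h_\Tcal^{-1}(1-\Pi_\Tcal^k)v\| \lesssim \mathrm{osc}(f,\Tcal)\,\|\nabla v\| = \mathrm{osc}(f,\Tcal)$ by the elementwise Poincaré inequality. For the stabilization term, apply a Cauchy--Schwarz inequality in the $\s$-seminorm: $\s(u_h,\I v)\le |u_h|_\s\,|\I v|_\s$, and then invoke \Cref{lem:stability-Galerkin}.a (summed over $T\in\Tcal$) to get $|\I v|_\s \lesssim \|(1-\Pi_\Tcal^k)\varepsilon(v)\| \le \|\varepsilon(v)\| \le \|\nabla v\| = 1$. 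Collecting the two bounds and taking the supremum over such $v$ yields $\|\div(\sigma-\sigma_h)\|_{-1}\lesssim \mathrm{osc}(f,\Tcal)+|u_h|_\s$.

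\textbf{Main obstacle.} The delicate point is the clean treatment of the facet contributions: one must verify that testing the discrete equation with $\I v$ for $v\in H^1_0(\Omega)^n$ indeed produces exactly $\int_\Omega f\cdot\Pi_\Tcal^k v\d{x}$ with \emph{no} surviving Neumann boundary term (this uses $v=0$ on $\partial\Omega$, hence on $\Gamma_\mathrm{N}$, so $\Pi_F^k v=0$ on each $F\in\Fcal_\mathrm{N}$), and that the substitution $(\sigma_h,\varepsilon(v))=(\sigma_h,\varepsilon_h\I v)$ is licit via \Cref{lem:commuting-diagram} applied to $v\in V$ (which requires $v\in V$, satisfied since $H^1_0(\Omega)^n\subset V$). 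Everything else — Poincaré, Cauchy--Schwarz, the scaling estimate for $|\I v|_\s$ — is routine.
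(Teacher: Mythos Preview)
Your proof is correct and follows essentially the same route as the paper: test the identity $(\sigma-\sigma_h,\varepsilon(\varphi))_{L^2(\Omega)}=(f,(1-\Pi_\Tcal^k)\varphi)_{L^2(\Omega)}+\s(u_h,\I\varphi)$, obtained from the commuting diagram $\Pi_\Tcal^k\varepsilon(\varphi)=\varepsilon_h\I\varphi$ together with the continuous and discrete equations (and the vanishing of the Neumann term for $\varphi\in H^1_0(\Omega)^n$), then bound the two pieces by Poincar\'e/Cauchy and by $|\I\varphi|_\s\lesssim\|\varepsilon(\varphi)\|$ from \Cref{lem:stability-Galerkin}.a. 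The paper's argument is identical in substance, with the same key identity appearing as \eqref{ineq:data-osc}.
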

\begin{proof}
	Given any $\varphi \in V$,
	\Cref{lem:commuting-diagram} and \eqref{def:discrete-problem} imply
	\begin{align*}
		(\sigma_h, \varepsilon(\varphi))_{L^2(\Omega)} = (\sigma_h, \varepsilon_h \I \varphi)_{L^2(\Omega)} = (f, \Pi_{\Tcal}^k \varphi)_{L^2(\Omega)} + (g, \Pi_\Fcal^k \varphi)_{L^2(\Gamma_{\mathrm{N}})} - \s(u_h,\I \varphi).
	\end{align*}
	This and the variational formulation \eqref{def:continuous-problem} provide
	\begin{align}\label{ineq:data-osc}
		(\sigma - \sigma_h, \varepsilon(\varphi))_{L^2(\Omega)} &= (f,(1 - \Pi_{\Tcal}^k)\varphi)_{L^2(\Omega)}\nonumber\\
		&\qquad\qquad + (g, (1 - \Pi_\Fcal^k) \varphi)_{L^2(\Gamma_{\mathrm{N}})} + \s(u_h,\I \varphi).
	\end{align}
	Standard arguments with Cauchy and Poincar\'e inequalities lead to
	\begin{align}
		(\sigma - \sigma_h, \varepsilon(\varphi))_{L^2(\Omega)} \lesssim \mathrm{osc}(f,\Tcal)\|\D \varphi\| + |u_h|_\s |\I \varphi|_\s
	\end{align}
	for any $\varphi \in H^1_0(\Omega)^n$.
	Since $|\I \varphi|_\s \lesssim \mu_1^{1/2}\|\varepsilon(\varphi)\| \leq \mu_1^{1/2}\|\D \varphi\|$ from \eqref{ineq:stability}, this and the definition \eqref{def:dual-norm} of $\|\bullet\|_{-1}$ conclude the assertion.
\end{proof}

\subsection{Proof of \Cref{thm:best-approximation}}\label{sec:proof-a-priori}
The proof is divided into two parts. The first part establishes the quasi-best approximation estimate
\begin{align}\label{ineq:ba-discrete-bilinear-form}
	\|e_h\|_{a_h} + |u_h|_\s \lesssim \mu_0^{-1/2}(\|(1 - \Pi_{\Tcal}^k) \sigma\| + \mathrm{osc}(f,\mathcal{T}) + \mathrm{osc}(g,\mathcal{F}_\mathrm{N}))
\end{align}
for the discrete error $\|e_h\|_{a_h}$ with the abbreviation $e_h = (e_\Tcal, e_\Fcal) \coloneqq \I u - u_h \in V_h$, while the second part controls the $L^2$ error of $\sigma - \sigma_h$, namely
\begin{align}\label{ineq:ba-L2-error}
	\|\sigma - \sigma_h\| \lesssim (\mu_1/\mu_0)^{1/2}(\|(1 - \Pi_{\Tcal}^k) \sigma\| + \mathrm{osc}(f,\mathcal{T}) + \mathrm{osc}(g,\mathcal{F}_\mathrm{N})).
\end{align}
\noindent 3.5.1. \emph{Proof of \eqref{ineq:ba-discrete-bilinear-form}.}
From the identity
$- 2\s(u_h, e_h) = |e_h|_\s^2 + |u_h|_\s^2 - |\I u|_\s^2$, we deduce that
\begin{align}\label{eq:proof-best-approx-split}
	\|\varepsilon_h e_h\|_\mathbb{C}^2 &+ (|e_h|_\s^2 + |u_h|_\s^2 - |\I u|_\s^2)/2\nonumber\\
	& = \|e_h\|_{a_h}^2 - \s(\I u, e_h) = (\varepsilon_h\I u, \varepsilon_h e_h)_{\mathbb{C}} - a_h(u_h, e_h).
\end{align}
The formula $\I \circ \mathcal{J} = \mathrm{Id}$ from \Cref{lem:conforming-companion} provides $(\varepsilon_h \I u, \varepsilon_h e_h)_{\mathbb{C}} = (\varepsilon_h \I u, \varepsilon (\mathcal{J} e_h))_{\mathbb{C}}$.
This, $\sigma = \mathbb{C} \varepsilon(u)$, and $\Pi_\Tcal^k \sigma = \mathbb{C} \varepsilon_h \I u$ from \Cref{lem:commuting-diagram}
imply
\begin{align}
	\label{eq:proof-best-approximation-1}
	(\varepsilon_h \I u, \varepsilon_h e_h)_{\mathbb{C}} = (\sigma, \varepsilon (\mathcal{J} e_h))_{L^2(\Omega)}
	- ((1 - \Pi_\Tcal^k) \sigma, \varepsilon (\mathcal{J} e_h))_{L^2(\Omega)}.
\end{align}
Observe $(\sigma_h, \varepsilon_h e_h)_{L^2(\Omega)} = (\sigma_h, \varepsilon(\mathcal{J} e_h))_{L^2(\Omega)}$ from \Cref{rem:orthogonality-disc} to verify
\begin{align*}
	(\sigma, \varepsilon (\mathcal{J} e_h))_{L^2(\Omega)} - a_h(u_h, e_h) = (\sigma - \sigma_h, \varepsilon(\mathcal{J} e_h))_{L^2(\Omega)} - \s(u_h,e_h).
\end{align*}
The right-hand side of this is given by \eqref{ineq:data-osc} with $\varphi \coloneqq \mathcal{J} e_h$. Thus, standard arguments with Cauchy, trace, Poincar\'e, and Korn inequalities provide
\begin{align}\label{ineq:proof-ba-osc}
	(\sigma, \varepsilon (\mathcal{J} e_h))_{L^2(\Omega)} - a_h(u_h, e_h) \lesssim (\mathrm{osc}(f,\Tcal) + \mathrm{osc}(g,\Fcal_\mathrm{N}))\|\varepsilon(\mathcal{J} e_h)\|.
\end{align}
Since $\mu_0^{1/2}\|\varepsilon (\mathcal{J} e_h)\| \lesssim \|e_h\|_{a_h}$ from \Cref{lem:conforming-companion}.b,
the combination of \eqref{eq:proof-best-approx-split}--\eqref{ineq:proof-ba-osc} results in
\begin{align}\label{ineq:proof-ba-step-1}
	&\|\varepsilon_h e_h\|_\mathbb{C} + |e_h|_\s + |u_h|_\s\nonumber\\
	&\qquad \lesssim \mu_0^{-1/2}\|(1 - \Pi_{\Tcal}^k) \sigma\| + |\I u|_\s + \mu_0^{-1/2}\mathrm{osc}(f,\Tcal) + \mu_0^{-1/2}\mathrm{osc}(g,\Fcal_\mathrm{N}).
\end{align}
From \Cref{lem:stability-Galerkin}, we infer
\begin{align*}
	|\I u|_{\s} &\lesssim \|\mu^{1/2}(1 - \Pi_\Tcal^k) \varepsilon(u)\| \lesssim \mu_0^{-1/2} \|(1 - \Pi_\Tcal^k) \sigma\|.
\end{align*}
This and \eqref{ineq:proof-ba-step-1} conclude the proof of \eqref{ineq:ba-discrete-bilinear-form}.\\[-0.5em]

\noindent 3.5.2. \emph{Proof of \eqref{ineq:ba-L2-error}.}
Recall from \Cref{lem:tr-div-dev}.c that $\sigma - \sigma_h \in \Sigma_0$. In view of \eqref{ineq:L2-dev-div}, \Cref{lem:divergence-stress}, and \eqref{ineq:ba-discrete-bilinear-form}, it remains to prove
\begin{align}\label{ineq:ba-sigma-simgah-C-1}
	\|\mu^{1/2}(\sigma - \sigma_h)\|_{\mathbb{C}^{-1}} \lesssim (\mu_1/\mu_0)^{1/2}(\|(1 - \Pi_{\Tcal}^k) \sigma\| + \mathrm{osc}(f,\Tcal) + \mathrm{osc}(g, \Fcal_\mathrm{N}))
\end{align}
for \eqref{ineq:ba-L2-error}.
The Pythagoras theorem with the $L^2$ orthogonality $\mathbb{C}^{-1/2}(\sigma - \mathbb{C} \varepsilon_h \I u) \perp P_k(\mathcal{T})^{n \times n}$ from
\Cref{lem:commuting-diagram} 
proves
\begin{align}\label{ineq:proof-ba-sigma-sigma-h-C-1}
	\|\mu^{1/2}(\sigma - \sigma_h)\|^2_{\mathbb{C}^{-1}} &=  \|\mu^{1/2}(\sigma - \mathbb{C} \varepsilon_h \I u)\|^2_{\mathbb{C}^{-1}} + \|\mu^{1/2}\varepsilon_h e_h\|^2_{\mathbb{C}}\nonumber\\
	&\lesssim \|(1 - \Pi_{T}^k) \sigma\|^2 + \|\mu^{1/2}\varepsilon_h e_h\|^2_{\mathbb{C}}.
\end{align}
This, the bound $\|\varepsilon_h e_h\|_\mathbb{C} \leq \|e_h\|_{a_h}$ from \eqref{def:a-h}, and \eqref{ineq:ba-discrete-bilinear-form} imply \eqref{ineq:ba-sigma-simgah-C-1}. Therefore,
we have established \eqref{ineq:ba-L2-error}.
The combination of \eqref{ineq:ba-discrete-bilinear-form}--\eqref{ineq:ba-L2-error} concludes the proof of \Cref{thm:best-approximation}.\hfill\qed

\subsection{$L^2$ error estimate}
Elliptic regularity allows for $L^2$ error estimates with higher convergence rates in comparison to \Cref{thm:best-approximation}.
We assume that $s > 0$ (index of elliptic regularity) satisfies the following: The solution $z \in H^1_0(\Omega)^n$ to
\begin{align}\label{def:aux-problem}
	-\div\, \mathbb{C} \varepsilon(z) = f \text{ in } \Omega
\end{align}
for any $f \in L^2(\Omega)^n$
satisfies $z \in H^{1+s}(\Omega)^n$ with a regularity index $s > 0$ (depending on the polyhedral domain $\Omega$) and
\begin{align}\label{ineq:elliptic-regularity}
	\|\mu\varepsilon(z)\|_{H^{s}(\Omega)} + \|\lambda\div\,z\|_{H^s(\Omega)} \lesssim c_{\mu,s}\|f\|
\end{align}
with a constant $c_{\mu,s}$ depending on $\mu$ and $s$.
Such regularity results are available for constant parameters $\lambda$ and $\mu$ under additional assumptions. For instance, \eqref{ineq:elliptic-regularity} is established in \cite{BrennerSung1992} with $s = 1$ on convex planar domains.
Singular functions for general polygonal domains with mixed boundary conditions and $g \equiv 0$ have been computed in \cite{Roessle2000} and lead to $z \in H^{1+s}(\Omega)^2$ for some explicit $s > 0$ as well as $\|z\|_{H^{1+s}(\Omega)} \lesssim \|f\|$. Thereafter, \Cref{lem:tr-div-dev} controls the norm $\lambda\|\div z\|_{H^s(\Omega)}$ and provides \eqref{ineq:elliptic-regularity}.

While convergence rates of the $L^2$ error have been derived in \cite[Subsection 5.2]{DiPietroErn2015}, the emphasis here is on quasi-best approximation error estimates.

\begin{theorem}[$L^2$ error estimate]\label{thm:L2-estimate}
	If \eqref{assumption} and \eqref{ineq:elliptic-regularity}, then
	\begin{align}
		\|\Pi_{\Tcal}^k u - u_\Tcal\| \lesssim c_{\mu,s}\mu_0^{-1}h_\mathrm{max}^s(\|(1 - \Pi_\Tcal^k) \sigma\| + \mathrm{osc}(f,\Tcal) + \mathrm{osc}(g,\Fcal_\mathrm{N})).
	\end{align} 
\end{theorem}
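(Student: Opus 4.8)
The plan is to run an Aubin--Nitsche duality argument inside the HHO framework, organised so that every bound stays $\lambda$-robust. First abbreviate $e_h = (e_\Tcal, e_\Fcal) \coloneqq \I u - u_h \in V_h$, so that $e_\Tcal = \Pi_\Tcal^k u - u_\Tcal$ is the quantity to estimate, and write $R \coloneqq \|(1 - \Pi_\Tcal^k)\sigma\| + \mathrm{osc}(f,\Tcal) + \mathrm{osc}(g,\Fcal_\mathrm{N})$ for the right-hand side of \eqref{ineq:a-priori}. I would introduce the dual solution $z \in V$ to $(\mathbb{C}\varepsilon(z), \varepsilon(v))_{L^2(\Omega)} = (e_\Tcal, v)_{L^2(\Omega)}$ for all $v \in V$; by the first Korn inequality \eqref{ineq:Korn-inequality} this is well posed with a $\lambda$-free stability constant, and \eqref{ineq:elliptic-regularity} (with data $e_\Tcal$) gives $z \in H^{1+s}(\Omega)^n$ with $\|z\|_{H^{1+s}(\Omega)} \lesssim \|e_\Tcal\|$ and, by an elementwise polynomial approximation estimate (exploiting that $\lambda,\mu$ are piecewise constant, so $\Pi_\Tcal^k$ commutes elementwise with multiplication by $\mathbb{C}$), also $\|(1 - \Pi_\Tcal^k)\varepsilon(z)\| + \|(1 - \Pi_\Tcal^k)\mathbb{C}\varepsilon(z)\| \lesssim h_\mathrm{max}^s\|e_\Tcal\|$; the last bound is $\lambda$-robust precisely because the $\lambda$-weighted divergence term in \eqref{ineq:elliptic-regularity} absorbs the $\lambda(\div z)\,\mathrm{I}_{n\times n}$ part of $\mathbb{C}\varepsilon(z)$. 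Then, since $\I \circ \mathcal{J} = \mathrm{Id}$ from \Cref{lem:conforming-companion}, comparing first components in $\I \mathcal{J} e_h = e_h$ yields $\Pi_\Tcal^k \mathcal{J} e_h = e_\Tcal$, hence $(e_\Tcal, \mathcal{J} e_h)_{L^2(\Omega)} = \|e_\Tcal\|^2$, and testing the dual problem with $v = \mathcal{J} e_h \in V$ gives $\|e_\Tcal\|^2 = (\mathbb{C}\varepsilon(z), \varepsilon(\mathcal{J} e_h))_{L^2(\Omega)}$.

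The next step is the error representation. I would split $\mathbb{C}\varepsilon(z) = \Pi_\Tcal^k\mathbb{C}\varepsilon(z) + (1 - \Pi_\Tcal^k)\mathbb{C}\varepsilon(z)$; for the projected part, \Cref{lem:commuting-diagram} gives $\Pi_\Tcal^k\mathbb{C}\varepsilon(z) = \mathbb{C}\varepsilon_h\I z$, \Cref{rem:orthogonality-disc} gives $\varepsilon_h e_h = \Pi_\Tcal^k\varepsilon(\mathcal{J}e_h)$, and the self-adjointness of $\mathbb{C}$ then turns it into $(\mathbb{C}\varepsilon_h\I z, \varepsilon_h e_h)_{L^2(\Omega)} = (\mathbb{C}\varepsilon_h e_h, \varepsilon_h\I z)_{L^2(\Omega)} = a_h(e_h, \I z) - \s(e_h, \I z)$ by \eqref{def:a-h}. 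Expanding $a_h(e_h, \I z) = a_h(\I u, \I z) - a_h(u_h, \I z)$, inserting \eqref{def:discrete-problem} for the second term, using \Cref{lem:commuting-diagram} twice to write $a_h(\I u, \I z) = (\Pi_\Tcal^k\sigma, \Pi_\Tcal^k\varepsilon(z))_{L^2(\Omega)} + \s(\I u, \I z)$, the $L^2$-orthogonality $(\Pi_\Tcal^k\sigma, \Pi_\Tcal^k\varepsilon(z)) = (\sigma, \varepsilon(z)) - ((1 - \Pi_\Tcal^k)\sigma, (1 - \Pi_\Tcal^k)\varepsilon(z))$, the continuous problem \eqref{def:continuous-problem} with test function $z$, and $\s(\I u, \I z) - \s(e_h, \I z) = \s(u_h, \I z)$, one arrives at
\begin{align*}
	\|e_\Tcal\|^2 &= ((1 - \Pi_\Tcal^k)\mathbb{C}\varepsilon(z), \varepsilon(\mathcal{J}e_h))_{L^2(\Omega)} - ((1 - \Pi_\Tcal^k)\sigma, (1 - \Pi_\Tcal^k)\varepsilon(z))_{L^2(\Omega)} + \s(u_h, \I z)\\
	&\quad + ((1 - \Pi_\Tcal^k)f, (1 - \Pi_\Tcal^k)z)_{L^2(\Omega)} + \sum_{F \in \Fcal_\mathrm{N}} ((1 - \Pi_F^k)g, (1 - \Pi_F^k)z)_{L^2(F)}.
\end{align*}

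Finally I would estimate the five terms. By \Cref{thm:best-approximation}, $\|e_h\|_{a_h} + |u_h|_\s + \|(1 - \Pi_\Tcal^k)\sigma\| \lesssim R$; \Cref{lem:conforming-companion}.b gives $\|\varepsilon(\mathcal{J}e_h)\| \lesssim \|e_h\|_{a_h}$; and \eqref{ineq:stability} (absorbing the $\mu$-weights) gives $|\I z|_\s \lesssim \|(1 - \Pi_\Tcal^k)\varepsilon(z)\| \lesssim h_\mathrm{max}^s\|e_\Tcal\|$. Hence a Cauchy inequality bounds the first term by $\|(1 - \Pi_\Tcal^k)\mathbb{C}\varepsilon(z)\|\,\|\varepsilon(\mathcal{J}e_h)\| \lesssim h_\mathrm{max}^s R\|e_\Tcal\|$, the second by $\|(1 - \Pi_\Tcal^k)\sigma\|\,\|(1 - \Pi_\Tcal^k)\varepsilon(z)\| \lesssim h_\mathrm{max}^s R\|e_\Tcal\|$, and the third by $|u_h|_\s\,|\I z|_\s \lesssim h_\mathrm{max}^s R\|e_\Tcal\|$. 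For the data terms, an elementwise Cauchy--Schwarz inequality combined with polynomial (and, on Neumann sides, trace) approximation for $z \in H^{1+s}(\Omega)^n$ — using Slobodeckij seminorms that localise subadditively over $\Tcal$ — gives $((1 - \Pi_\Tcal^k)f, (1 - \Pi_\Tcal^k)z)_{L^2(\Omega)} \lesssim h_\mathrm{max}^s\,\mathrm{osc}(f,\Tcal)\,\|z\|_{H^{1+s}(\Omega)} \lesssim h_\mathrm{max}^s R\|e_\Tcal\|$ and likewise for the Neumann contribution with $\mathrm{osc}(g,\Fcal_\mathrm{N})$. Collecting the bounds yields $\|e_\Tcal\|^2 \lesssim h_\mathrm{max}^s R\|e_\Tcal\|$, and dividing by $\|e_\Tcal\|$ (the case $e_\Tcal = 0$ being trivial) finishes the proof. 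The only genuine obstacle is the $\lambda$-robustness of the term $((1 - \Pi_\Tcal^k)\mathbb{C}\varepsilon(z), \varepsilon(\mathcal{J}e_h))$, which forces the use of the full regularity estimate \eqref{ineq:elliptic-regularity} including its $\lambda$-weighted divergence part together with the piecewise-constant structure of $\mathbb{C}$; the remaining steps are routine HHO bookkeeping.
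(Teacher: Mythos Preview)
Your proposal is correct and follows essentially the same Aubin--Nitsche argument as the paper: both test the dual problem with $\mathcal{J}e_h$, invoke the commuting diagram and $\I\circ\mathcal{J}=\mathrm{Id}$, and use the $\lambda$-robust regularity bound $\|(1-\Pi_\Tcal^k)\mathbb{C}\varepsilon(z)\|\lesssim h_{\max}^s\|e_\Tcal\|$ as the key ingredient. The only difference is organisational---the paper adds and subtracts $\varepsilon(u)-\varepsilon_h u_h$ to isolate the consistency term $(\sigma-\sigma_h,\varepsilon(z))$ and then quotes the identity \eqref{ineq:data-osc}, whereas you split $\mathbb{C}\varepsilon(z)$ via $\Pi_\Tcal^k$ and expand $a_h(e_h,\I z)$ directly; the resulting five terms and their bounds coincide with the paper's.
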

\begin{proof}
	Abbreviate $e_h = (e_\Tcal, e_\Fcal) \coloneqq \I u - u_h \in V_h$. Let $z \in V$ solve \eqref{def:aux-problem} with the right-hand side $e_\Tcal$. A consequence of 
	\eqref{ineq:elliptic-regularity} and piecewise Poincar\'e inequalities is
	\begin{align}\label{ineq:elliptic-regularity-2}
		\|(1 - \Pi_{\Tcal}^k) \mathbb{C}\varepsilon(z)\| \leq 2\|\mu(1 - \Pi_{\Tcal}^k) \varepsilon(z)\| + \|\lambda(1 - \Pi_{\Tcal}^k) \div z\| \lesssim c_{\mu,s}h_\mathrm{max}^s \|e_\Tcal\|.
	\end{align}
	The $L^2$ orthogonality $\mathcal{J} e_h - e_\Tcal \perp e_\Tcal$ and the solution property of $z$ imply
	\begin{align}\label{eq:proof-L2-split}
		\|e_\Tcal\|^2 &= (e_\Tcal, \mathcal{J} e_h)_{L^2(\Omega)} = (\mathbb{C} \varepsilon(z), \varepsilon(\mathcal{J} e_h))_{L^2(\Omega)}\nonumber\\
		&= (\mathbb{C} \varepsilon(z), \varepsilon(u) - \varepsilon_h u_h)_{L^2(\Omega)} - (\mathbb{C} \varepsilon(z), \varepsilon(u) - \varepsilon_h\I u + \varepsilon_h e_h - \varepsilon(\mathcal{J} e_h))_{L^2(\Omega)}.
	\end{align}
	Since $\sigma - \sigma_h = \mathbb{C}(\varepsilon(u) - \varepsilon_h(u_h))$,
	the first term can be rewritten as $(\mathbb{C} \varepsilon(z), \varepsilon(u) - \varepsilon_h u_h)_{L^2(\Omega)} = (\sigma - \sigma_h, \varepsilon(z))_{L^2(\Omega)}$.
	This is the left-hand side of \eqref{ineq:data-osc} with $\varphi \coloneqq z$.
	Cauchy, Poincar\'e, and trace inequalities lead to
	\begin{align*}
		(\mathbb{C} \varepsilon(z), \varepsilon(u) - \varepsilon_h u_h)_{L^2(\Omega)} \lesssim (\mathrm{osc}(f,\Tcal) + \mathrm{osc}(g,\Fcal_\mathrm{N}))\|\D_\pw(1 - \Pi_{\Tcal}^k) z\| + |u_h|_\s|\I z|_\s.
	\end{align*}
	This, $|\I z|_\s \lesssim \|\mu^{1/2}(1 - \Pi_{\Tcal}^k)\varepsilon(z)\|$ from \Cref{lem:stability-Galerkin}, the regularity $z \in H^{1+s}(\Omega)^n$ from \eqref{ineq:elliptic-regularity}, and $k \geq 1$ provide
	\begin{align}\label{ineq:proof-L2-T1}
		 &(\mathbb{C} \varepsilon(z), \varepsilon(u) - \varepsilon_h u_h)_{L^2(\Omega)}\nonumber\\
		 &\qquad\lesssim c_{\mu,s}\mu_0^{-1} h_\mathrm{max}^s(\mathrm{osc}(f,\Tcal) + \mathrm{osc}(g,\Fcal_\mathrm{N}) + \mu_0^{1/2}|u_h|_\s)\|e_\Tcal\|.
	\end{align}
	Recall the $L^2$ orthogonality $\varepsilon(u) - \varepsilon_h \I u \perp P_k(\mathcal{T})^{n \times n}$ from \Cref{lem:commuting-diagram} and $\varepsilon_h e_h - \varepsilon(\mathcal{J} e_h) \perp P_k(\mathcal{T})^{n \times n}$ from \Cref{rem:orthogonality-disc}. The Cauchy inequality and \eqref{ineq:elliptic-regularity-2} imply
	\begin{align*}
		- (\mathbb{C} \varepsilon(z)&, \varepsilon(u) - \varepsilon_h\I u + \varepsilon_h e_h - \varepsilon(\mathcal{J} e_h))_{L^2(\Omega)}\\
		&\leq \|(1 - \Pi_{\Tcal}^k)\mathbb{C} \varepsilon(z)\|(\|(1 - \Pi_\Tcal^k) \varepsilon(u)\| + \|\varepsilon_h e_h - \varepsilon(\mathcal{J} e_h)\|)\\
		& \lesssim c_{\mu,s}h_\mathrm{max}^s(\|(1 - \Pi_\Tcal^k) \varepsilon(u)\| + \|\varepsilon_h e_h  - \varepsilon(\mathcal{J} e_h)\|)\|e_\Tcal\|.
	\end{align*}
	This, the stability $\|\varepsilon_h e_h  - \varepsilon(\mathcal{J} e_h)\| \lesssim \mu_0^{-1/2}\|e_h\|_{a_h}$ from \Cref{lem:conforming-companion}, and $2\mu_0\|(1 - \Pi_\Tcal^k) \varepsilon(u)\| \leq \|(1 - \Pi_\Tcal^k) \sigma\|$ establish
	\begin{align}\label{ineq:proof-L2-T2}
		- (\mathbb{C} \varepsilon(z)&, \varepsilon(u) - \varepsilon_h\I u + \varepsilon_h e_h - \varepsilon(\mathcal{J} e_h))_{L^2(\Omega)}\nonumber\\
		& \lesssim
		c_{\mu,s}\mu_0^{-1}h_\mathrm{max}^s(\|(1 - \Pi_{\Tcal}^k) \sigma\| + \mu_0^{1/2}\|e_h\|_{a_h})\|e_\Tcal\|.
	\end{align}
	The combination of \eqref{eq:proof-L2-split}--\eqref{ineq:proof-L2-T2} with \eqref{ineq:a-priori} concludes the proof.
\end{proof}

\subsection{Quasi-best approximation with smoother}
If the given data $f \in V^*$ or $g \in \widetilde{H}^{-1/2}(\Gamma_{\mathrm{N}})^n$ is non-smooth, then the operator $\mathcal{J}$ can be utilized in an alternative HHO method in the spirit of \cite{VeeserZanottiII,ErnZanotti2020}.
The continuous problem seeks $u \in V$ such that any $v \in V$ satisfies
\begin{align}\label{def:continuous-problem-2}
	(\mathbb{C} \varepsilon(u), \varepsilon(v))_{L^2(\Omega)} = \langle f, v\rangle_{V^* \times V} + \langle g, v\rangle_{\widetilde{H}^{-1/2}(\Gamma_{\mathrm{N}}) \times \widetilde{H}^{1/2}(\Gamma_{\mathrm{N}})}.
\end{align}
The corresponding discrete problem
seeks $u_h \in V_h$ with
\begin{align}\label{def:discrete-problem-2}
	a_h(u_h,v_h) = \langle f, \mathcal{J} v_h\rangle_{V^* \times V} + \langle g, \mathcal{J} v_h\rangle_{\widetilde{H}^{-1/2}(\Gamma_{\mathrm{N}}) \times \widetilde{H}^{1/2}(\Gamma_{\mathrm{N}})}
\end{align}
for any $v_h \in V_h$.
Let $\sigma = \mathbb{C}\varepsilon(u)$ and $\sigma_h = \mathbb{C} \varepsilon_h u_h$ denote the continuous and discrete stress variable.
The arguments of this section imply the following oscillation-free and $\lambda$-robust quasi-best approximation error estimate.

\begin{theorem}[a~priori with smoother]
	If \eqref{assumption} and \eqref{ineq:elliptic-regularity} hold, then
	\begin{align}\label{ineq:ba-smoother}
		(\mu_0/\mu_1)^{1/2}\|\sigma - \sigma_h\| + \mu_0^{1/2}\|\I u - u_h\|_{a_h} + \mu_0^{1/2}|u_h|_\s + c_{\mu,s}^{-1}\mu_0 h_{\max}^{-s}\|\Pi_{\Tcal}^k u - u_\Tcal\|&\nonumber\\
		\lesssim \|(1 - \Pi_{\Tcal}^k) \sigma\|&.
	\end{align}
\end{theorem}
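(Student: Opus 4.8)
The plan is to mimic the proof of \Cref{thm:best-approximation} and \Cref{thm:L2-estimate}, but now with the smoother $\mathcal{J}$ built directly into the discrete right-hand side, which eliminates the data oscillation terms. First I would verify the analogue of the key variational identity: subtracting \eqref{def:discrete-problem-2} from the continuous problem \eqref{def:continuous-problem-2} tested with $\mathcal{J}\varphi$ for $\varphi \in V$, and using $\mathcal{I}\circ\mathcal{J}=\mathrm{Id}$ together with \Cref{rem:orthogonality-disc} (so that $(\sigma_h,\varepsilon_h\mathcal{I}\varphi)_{L^2(\Omega)}=(\sigma_h,\varepsilon(\mathcal{J}\mathcal{I}\varphi))_{L^2(\Omega)}$ when relevant), one obtains
\begin{align}\label{ineq:proof-smoother-key}
	(\sigma-\sigma_h,\varepsilon(\mathcal{J} v_h))_{L^2(\Omega)}=\s(u_h,v_h)\quad\text{for all } v_h\in V_h,
\end{align}
since the data pairings cancel exactly — this is the crucial improvement over \eqref{ineq:data-osc}, where an oscillation term $(f,(1-\Pi_{\Tcal}^k)\varphi)_{L^2(\Omega)}$ remained.

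Next I would run the energy-error argument verbatim from \Cref{sec:proof-a-priori}: with $e_h\coloneqq\mathcal{I}u-u_h$, the splitting \eqref{eq:proof-best-approx-split}--\eqref{eq:proof-best-approximation-1} carries over, and the right-hand side $(\sigma-\sigma_h,\varepsilon(\mathcal{J}e_h))_{L^2(\Omega)}-\s(u_h,e_h)$ now vanishes identically by \eqref{ineq:proof-smoother-key} with $v_h=e_h$. Hence $\|\varepsilon_h e_h\|_{\mathbb{C}}+|e_h|_\s+|u_h|_\s\lesssim|\mathcal{I}u|_\s$, and \Cref{lem:stability-Galerkin} together with $2\mu_0\|(1-\Pi_{\Tcal}^k)\varepsilon(u)\|\le\|(1-\Pi_{\Tcal}^k)\sigma\|$ bounds $|\mathcal{I}u|_\s\lesssim\|(1-\Pi_{\Tcal}^k)\sigma\|$. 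This gives the oscillation-free bound on $\|\mathcal{I}u-u_h\|_{a_h}+|u_h|_\s$. The stress estimate $\|\sigma-\sigma_h\|\lesssim\|(1-\Pi_{\Tcal}^k)\sigma\|$ then follows from \Cref{lem:tr-div-dev}: part (c) still applies (the orthogonality $(\sigma-\sigma_h,\varepsilon(\varphi_0))_{L^2(\Omega)}=0$ for $\varphi_0\in S^k_\mathrm{D}(\Tcal_0)$ now reads off \eqref{ineq:proof-smoother-key} with $v_h=\mathcal{I}\varphi_0$, using $|\mathcal{I}\varphi_0|_\s=0$ from \Cref{cor:kernel-stabilization} and $\s(u_h,\mathcal{I}\varphi_0)=0$); the $\mathbb{C}^{-1}$-part is handled by the Pythagoras identity \eqref{ineq:proof-ba-sigma-sigma-h-C-1}; and for the $\|\div(\sigma-\sigma_h)\|_{-1}$ term I would redo \Cref{lem:divergence-stress} with \eqref{ineq:proof-smoother-key} in place of \eqref{ineq:data-osc}, which removes $\mathrm{osc}(f,\Tcal)$ and leaves only $\|\div(\sigma-\sigma_h)\|_{-1}\lesssim|u_h|_\s\lesssim\|(1-\Pi_{\Tcal}^k)\sigma\|$.

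Finally, for the $L^2$-term $h_{\max}^{-s}\|\Pi_{\Tcal}^k u-u_\Tcal\|$ I would replay the proof of \Cref{thm:L2-estimate}: solve the dual problem \eqref{def:aux-problem} with right-hand side $e_\Tcal$, use $\mathcal{J}e_h-e_\Tcal\perp e_\Tcal$ and the splitting \eqref{eq:proof-L2-split}. The term $(\sigma-\sigma_h,\varepsilon(z))_{L^2(\Omega)}$ is now, by \eqref{ineq:proof-smoother-key} with $v_h=\mathcal{I}z$, simply $\s(u_h,\mathcal{I}z)\lesssim|u_h|_\s\,|\mathcal{I}z|_\s\lesssim|u_h|_\s\|(1-\Pi_{\Tcal}^k)\varepsilon(z)\|\lesssim h_{\max}^s|u_h|_\s\|e_\Tcal\|$ — again oscillation-free — while the remaining orthogonality terms are estimated exactly as in \Cref{thm:L2-estimate} via \eqref{ineq:elliptic-regularity-2} and \Cref{rem:orthogonality-disc}. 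Dividing by $\|e_\Tcal\|$ and inserting the already-established bound on $\|e_h\|_{a_h}+|u_h|_\s$ closes the estimate. I expect the only genuinely delicate point to be checking that $\mathcal{J}$-consistency makes the data functionals cancel cleanly even when $f\in V^*$ and $g\in\widetilde H^{-1/2}(\Gamma_{\mathrm{N}})^n$ are merely distributional — i.e. that $\langle f,\mathcal{J}\mathcal{I}\varphi\rangle$ and $\langle g,\mathcal{J}\mathcal{I}\varphi\rangle$ are well-defined and coincide with the continuous pairings after the cancellation in \eqref{ineq:proof-smoother-key}; everything downstream is a routine transcription of the arguments in \Cref{sec:a-priori}.
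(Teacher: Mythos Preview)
Your overall strategy matches the paper's, and the key identity \eqref{ineq:proof-smoother-key} is precisely what drives the oscillation-free estimates. There is, however, a recurring slip: your identity controls $(\sigma-\sigma_h,\varepsilon(\mathcal{J}v_h))$, not $(\sigma-\sigma_h,\varepsilon(\varphi))$ for an arbitrary $\varphi\in V$. This bites in two places. First, your plan to ``redo \Cref{lem:divergence-stress}'' cannot yield $\|\div(\sigma-\sigma_h)\|_{-1}\lesssim|u_h|_\s$ as you claim: the dual norm runs over all $\varphi\in H^1_0(\Omega)^n$, and choosing $v_h=\I\varphi$ in \eqref{ineq:proof-smoother-key} only handles $\varepsilon(\mathcal{J}\I\varphi)$. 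The paper supplies the missing step by splitting
\[
(\sigma-\sigma_h,\varepsilon(\varphi))_{L^2(\Omega)}
=\bigl((1-\Pi_\Tcal^k)\sigma,\varepsilon(\varphi-\mathcal{J}\I\varphi)\bigr)_{L^2(\Omega)}
+(\sigma-\sigma_h,\varepsilon(\mathcal{J}\I\varphi))_{L^2(\Omega)},
\]
where the first term uses $\varepsilon(\varphi-\mathcal{J}\I\varphi)\perp P_k(\Tcal)^{n\times n}$ together with $\|\varepsilon(\varphi-\mathcal{J}\I\varphi)\|\lesssim\|\varepsilon(\varphi)\|$ from \eqref{ineq:stability-2}; the resulting bound is $\lesssim\|(1-\Pi_\Tcal^k)\sigma\|$, not merely $\lesssim|u_h|_\s$. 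Second, the same conflation appears in your $L^2$ step, where you write $(\sigma-\sigma_h,\varepsilon(z))=\s(u_h,\I z)$; only $(\sigma-\sigma_h,\varepsilon(\mathcal{J}\I z))$ is given directly by \eqref{ineq:proof-smoother-key}, and the remainder $(\sigma-\sigma_h,\varepsilon(z-\mathcal{J}\I z))$ again needs the orthogonality argument (it contributes a harmless term of order $h_{\max}^s\|(1-\Pi_\Tcal^k)\sigma\|\,\|e_\Tcal\|$).

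A smaller point: your intermediate claim $\|\varepsilon_h e_h\|_{\mathbb{C}}+|e_h|_\s+|u_h|_\s\lesssim|\I u|_\s$ drops the contribution $-((1-\Pi_\Tcal^k)\sigma,\varepsilon(\mathcal{J}e_h))_{L^2(\Omega)}$ that survives when passing from $(\varepsilon_h\I u,\varepsilon_h e_h)_\mathbb{C}$ to $(\sigma,\varepsilon(\mathcal{J}e_h))_{L^2(\Omega)}$ in the chain \eqref{eq:proof-best-approx-split}--\eqref{eq:proof-best-approximation-1}; this adds $\|(1-\Pi_\Tcal^k)\sigma\|$ to the right-hand side, which is of course absorbed since you bound $|\I u|_\s$ by the same quantity anyway.
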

\begin{proof}
	The arguments in the proof of \Cref{thm:best-approximation} apply to this case as well with the following adjustments.
	Abbreviate $e_h \coloneqq \I u - u_h \in V_h$.
	First, observe from the variational formulations \eqref{def:continuous-problem-2}--\eqref{def:discrete-problem-2} that
	\begin{align*}
		(\sigma, \varepsilon(\mathcal{J} e_h))_{L^2(\Omega)} - a_h(u_h,e_h) = 0.
	\end{align*}
	Therefore, the data oscillations in \eqref{ineq:proof-ba-osc} do not arise and
	\begin{align}\label{ineq:proof-ba-smoother-I}
		\|e_h\|_{a_h} + |u_h|_\s \lesssim \mu_0^{-1/2}\|(1 - \Pi_{\Tcal}^k) \sigma\|
	\end{align}
	holds instead of \eqref{ineq:proof-ba-step-1}. 
	The second observation is the projection property
	\begin{align*}
		\Pi_{\Tcal}^k \varepsilon(\mathcal{J} \I \varphi) = \varepsilon_h \I \varphi = \Pi_{\Tcal}^k \varepsilon(\varphi)
	\end{align*}
	for all $\varphi \in V$ from \Cref{lem:commuting-diagram} and
	$\I\circ \mathcal{J} = \mathrm{Id}$ by design of the right-inverse $\mathcal{J}$.
	This, $\sigma_h \in P_k(\mathcal{T})^{n \times n}$, and
	the variational formulations \eqref{def:continuous-problem-2}--\eqref{def:discrete-problem-2} reveal
	\begin{align}\label{eq:proof-ba-smoother-1}
		(\sigma - \sigma_h, \varepsilon(\mathcal{J} \I \varphi))_{L^2(\Omega)} = (\sigma, \varepsilon(\mathcal{J} \I \varphi))_{L^2(\Omega)} - (\sigma_h, \varepsilon_h \I \varphi)_{L^2(\Omega)} = - \s(u_h,\I \varphi).
	\end{align}
	Recall $|\I \varphi|_\s \lesssim \mu_1^{1/2}\|\varepsilon(\varphi)\|$ from \Cref{lem:equiv-stab} and $\mu_0^{1/2}|u_h|_\s \lesssim \|(1 - \Pi_{\Tcal}^k) \sigma\|$ from \eqref{ineq:proof-ba-smoother-I}. Therefore, a Cauchy inequality in \eqref{eq:proof-ba-smoother-1} provides
	\begin{align*}
		(\sigma - \sigma_h, \varepsilon(\mathcal{J} \I \varphi))_{L^2(\Omega)} \lesssim (\mu_1/\mu_0)^{1/2}\|(1 - \Pi_{\Tcal}^k) \sigma\| \|\varepsilon(\varphi)\|.
	\end{align*}
	The combination of this with $\varepsilon(\varphi - \mathcal{J} \I \varphi) \perp P_k(\mathcal{T})^{n \times n}$, $\|\varepsilon(\varphi - \mathcal{J} \I \varphi)\| \lesssim \|\varepsilon(\varphi)\|$ from \eqref{ineq:stability-2}, and a Cauchy inequality result in 
	\begin{align*}
		(\sigma - \sigma_h, \varepsilon(\varphi))_{L^2(\Omega)} &= ((1 - \Pi_\Tcal^k) \sigma, \varepsilon(\varphi - \mathcal{J} \I \varphi))_{L^2(\Omega)} - (\sigma - \sigma_h, \varepsilon(\mathcal{J} \I \varphi))\\
		& \lesssim (\mu_1/\mu_0)^{1/2}\|(1 - \Pi_{\Tcal}^k) \sigma\|\|\varepsilon(\varphi)\|.
	\end{align*}
	The supremum of this over all $\varphi \in H^1_0(\Omega)^n$ with the normalization $\|\D \varphi\| = 1$ provides $\|\div(\sigma - \sigma_h)\|_{-1} \lesssim (\mu_1/\mu_0)^{1/2}\|(1 - \Pi_{\Tcal}^k) \sigma\|$. This, \eqref{ineq:proof-ba-sigma-sigma-h-C-1}, and $\|\varepsilon_h e_h\|_{\mathbb{C}} \leq \|e_h\|_{a_h} \lesssim \mu_0^{-1/2}\|(1 - \Pi_\Tcal^k) \sigma\|$ from \eqref{ineq:proof-ba-smoother-I} imply 
	$$\|\mu^{1/2}(\sigma - \sigma_h)\|_{\mathbb{C}^{-1}}^2 \lesssim (\mu_1/\mu_0)\|(1 - \Pi_{\Tcal}^k) \sigma\|^2.$$
	For the modified HHO method \eqref{def:discrete-problem-2}, \eqref{eq:proof-reliability-sigma-sigma-h-constant-free} holds verbatim and this leads to $\sigma - \sigma_h \in \Sigma_0$ as in \Cref{lem:tr-div-dev}.c.
	In view of \eqref{ineq:L2-dev-div}, we have proven $\|\sigma - \sigma_h\| \lesssim (\mu_1/\mu_0)^{1/2}\|(1 - \Pi_{\Tcal}^k) \sigma\|$. This and \eqref{ineq:proof-ba-smoother-I} conclude the proof of
	\begin{align}\label{ineq:proof-a-priori-smoother}
		(\mu_0/\mu_1)^{1/2}\|\sigma - \sigma_h\| + \mu_0^{1/2}\|e_h\|_{a_h} + \mu_0^{1/2}|u_h|_\s
		\lesssim \|(1 - \Pi_{\Tcal}^k) \sigma\|.
	\end{align}
	To derive $L^2$ estimates, adopt the notation of $z$ and $e_h$ from the proof of \Cref{thm:L2-estimate}. Recall that $(\mathbb{C} \varepsilon(z), \varepsilon(u) - \varepsilon_h u_h)_{L^2(\Omega)} = (\sigma - \sigma_h, \varepsilon(z))_{L^2(\Omega)} \leq |u_h|_\s |\I z|_\s$ from \eqref{eq:proof-ba-smoother-1}. The regularity \eqref{ineq:elliptic-regularity} and \Cref{lem:stability-Galerkin}.a allow for $|\I z|_\s \lesssim \|\mu^{1/2}(1 - \Pi_{\Tcal}^k) \varepsilon(z)\| \lesssim c_{\mu,s}\mu_0^{-1/2}h_{\max}^s\|e_\Tcal\|$. Therefore,
	\begin{align}\label{ineq:proof-L2-smoother-1}
		(\mathbb{C} \varepsilon(z), \varepsilon(u) - \varepsilon_h u_h)_{L^2(\Omega)} \lesssim c_{\mu,s}\mu_0^{-1/2}h_{\max}^s|u_h|_\s \|e_\Tcal\|.
	\end{align}
	Since \eqref{eq:proof-L2-split} and \eqref{ineq:proof-L2-T2} only involve the orthogonality arising from \Cref{lem:commuting-diagram} and \Cref{lem:conforming-companion}, they hold verbatim. The combination of this with \eqref{ineq:proof-a-priori-smoother}--\eqref{ineq:proof-L2-smoother-1} reveals $\|\Pi_{\Tcal}^k u - u_\Tcal\| \lesssim c_{\mu,s}\mu_0^{-1} h_{\max}^s\|(1 - \Pi_{\Tcal}^k) \sigma\|$.
\end{proof}

\section{Stabilization-free and $\lambda$-robust a posteriori error analysis}\label{sec:a-posteriori}

This section derives the $\lambda$-robust reliable and efficient error estimate $\eta$ from \eqref{def:eta}.

\subsection{Main result}
The main result of this section establishes the a~posteriori error estimate \eqref{ineq:reliability} for the discretization \eqref{def:discrete-problem} without smoother.

\begin{theorem}[a~posteriori]\label{thm:reliability}
	Suppose \eqref{assumption}, then \eqref{ineq:reliability} holds with
	$\lambda$-independent constants $C_\mathrm{rel}$ and $C_\mathrm{eff}$ that exclusively depend on $k$, $\Gamma_{\mathrm{D}}$, $\Sigma_0$, and $\mathbb{T}$.
\end{theorem}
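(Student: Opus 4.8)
The plan is to prove the two estimates in \eqref{ineq:reliability} separately. The $\lambda$-robustness of the reliability bound rests on the tr-dev-div lemma \Cref{lem:tr-div-dev} (hence on \eqref{assumption}) and its stabilization-freeness on the kernel description of \Cref{cor:kernel-stabilization}, which is the step tied to simplicial meshes; the overall strategy generalizes \cite{BertrandCarstensenGraessle2021} from the Poisson model to linear elasticity.

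For reliability, two facts are combined. An elementwise integration by parts and the weak form \eqref{def:continuous-problem} give, for every $w \in V$,
\begin{align*}
	(\sigma-\sigma_h,\varepsilon(w))_{L^2(\Omega)}
	&= (f+\div_\pw\sigma_h,w)_{L^2(\Omega)}
	- \sum_{F\in\Fcal(\Omega)}([\sigma_h]_F\nu_F,w)_{L^2(F)}\\
	&\quad + \sum_{F\in\Fcal_\mathrm{N}}(g-\sigma_h\nu_F,w)_{L^2(F)}.
\end{align*}
Moreover $\sigma-\sigma_h$ is $L^2$-orthogonal to $\varepsilon(S^1_\mathrm{D}(\Tcal))$: by \Cref{lem:commuting-diagram} and \eqref{def:discrete-problem}, $(\sigma-\sigma_h,\varepsilon(w_h))_{L^2(\Omega)} = (f,(1-\Pi_\Tcal^k)w_h)_{L^2(\Omega)} + \sum_{F\in\Fcal_\mathrm{N}}((1-\Pi_F^k)g,w_h)_{L^2(F)} + \s(u_h,\I w_h)$ for $w_h \in S^1_\mathrm{D}(\Tcal)$, and each summand vanishes because $w_h$ is piecewise affine with $k\ge1$ and $\I w_h$ lies in the kernel of $|\bullet|_\s$ by \Cref{cor:kernel-stabilization} (using $S^1_\mathrm{D}(\Tcal)\subset\mathrm{CR}^{k+1}_\mathrm{D}(\Tcal)$). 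Replacing $w$ by $w - Jw$ for a Scott--Zhang quasi-interpolation $J\colon V \to S^1_\mathrm{D}(\Tcal)$ and invoking its local approximation and trace estimates turns the displayed identity into $(\sigma-\sigma_h,\varepsilon(w))_{L^2(\Omega)} \lesssim \eta\,\|\D w\|$ for all $w \in V$, where only the data-residual part of $\eta$ enters.

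Taking the supremum over $w \in H^1_0(\Omega)^n$ with $\|\D w\| = 1$ bounds $\|\div(\sigma-\sigma_h)\|_{-1}\lesssim\eta$. For the energy contribution, $\|\sigma-\sigma_h\|_{\mathbb{C}^{-1}}^2 = (\varepsilon(u)-\varepsilon_hu_h,\sigma-\sigma_h)_{L^2(\Omega)}$ is split along a minimizer $v^\star\in V$ of $\min_{v\in V}\|\mu(\varepsilon(v)-\varepsilon_hu_h)\|$ (up to the $\mu_0,\mu_1$-scaling, the $v$-dependent part of $\eta$): then $(\varepsilon(v^\star)-\varepsilon_hu_h,\sigma-\sigma_h)_{L^2(\Omega)} \le \|\varepsilon(v^\star)-\varepsilon_hu_h\|\,\|\sigma-\sigma_h\| \lesssim \eta\,\|\sigma-\sigma_h\|$, while $(\sigma-\sigma_h,\varepsilon(u-v^\star))_{L^2(\Omega)} \lesssim \eta\,\|\D(u-v^\star)\| \lesssim \eta\,\|\varepsilon(u-v^\star)\| \lesssim \eta\,(\|\sigma-\sigma_h\|+\eta)$ by the residual bound with $w = u-v^\star \in V$, Korn's inequality \eqref{ineq:Korn-inequality}, the $\lambda$-uniform bound $\|\mathbb{C}^{-1}\bullet\|\lesssim\|\bullet\|$, and the choice of $v^\star$. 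Hence $\|\sigma-\sigma_h\|_{\mathbb{C}^{-1}}^2 \lesssim \eta\,\|\sigma-\sigma_h\| + \eta^2$. Since $\sigma-\sigma_h\in\Sigma_0$ by \Cref{lem:tr-div-dev}.c, \eqref{ineq:L2-dev-div} yields $\|\sigma-\sigma_h\| \lesssim \|\sigma-\sigma_h\|_{\mathbb{C}^{-1}} + \|\div(\sigma-\sigma_h)\|_{-1} \lesssim \|\sigma-\sigma_h\|_{\mathbb{C}^{-1}} + \eta$; substituting into the previous bound and absorbing with a Young inequality gives $\|\sigma-\sigma_h\|_{\mathbb{C}^{-1}}\lesssim\eta$, hence $\|\sigma-\sigma_h\|\lesssim\eta$, i.e.\ the lower bound in \eqref{ineq:reliability}.

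Efficiency is the classical bubble-function machinery applied termwise: with the interior bubble on $T$ and $f+\div_\pw\sigma_h = -\div(\sigma-\sigma_h)$ in $L^2(T)$ one obtains $\|h_T(f+\div_\pw\sigma_h)\|_{L^2(T)}\lesssim\|\sigma-\sigma_h\|_{L^2(T)} + \|h_T(1-\Pi_T^k)f\|_{L^2(T)}$; face bubbles on $F\in\Fcal(\Omega)$ with $[\sigma]_F\nu_F=0$, resp.\ on $F\in\Fcal_\mathrm{N}$ with $\sigma\nu=g$, bound $h_F^{1/2}\|[\sigma_h]_F\nu_F\|_{L^2(F)}$ and $h_F^{1/2}\|g-\sigma_h\nu_F\|_{L^2(F)}$ by $\|\sigma-\sigma_h\|_{L^2(\omega(F))}$ plus data oscillation after reusing the interior estimate, and the conforming term needs no oscillation at all since, with $v = u$, $\mu\min_{v\in V}\|\varepsilon(v)-\varepsilon_hu_h\| \le \mu_1\|\varepsilon(u)-\varepsilon_hu_h\| = \mu_1\|\mathbb{C}^{-1}(\sigma-\sigma_h)\| \lesssim \|\sigma-\sigma_h\|$; summation over $\Tcal$ and $\Fcal$ gives the upper bound in \eqref{ineq:reliability}. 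The main obstacle is the reliability part: all constants must remain independent of $\lambda$ — forcing the route through \Cref{lem:tr-div-dev} and the $\lambda$-uniform bound on $\mathbb{C}^{-1}$ — and the estimator must avoid $|\bullet|_\s$, which works only because \Cref{cor:kernel-stabilization} places the conforming $P_1$ test functions in the kernel of the stabilization, an argument genuinely restricted to simplices; the concluding quadratic absorption $\|\sigma-\sigma_h\|_{\mathbb{C}^{-1}}^2\lesssim\eta\,\|\sigma-\sigma_h\|+\eta^2$ is the last piece of bookkeeping.
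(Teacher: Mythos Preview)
Your proof is correct, but it departs from the paper's argument in two notable ways.

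For reliability, the paper introduces the auxiliary function $\psi\in V$ from \Cref{lem:tr-div-dev}.d and the divergence-free remainder $\tau\coloneqq\sigma-\sigma_h-\mathbb{C}\varepsilon(\psi)$, obtaining the \emph{orthogonal} split $\|\sigma-\sigma_h\|_{\mathbb{C}^{-1}}^2=\|\varepsilon(\psi)\|_\mathbb{C}^2+\|\tau\|_{\mathbb{C}^{-1}}^2$; the residual estimate then controls $\|\varepsilon(\psi)\|_\mathbb{C}$ directly, while $\|\tau\|_{\mathbb{C}^{-1}}\lesssim\min_{v\in V}\|\varepsilon(v)-\varepsilon_hu_h\|$ follows from $\div\tau=0$ and \Cref{lem:tr-div-dev}.b applied to $\tau$ alone. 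You instead split $\|\sigma-\sigma_h\|_{\mathbb{C}^{-1}}^2$ additively along the minimizer $v^\star$ and close with the quadratic inequality $\|\sigma-\sigma_h\|_{\mathbb{C}^{-1}}^2\lesssim\eta\,\|\sigma-\sigma_h\|+\eta^2$ plus a Young absorption. Your route is more elementary (no auxiliary elasticity problem), while the paper's Pythagoras split is cleaner and isolates precisely which part of $\eta$ controls which error component.

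For efficiency of the conforming term you take $v=u$ and the trivial bound $\|\mu(\varepsilon(u)-\varepsilon_hu_h)\|=\|\mu\,\mathbb{C}^{-1}(\sigma-\sigma_h)\|\lesssim\|\sigma-\sigma_h\|$, which is enough for \eqref{ineq:reliability} as stated. The paper establishes the stronger statement that the \emph{computable} choice $v=\mathcal{A}u_h$ is efficient as well; this requires the right-inverse $\mathcal{J}$, the quasi-best approximation \eqref{ineq:a-priori}, and \Cref{lem:epsR-epsh}, and is what justifies the practical estimator used in the numerical experiments.
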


\begin{remark}[a~posteriori with smoother]
	For the discretization \eqref{def:discrete-problem-2} with smoother and $L^2$ right-hand sides, \Cref{thm:reliability} holds verbatim. The situation is, unfortunately, much more involved for $H^{-1}$ right-hand sides, where explicit error control relies on additional a~priori information on $f$. We omit further comments on such cases, but refer to \cite{KreuzerVeeser2021} for an abstract approach and \cite{CarstensenGraessleNataraj2024} for explicit examples.
\end{remark}

\begin{remark}[Comparison with \cite{BertrandCarstensenGraessle2021}]
	The only other stabilisation-free a posteriori error analysis for some HHO method applied to second-order PDE is \cite{BertrandCarstensenGraessle2021} on the Poisson model problem with an orthogonality of the piecewise gradients of some representation $\mathcal{R} u_h$ of the HHO displacement  $u_h$ to
	divergence-free lowest-order Raviart-Thomas finite element functions.
	It is however difficult to mimic the Helmholtz decomposition from \cite{BertrandCarstensenGraessle2021}, which is in particular challenging in 3D and in presence of mixed boundary conditions.
\end{remark}

\begin{remark}[other applications]
	The a~posteriori error analysis of this section can be extended to the scalar elliptic PDE 
	$-\div A \nabla u = f$ in $\Omega$, $u = 0$ on $\Gamma_{\mathrm{D}}$, $\sigma \nu = g$ on $\Gamma_{\mathrm{N}}$ with the stress $\sigma = A \nabla u$ and pointwise symmetric positive definite coefficients $A \in P_0(\mathcal{T})^{n \times n}$. Let the discrete solution $u_h \in V_h$ solve
	\begin{align*}
		\int_\Omega A \mathcal{G} u_h \cdot \mathcal{G}_h v_h = \int_\Omega f v_\Tcal \d{x} + \int_{\Gamma_\mathrm{N}} g v_\Fcal \d{s} \quad\text{for any } v_h = (v_\Tcal, v_\Fcal) \in V_h,
	\end{align*}
	where the notation of this paper is carried over to the scalar case. Then our analysis below establishes that the discrete stress $\sigma_h \coloneqq A \mathcal{G}_h u_h$ satisfies
	\begin{align*}
		\|\sigma - \sigma_h\|_{A^{-1}}^2 &\lesssim \|h_\Tcal(f + \div_\pw \sigma_h)\|^2 + \min_{v \in V} \|\nabla v - \mathcal{G}_h u_h\|^2_A\\
		&\qquad+ \sum_{F \in \Fcal(\Omega)} h_F\|[\sigma_h]_F \nu_F\|_{L^2(F)}^2 +
		\sum_{F \in \Fcal_\mathrm{N}} h_F\|g - \sigma_h \nu_F\|_{L^2(F)}^2\\
		&\lesssim \|\sigma - \sigma_h\|_{A^{-1}}^2 + \mathrm{osc}^2(f,\Tcal) + \mathrm{osc}^2(g,\Fcal_\mathrm{N})
	\end{align*}
	with the weighted norm $\|\bullet\|_{A^s}^2 \coloneqq (A^s \bullet, \bullet)_{L^2(\Omega)}$ for $s = \pm 1$.
\end{remark}

\subsection{Proof of reliability}\label{sec:proof-reliability}
The proof is divided into three steps.\\[-0.5em]

\noindent4.2.1. \emph{Orthogonal split.}
Recall $\psi \in V$ from \Cref{lem:tr-div-dev}.d
and define the divergence free function $\tau \coloneqq \sigma - \sigma_h - \mathbb{C} \varepsilon (\psi) \in L^2(\Omega)^{n \times n}$ with the $L^2$ orthogonality $\tau \perp \varepsilon(V)$.
The latter is equivalent to the $L^2$ orthogonality $\mathbb{C}^{-1/2} \tau \perp \mathbb{C}^{1/2} \varepsilon(V)$ and the Pythagoras theorem provides the split
\begin{align}\label{eq:proof-reliability-split}
	\|\sigma - \sigma_h\|_{\mathbb{C}^{-1}}^2 &= \|\varepsilon (\psi)\|_\mathbb{C}^2 + \|\tau\|_{\mathbb{C}^{-1}}^2.
\end{align}
\noindent4.2.2. \emph{Proof of}
\begin{align}\label{ineq:proof-a-post-step-2}
	 \mu_0^{1/2}\|\varepsilon(\psi)\|_\mathbb{C} + \|\div(\sigma - \sigma_h)\|_{-1} \lesssim \eta.
\end{align}
Given any $\varphi \in V$,
let $\varphi_\mathrm{C} \in S^1_\mathrm{D}(\mathcal{T})$ denote the Scott-Zhang quasi-interpolation \cite{ScottZhang1990} of $\varphi$ with the local approximation property
\begin{align}\label{ineq:scott-zhang}
	h_T\|\varphi - \varphi_\mathrm{C}\|_{L^2(T)} + \|\D(\varphi - \varphi_\mathrm{C})\|_{L^2(T)} \lesssim \|\D \varphi\|_{L^2(\Omega(T))}
\end{align}
in the element patch $\Omega(T)$.
Since $\varphi_\mathrm{C} \in S^1_\mathrm{D}(\Tcal)$, $(\sigma - \sigma_h, \varepsilon(\varphi_\mathrm{C}))_{L^2(\Omega)} = 0$ from \eqref{ineq:data-osc} and \Cref{cor:kernel-stabilization}.
An integration by parts reveals
\begin{align*}
	&(\sigma - \sigma_h, \varepsilon (\varphi))_{L^2(\Omega)} = (\sigma - \sigma_h, \varepsilon(\varphi - \varphi_\mathrm{C}))_{L^2(\Omega)} = (\sigma - \sigma_h, \D(\varphi - \varphi_\mathrm{C}))_{L^2(\Omega)}\\
	&\quad = (f + \div_\pw \sigma_h, \varphi - \varphi_\mathrm{C})_{L^2(\Omega)}\\
	&\qquad\qquad + \sum_{F \in \mathcal{F}(\Omega)} (\varphi - \varphi_\mathrm{C}, [\sigma_h]_F \nu_F)_{L^2(F)} + \sum_{F \in \Fcal_\mathrm{N}} (\varphi - \varphi_\mathrm{C}, g - \sigma_h \nu_F)_{L^2(F)}.
\end{align*}
Standard arguments in the a~posteriori error analysis with the Cauchy and trace inequalities as well as with \eqref{ineq:scott-zhang} result in
\begin{align}\label{ineq:proof-a-post-step-1}
	(\sigma &- \sigma_h, \varepsilon (\varphi))_{L^2(\Omega)} \lesssim \|h_\mathcal{T}(f + \div_\pw \sigma_h)\|\|\D\varphi\|\nonumber\\
	& + \big(\sum_{F \in \mathcal{F}(\Omega)} h_F\|[\sigma_h]_F \nu_F\|_{L^2(F)}^2 + \sum_{F \in \Fcal_\mathrm{N}} h_F\|g - \sigma_h \nu_F\|_{L^2(F)}^2\big)^{1/2}\|\D \varphi\|.
\end{align}
This proves $\|\div(\sigma - \sigma_h)\|_{-1} \lesssim \eta$.
The definition of $\psi$ in \Cref{lem:tr-div-dev}.a implies $\|\varepsilon(\psi)\|_{\mathbb{C}}^2 = (\sigma - \sigma_h, \varepsilon(\psi))_{L^2(\Omega)}$. 
Since $\|\D \psi\| \lesssim \|\varepsilon(\psi)\| \lesssim \mu_0^{-1/2}\|\varepsilon(\psi)\|_\mathbb{C}$ from the (first) Korn inequality \eqref{ineq:Korn-inequality}, 
$\|\varepsilon(\psi)\|_\mathbb{C} \lesssim \mu_0^{-1/2}\eta$ follows from \eqref{ineq:proof-a-post-step-1} with the choice $\varphi \coloneqq \psi$.\\[-0.5em]

\noindent4.2.3. \emph{Proof of}
\begin{align}\label{ineq:proof-a-post-step-3}
	\mu_0^{1/2}\|\tau\|_{\mathbb{C}^{-1}} \lesssim (\mu_1/\mu_0)^{1/2}\eta.
\end{align}
The $L^2$ orthogonality $\mathbb{C}^{-1} \tau \perp \mathbb{C} \varepsilon (V)$ from the definition of $\psi$ in \Cref{lem:tr-div-dev}.d and a Cauchy inequality provide, for any $v \in V$, that
\begin{align}\label{ineq:proof-reliability-step-2}
	\|\tau\|_{\mathbb{C}^{-1}}^2 = (\mathbb{C} \varepsilon (v) - \sigma_h, \mathbb{C}^{-1} \tau)_{L^2(\Omega)} &= (\varepsilon (v) - \varepsilon_h u_h, \tau)_{L^2(\Omega)} \nonumber\\
	&\leq \mu_0^{-1/2}\|\mu^{1/2}(\varepsilon (v) - \varepsilon_h u_h)\| \|\tau\|.
\end{align}
Since $\sigma - \sigma_h$ and $\mathbb{C} \varepsilon(\psi)$ are contained in $\Sigma_0$ from \Cref{lem:tr-div-dev}.c--d, $\tau = \sigma - \sigma_h - \mathbb{C} \varepsilon(\psi) \in \Sigma_0$. Hence, \Cref{lem:tr-div-dev}.b implies $\|\tau\| \lesssim \|\mu^{1/2}\tau\|_{\mathbb{C}^{-1}} + \|\div\, \tau\|_{-1}$.
This, \eqref{ineq:proof-reliability-step-2}, and $\div\, \tau = 0$ conclude the proof of \eqref{ineq:proof-a-post-step-3}.\\[-0.5em]

\noindent\emph{Finish of the proof}
The bound $\|\sigma - \sigma_h\| \lesssim \|\mu^{1/2}(\sigma - \sigma_h)\|_{\mathbb{C}^{-1}} + \|\div(\sigma - \sigma_h)\|_{-1}$ from \eqref{ineq:L2-dev-div}, \eqref{eq:proof-reliability-split}--\eqref{ineq:proof-a-post-step-2}, and \eqref{ineq:proof-a-post-step-3} conclude the proof of $\|\sigma - \sigma_h\| \lesssim (\mu_1/\mu_0) \eta$.\hfill \qed
	
\subsection{Proof of efficiency}
We establish the efficiency of $\eta$ from \eqref{def:eta} with the choice $v \coloneqq \mathcal{A} u_h$.
The proof is divided into three parts: the first one is devoted to the efficiency of nodal averaging (without stabilization) while the remaining parts establish the efficiency of the remaining terms with bubble function techniques \cite{Verfuerth2013}.

\subsubsection{Efficiency of nodal averaging}
A straightforward modification of the proof of \cite[Theorem 4.7]{DiPietroDroniou2020} using \eqref{ineq:proof-conf-companion-3} and \Cref{lem:equiv-stab} leads to
\begin{align}\label{ineq:proof-efficiency-averaging}
	\begin{split}
		\|\D_\pw(\mathcal{R} - \mathcal{A}) u_h\| &\lesssim \min_{v \in V} \|\D_\pw (v - \mathcal{R} u_h)\| + \widetilde{\s}(u_h,u_h)^{1/2}\\
		&\lesssim \|\D_\pw(\mathcal{J} u_h - \mathcal{R} u_h)\| + |u_h|_{\widehat{\s}}
	\end{split}
\end{align}
with the right-inverse $\mathcal{J}$ of $\I$ from \Cref{lem:conforming-companion} and the stabilization $\widetilde{\s}$ from \eqref{def:alternative-stabilization}.
Since $\I \mathcal{J} u_h = u_h$, \eqref{ineq:ba-RI} provides
\begin{align*}
	\|\D_\pw(\mathcal{J} u_h - \mathcal{R} u_h)\| \lesssim \|\varepsilon_\pw(\mathcal{J} u_h - \mathcal{R} u_h)\|.
\end{align*}
Recall the best approximation property in \eqref{ineq:ba-RI} to obtain
\begin{align*}
	\|\varepsilon_\pw(\mathcal{J} u_h - \mathcal{R} u_h)\| \leq \|\varepsilon_\pw (\mathcal{J} u_h - \mathcal{R} \I u)\| \leq \|\varepsilon (\mathcal{J} u_h - \mathcal{J} \I u)\| + \|\varepsilon_\pw(\mathcal{J} - \mathcal{R}) \I u\|.
\end{align*}
The combination of the two previously displayed formula with
the stability $\|\varepsilon (\mathcal{J} u_h - \mathcal{J} \I u)\| \lesssim \|\I u - u_h\|_h$ and $\|\varepsilon_\pw(\mathcal{J} \I u - \mathcal{R} \I u)\| \lesssim \|\D_\pw(u - \mathcal{R} \I u)\| + |\I u|_{\widehat{\s}}$ from \Cref{lem:conforming-companion} results in
\begin{align*}
	\|\D_\pw(\mathcal{J} u_h - \mathcal{R} u_h)\| \lesssim \|\I u - u_h\|_h + \|\D_\pw(u - \mathcal{R} \I u)\| + |\I u|_{\widehat{\s}}.
\end{align*}
This, \eqref{ineq:proof-efficiency-averaging}, $\|\D_\pw(u - \mathcal{R} \I u)\| \lesssim \|\varepsilon_\pw(u - \mathcal{R} \I u)\|$ from \eqref{ineq:ba-RI}, $\|\I u - u_h\|_h \lesssim \mu_0^{-1/2}\|\I u - u_h\|_{a_h}$ from \Cref{thm:norm-equivalence}, and $\|\varepsilon_\pw(\mathcal{R} u_h - \mathcal{A} u_h)\| \leq \|\D_\pw(\mathcal{R} u_h - \mathcal{A} u_h)\|$ imply
\begin{align}\label{proof:eff-nodal-averaging}
	\|\varepsilon_\pw(\mathcal{R} u_h - \mathcal{A} u_h)\| \lesssim \|\varepsilon_\pw(u - \mathcal{R} \I u)\| + \mu_0^{-1/2}\|\I u - u_h\|_{a_h} + |u_h|_{\widehat{\s}} + |\I u|_{\widehat{\s}}.
\end{align}
From \Cref{lem:epsR-epsh} and a triangle inequality, we deduce that $\|\varepsilon_h u_h - \varepsilon(\mathcal{A} u_h)\| \lesssim \|\varepsilon_\pw(\mathcal{R} u_h - \mathcal{A} u_h)\| + |u_h|_{\widehat{\s}}$.
Hence, the a~priori result
$\mu_0^{1/2}|u_h|_\s \leq \mu_0^{1/2}\|\I u - u_h\|_{a_h} \lesssim \|\sigma - \sigma_h\| + \mathrm{osc}(f,\Tcal) + \mathrm{osc}(g,\Fcal_\mathrm{N})$ from \eqref{ineq:a-priori}, the quasi-optimality $|\I u|_{\widehat{\s}} + \|\varepsilon_\pw(u - \mathcal{R} \I u)\| \lesssim \|(1 - \Pi_{\Tcal}^k) \varepsilon(u)\|$ from \eqref{ineq:stability}, and \eqref{proof:eff-nodal-averaging} show
\begin{align}\label{ineq:eff-nodal-average}
	(\mu_0/\mu_1)\|\mu(\varepsilon_h u_h - \varepsilon(\mathcal{A} u_h))\| \lesssim \|\sigma - \sigma_h\| + \mathrm{osc}(f,\Tcal) + \mathrm{osc}(g,\Fcal_\mathrm{N}).
\end{align}

\subsubsection{Efficiency of Neumann data approximation}
Given $F \in \Fcal_\mathrm{N}$, let $T \in \Tcal$ be the unique simplex with $F \in \Fcal(T)$. Abbreviate $e_k \coloneqq \Pi_F^k g - \sigma_h \nu_F \in P_k(F)^n$. There exists $\varphi \in H^1(\Omega)^n$ with $\varphi = 0$ in $\Omega \setminus T$, $\Pi_F^k \varphi = h_F e_k$, and
\begin{align}\label{ineq:bubble}
	\|\D \varphi\|_{L^2(T)} \lesssim h_F^{-1}\|\varphi\|_{L^2(T)} \lesssim h_F^{-1/2}\|\varphi\|_{L^2(F)} \lesssim h_F^{1/2}\|e_k\|_{L^2(F)}
\end{align}
as in \cite[Subsection 4.3]{ErnZanotti2020}.
An integration by parts and a Cauchy inequality show
\begin{align*}
	((\sigma - \sigma_h)\nu_F, \varphi)_{L^2(F)}
	&= (f + \div\, \sigma_h, \varphi)_{L^2(T)} + (\sigma - \sigma_h, \D \varphi)_{L^2(T)}\\
	&\leq \|f + \div\, \sigma_h\|_{L^2(T)}\|\varphi\|_{L^2(T)} + \|\sigma - \sigma_h\|_{L^2(T)}\|\D \varphi\|_{L^2(T)}.
\end{align*}
Since $h_F\|e_k\|_{L^2(F)}^2 = ((\sigma - \sigma_h)\nu_F, \varphi)_{L^2(F)} + ((1 - \Pi_F^k) g, \varphi)_{L^2(F)}$ from $\sigma \nu_F = g$ on $F$, this, a Cauchy inequality, and \eqref{ineq:bubble} reveal
\begin{align*}
	h_F^{1/2}\|e_k\|_{L^2(F)} \lesssim h_T\|f + \div\, \sigma_h\|_{L^2(T)} + \|\sigma - \sigma_h\|_{L^2(T)} + h_F^{1/2}\|(1 - \Pi_F^k) g\|_{L^2(F)}.
\end{align*}
Therefore, the Pythagoras theorem provides
\begin{align}\label{ineq:efficiency-neumann-data}
	h_F\|g &- \sigma_h \nu_F\|^2_{L^2(F)} = h_F\|(1 - \Pi_F^k) g\|^2_{L^2(F)} + h_F\|e_k\|_{L^2(F)}^2\nonumber\\
	&\lesssim h_F\|(1 - \Pi_F^k) g\|^2_{L^2(F)} + \|h_\Tcal(f + \div \sigma_h)\|_{L^2(T)}^2 + \|\sigma - \sigma_h\|_{L^2(T)}^2.
\end{align}

\subsubsection{Finish of the proof}
The efficiency $h_T\|f + \div \sigma_h\|_{L^2(F)} \lesssim \|\sigma - \sigma_h\| + h_T\|(1 - \Pi_T^k) f\|_{L^2(T)}$ for any $T \in \Tcal$ has been established in \cite[Proof of Theorem 2]{BertrandCarstensenGraessle2021} for the Poisson equation. An extension to the case at hand is straightforward. The efficiency $h_F^{1/2}\|[\sigma_h] \nu_F\|_{L^2(F)}^2 \lesssim \|\sigma - \sigma_h\|_{L^2(\omega(F))} + \|h_\Tcal(f + \div_\pw \sigma_h)\|_{L^2(\omega(F))}$ can be established with the arguments from \cite[Section 1.4.5]{Verfuerth2013}.
Further details on these two terms are therefore omitted.
In combination with \eqref{ineq:eff-nodal-average} and \eqref{ineq:efficiency-neumann-data}, we conclude $(\mu_0/\mu_1)\eta \lesssim \|(1 - \Pi_{\Tcal}^k) \sigma\| + \mathrm{osc}(f,\Tcal) + \mathrm{osc}(g,\Fcal_\mathrm{N})$.\hfill\qed

\subsection{Extension to inhomogeneous Dirichlet boundary data in 2D}
In the following, we derive reliable error estimates for inhomogeneous Dirichlet boundary data in 2D. We assume that $\Gamma_{\mathrm{D}}$ lies on one connectivity component of $\partial \Omega$.
Given $u_\mathrm{D} \in C(\Gamma_{\mathrm{D}})^2 \cap H^1(\Fcal_\mathrm{D})^2$, $f \in L^2(\Omega)^2$, and $g \in L^2(\Gamma_{\mathrm{N}})^2$, the continuous (resp.~discrete) problem seeks the exact (resp.~discrete) solution $u \in u_\D + V$ (resp.~$u_h \in \I u_\D + V_h$) to \eqref{def:continuous-problem} (resp.~\eqref{def:discrete-problem}). 
(Here, $u_\mathrm{D}$ is understood as an $H^1$ extension in the domain $\Omega$.)
The proof of reliability in \Cref{sec:proof-reliability} and the observation $\mathbb{C}^{-1} \tau \perp \mathbb{C} \varepsilon(u_\mathrm{D} + V)$ (instead of $\mathbb{C}^{-1} \tau \perp \mathbb{C} \varepsilon(V)$ in \eqref{ineq:proof-reliability-step-2}) lead to
\begin{align}\label{ineq:a-post-inh1}
	\|\sigma - \sigma_h\| \lesssim (\mu_1/\mu_0)\eta
\end{align}
with $\eta$ from \eqref{def:eta}, where the minimum in \eqref{def:eta} is taken over the set $u_\mathrm{D} + V$ (instead of $V$). Let $\I_\mathrm{D} u_\mathrm{D}$ denote the nodal interpolation of $u_\mathrm{D}$ in $P_{k+1}(\Fcal_\mathrm{D})^2 \cap C(\Gamma_{\mathrm{D}})^2$. 
We choose the nodal average $v \coloneqq \mathcal{A} u_h$ of $\mathcal{R} u_h$ in \eqref{ineq:a-post-inh1}; the degrees of freedom of $\mathcal{A} u_h$ on $\Gamma_{\mathrm{D}}$ are fixed by the point evaluation of $u_\mathrm{D}$ so that $\mathcal{A} u_h|_{\Gamma_{\mathrm{D}}} = \I_\mathrm{D} u_\mathrm{D}$.
Since $\I_\mathrm{D} u_\mathrm{D} \neq u_\mathrm{D}$ in general, additional error quantities occur.
A triangle inequality shows
\begin{align}\label{ineq:a-post-inh2}
	\min_{w \in u_\mathrm{D} + V} \|\varepsilon(w) - \varepsilon_h u_h\| \leq \min_{w \in u_\mathrm{D} + V} \|\varepsilon(w) - \varepsilon(\mathcal{A} u_h)\| + \|\varepsilon(\mathcal{A} u_h) - \varepsilon_h u_h\|.
\end{align}
The minimizer $z$ of $\|\varepsilon(w) - \varepsilon(\mathcal{A} u_h)\|$ among $w \in u_\mathrm{D} + V$ defines $\varrho \coloneqq \varepsilon(z) - \varepsilon(\mathcal{A} u_h) \in L^2(\Omega)^2$ with the boundary condition $(z - \mathcal{A} u_h)|_{\Gamma_{\mathrm{D}}} = u_\mathrm{D} - \I_\mathrm{D} u_\mathrm{D}$ and $L^2$ orthogonality $\varrho \perp \D\,V$ from the Euler-Lagrange equations. Therefore, $\div\,\varrho = 0$ in $\Omega$ and $\varrho \nu = 0$ along $\Gamma_{\mathrm{N}}$. 
The Helmholtz decomposition \cite[Lemma 3.1]{CarstensenDolzmann1998} shows the existence of $\beta = (\beta_1,\beta_2) \in H^1(\Omega)^2$ (even as a $\mathrm{Curl}$ of an $H^2$ function) with
\begin{align}\label{ineq:a-post-inh3}
	\mathrm{Curl}\,\beta = \begin{pmatrix}
		\partial_2 \beta_1 & - \partial_1 \beta_1\\
		\partial_2 \beta_2 & - \partial_1 \beta_2\\
	\end{pmatrix} = \varrho, \qquad \|\beta\|_{H^1(\Omega)} \lesssim \|\varrho\|, \qquad\text{and}
\end{align}
$\beta$ is constant on each connectivity component of $\Gamma_{\mathrm{N}}$, i.e., $\partial \beta/\partial s = 0$ on $\mathrm{relint}(\Gamma_{\mathrm{N}})$.
(The assumptions in \cite{CarstensenDolzmann1998} that (a) $\Gamma_{\mathrm{D}}$ is connected and (b) $\Gamma_{\mathrm{N}}$ and $\Gamma_{\mathrm{D}}$ have positive distance, can be relaxed to the present one with the arguments from here and \cite{CarstensenDolzmann1998}.)
The a.e.~pointwise symmetry of $\mathrm{Curl}\,\beta$ and an integration by parts reveal
\begin{align*}
	\|\varrho\|^2 &= (\varrho, \mathrm{Curl}\,\beta)_{L^2(\Omega)} = (\D z - \D \mathcal{A} u_h, \mathrm{Curl}\,\beta)_{L^2(\Omega)}\\
	&= -\int_{\partial \Omega} (z - \mathcal{A} u_h) \cdot \partial \beta/\partial s \d{s} = -\int_{\Gamma_\mathrm{D}} (u_\mathrm{D} - \I_\mathrm{D} u_\mathrm{D}) \cdot \partial \beta/\partial s \d{s}\\
	&= -\sum_{F \in \Fcal_\mathrm{D}} \int_F (u_\mathrm{D} - \I_\mathrm{D} u_\mathrm{D}) \cdot \partial \beta/\partial s \d{s},
\end{align*}
because $(u_\mathrm{D} - \I_\mathrm{D} u_\mathrm{D})|_F \in H^1_0(F)^2$ on $F \in \Fcal_\mathrm{D}$ and $\partial \beta/\partial s \in H^{-1/2}(\partial \Omega)$ vanishes on $\Gamma_{\mathrm{N}}$ (written $\partial \beta/\partial s \in \widetilde{H}^{-1/2}(\Gamma_{\mathrm{D}})^2$ as its zero extension belongs to $H^{-1/2}(\partial \Omega)$). This holds at least for smooth $\beta$, where an edge-wise integration by parts shows
\begin{align}\label{ineq:rho-integration-by-parts}
	\|\varrho\|^2 = (\D(z - \mathcal{A} u_h), \mathrm{Curl}\,\beta)_{L^2(\Omega)} = \sum_{F \in \Fcal_\mathrm{D}} \int_{F} \beta \cdot \partial (u_\mathrm{D} - \I_\mathrm{D} u_\mathrm{D})/\partial s \d{s}.
\end{align}
The second equation (4.15.b) in \eqref{ineq:rho-integration-by-parts} holds for all smooth functions $\beta$ with $\partial \beta/\partial s = 0$ on $\Gamma_{\mathrm{N}}$. Since those functions can approximate $\beta \in H^1(\Omega)^2$ from \eqref{ineq:a-post-inh3} (by density in $H^1(\Omega)^2$ even with the side restriction that $\beta$ is constant on each connectivity component of $\Gamma_{\mathrm{N}}$), (4.15.b) holds for $\beta$ from \eqref{ineq:a-post-inh3} as well.
Nodal interpolation leads to $\int_F  \partial (u_\mathrm{D} - \I_\mathrm{D} u_\mathrm{D})/\partial s \d{s} = 0$ 
for any $F \in \Fcal_\mathrm{D}$ with the fundamental theorem of calculus along the 1D edge $F$.
This and the Cauchy inequality in \eqref{ineq:rho-integration-by-parts} provide
\begin{align}\label{ineq:rho-bound}
	\|\varrho\|^2 &\leq \sum_{F \in \Fcal_\mathrm{D}} \|(1 - \Pi_F^0) \beta\|_{L^2(F)} \|\partial(u_\mathrm{D} - \I_\mathrm{D} u_\mathrm{D})/\partial \tau\|_{L^2(F)}\nonumber\\
	&\leq \big(\sum_{F \in \Fcal_\mathrm{D}} h_F^{-1}\|(1 - \Pi_F^0) \beta\|_{L^2(F)}^2\big)^{1/2} \mathrm{osc}(u_\mathrm{D},\Fcal_\mathrm{D})
\end{align}
with $\mathrm{osc}(u_\mathrm{D},\Fcal_\mathrm{D})^2 \coloneqq \sum_{F \in \Fcal_\mathrm{D}} h_F\|\partial(u_\mathrm{D} - \I_\mathrm{D} u_\mathrm{D})/\partial s\|_{L^2(F)}^2$.
Given $F \in \Fcal_\mathrm{D}$, let $T_F \in \Tcal$ be the unique triangle with $F \in \Fcal(T_F)$. A trace and Poincar\'e inequality imply
$$
	\sum_{F \in \Fcal_\mathrm{D}} h_F^{-1}\|(1 - \Pi_F^0) \beta\|_{L^2(F)}^2 \leq \sum_{F \in \Fcal_\mathrm{D}} h_F^{-1}\|(1 - \Pi_{T_F}^0) \beta\|_{L^2(F)}^2 \lesssim \|\D \beta\|^2 \lesssim \|\varrho\|^2
$$
with \eqref{ineq:a-post-inh3} in the last step.
From this and \eqref{ineq:rho-bound}, we infer
\begin{align*}
	\|\varrho\| \lesssim \mathrm{osc}(u_\mathrm{D},\Fcal_\mathrm{D}).
\end{align*}
This and \eqref{ineq:a-post-inh1}--\eqref{ineq:a-post-inh2} reveal the reliability of the error estimator $\widetilde{\eta}$,
\begin{align}\label{ineq:eta-inh}
	(\mu_0/\mu_1)^2\|\sigma &- \sigma_h\|^2 \lesssim \widetilde{\eta}^2 \coloneqq \|h_\Tcal(f + \div_\pw \sigma_h)\|^2 + \|\mu(\varepsilon(\mathcal{A} u_h) - \varepsilon_h u_h)\|^2\nonumber\\
	 + \sum_{F \in \Fcal(\Omega)} &h_F\|[\sigma_h]_F \nu_F\|_{L^2(F)}^2 +
	\sum_{F \in \Fcal_\mathrm{N}} h_F\|g - \sigma_h \nu_F\|_{L^2(F)}^2 + \mu_1^2\mathrm{osc}(u_\mathrm{D},\Fcal_\mathrm{D})^2.
\end{align}

\section{Numerical examples}\label{sec:numerical-examples}
This section provides two numerical benchmarks in 2D with locking behaviour observed in \cite{CarstensenEigelGedicke2011} for low-order conforming FEM.

\subsection{Preliminary remarks}
The material parameters $\lambda, \mu$ are given by the formulas $\lambda = E\nu/((1+\nu)(1-2\nu))$ and $\mu = E/(2(1+\nu)) \to \infty$ as $\nu \to 1/2$ with the default Young's modulus $E = 10^5$ and the Poisson ratio $\nu = 0.4999$. 
The only exceptions of those parameters are in \Cref{fig:Cooks-membrane-2}.b and \Cref{fig:Rotated-Lshape-3}.b, where $\nu=0.4999$ is displayed as well as numerical results for $\nu=0.3$ for comparison.

Adaptive computations utilize the refinement indicator
\begin{align}\label{def:local-eta}
	\eta^2(T) &\coloneqq \|h_T(f + \div\, \sigma_h)\|_{L^2(T)}^2 + (\mu|_T)^2 \|\varepsilon (\mathcal{A} u_h) - \varepsilon_h u_h\|_{L^2(T)}^2\nonumber\\
	&\qquad + \sum_{F \in \Fcal(T) \cap \Fcal(\Omega)} h_F\|[\sigma_h]_F \nu_F\|_{L^2(F)}^2 +
	\sum_{F \in \Fcal(T) \cap \Fcal_\mathrm{N}} h_F\|g - \sigma_h \nu_F\|_{L^2(F)}^2
\end{align}
for $T \in \Tcal$ with $\Tcal \in \mathbb{T}$
arising from the error estimator \eqref{def:eta} with the average $\mathcal{A} u_h$ of $\mathcal{R} u_h$.
A standard adaptive loop \cite{Dorfler1996} with the D\"orfler
marking strategy determines a subset $\mathcal{M}$ of minimal cardinality such that
\begin{align*}
	\sum_{T \in \mathcal{T}} \eta^2(T) \leq \frac{1}{2} \sum_{T \in \mathcal{M}} \eta^2(T).
\end{align*}
The convergence history plots display the quantities of interest against the number of degrees of freedom ($\mathtt{ndof}$) in a log-log plot. (Recall the scaling $\mathtt{ndof} \approx h_\mathrm{max}^{-2}$ for uniform meshes in 2D.) Solid lines indicate adaptive, while dashed lines are associated with uniform mesh refinements.

The implementation of the solver for the discrete problem \eqref{def:discrete-problem} has been carried out in MATLAB in an extension to the short MATLAB programs \cite{AlbertyCFunken1999}.
The integrals of polynomials are exactly computed while numerical quadrature is utilized for the integration of non-polynomial functions, e.g., in the computation of the errors.

\begin{figure}[ht]
	\begin{minipage}[t]{0.4\textwidth}
		\centering
		\includegraphics[height=5cm]{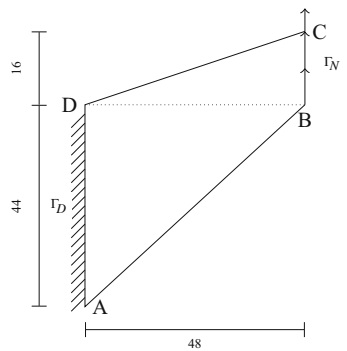}
	\end{minipage}\hfill
	\begin{minipage}[t]{0.6\textwidth}
		\centering
		\includegraphics[height=5cm]{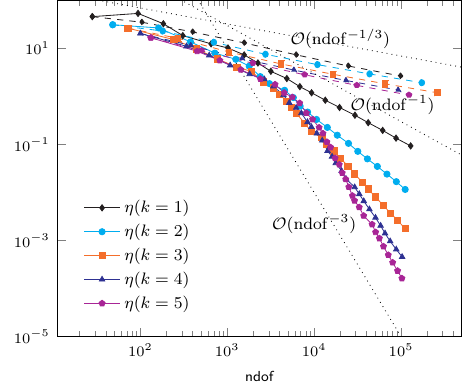}
	\end{minipage}
	\captionsetup{width=1\linewidth}
	\caption{(a) Cook's membrane and (b) convergence history plot of $\eta$ for $k = 1, \dots, 5$ in \Cref{sec:Cooks}}
	\label{fig:Cooks-membrane-1}
\end{figure}

\begin{figure}[ht]
	\begin{minipage}[t]{0.475\textwidth}
		\centering
		\includegraphics[height=5cm]{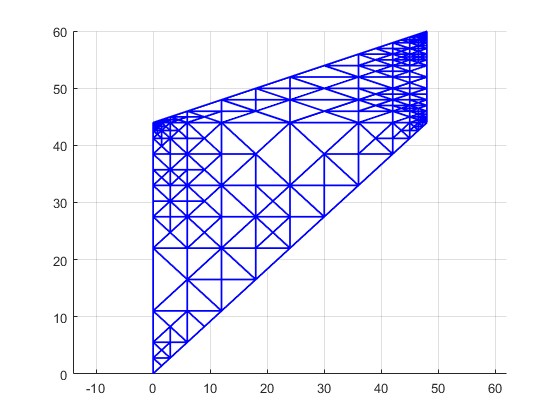}
	\end{minipage}\hfill
	\begin{minipage}[t]{0.475\textwidth}
		\centering
		\includegraphics[height=5cm]{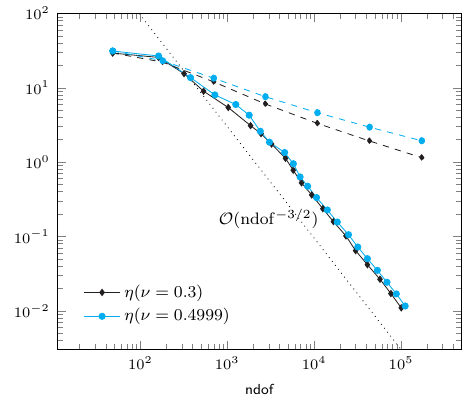}
	\end{minipage}
	\captionsetup{width=1\linewidth}
	\caption{(a) Adaptive triangulation into 592 triangles generated with $k = 2$ and (b) comparison between different $\nu$ with $k = 2$ in \Cref{sec:Cooks}}
	\label{fig:Cooks-membrane-2}
\end{figure}

\subsection{Cook's membrane}\label{sec:Cooks}
The tapered panel $\Omega \coloneqq \mathrm{conv}\{(0,0), (48,44), (48,60)$, $(0,44)\}$ of \Cref{fig:Cooks-membrane-1}.a (courtesy from \cite{CarstensenGallistlGedicke2019}) is clamped on the left-hand side $\Gamma_{\mathrm{D}} \coloneqq \mathrm{conv}\{(0,0),(0,44)\}$, subjected to the shear load $g = (0,1)$ in vertical direction on the right side $\{48\} \times [44,60]$, and traction free elsewhere.
The initial triangulation $\mathcal{T}_0$ consists of two triangles by partition of $\Omega$ along the line $[0,48] \times \{44\}$.
\Cref{fig:Cooks-membrane-1}.b displays the suboptimal convergence rate $1/3$ for the error estimator $\eta$ with all displayed polynomial degrees $k = 1, \dots, 5$. The adaptive algorithm locally refines towards the four corners of the domain $\Omega$ in \Cref{fig:Cooks-membrane-2}.a for $k = 2$ and $|\Tcal| = 592$ and recovers the optimal convergence rates $(k+1)/2$ for $\eta$ in \Cref{fig:Cooks-membrane-1}.b.

\begin{figure}[ht]
	\begin{minipage}[t]{0.475\textwidth}
		\centering
		\includegraphics[height=5cm]{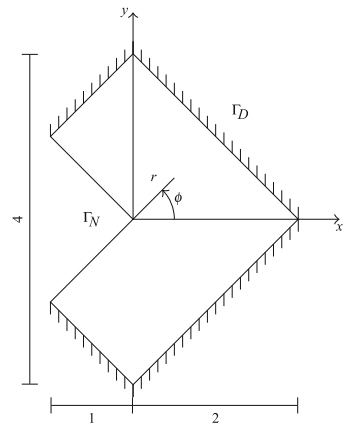}
	\end{minipage}\hfill
	\begin{minipage}[t]{0.475\textwidth}
		\centering
		\includegraphics[height=5cm]{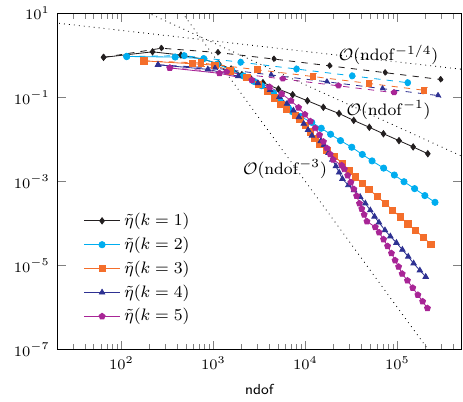}
	\end{minipage}
	\captionsetup{width=1\linewidth}
	\caption{(a) Rotated L-shaped domain and (b) convergence history plot of $\widetilde{\eta}$ for $k = 1,\dots,5$ in \Cref{sec:Lshape}}
	\label{fig:Rotated-Lshape-1}
\end{figure}
\begin{figure}[ht]
	\begin{minipage}[t]{0.475\textwidth}
		\centering
		\includegraphics[height=5cm]{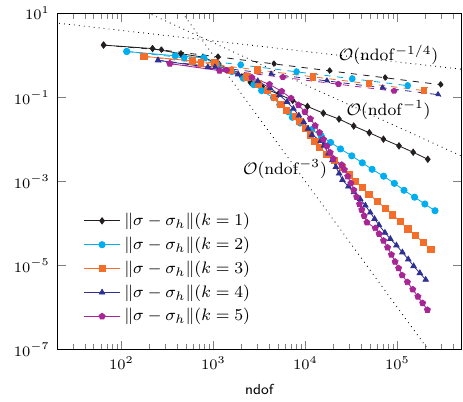}
	\end{minipage}\hfill
	\begin{minipage}[t]{0.475\textwidth}
		\centering
		\includegraphics[height=5cm]{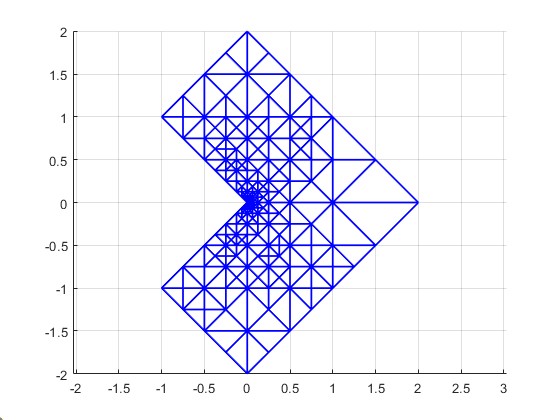}
	\end{minipage}\hfill
	\captionsetup{width=1\linewidth}
	\caption{(a) Convergence history plot of $\|\sigma - \sigma_h\|$ for $k = 1, \dots, 5$ and (b) adaptive triangulation into 545 triangles generated with $k = 2$ in \Cref{sec:Lshape}}
	\label{fig:Rotated-Lshape-2}
\end{figure}
\begin{figure}[ht]
	\begin{minipage}[t]{0.475\textwidth}
		\centering
		\includegraphics[height=5cm]{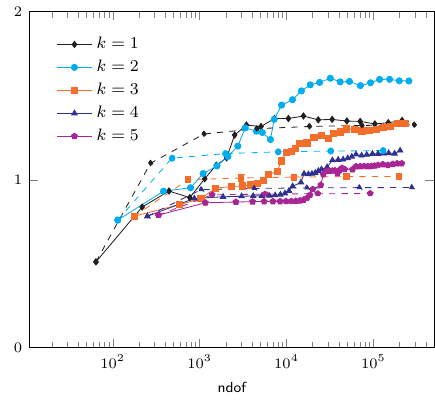}
	\end{minipage}\hfill
	\begin{minipage}[t]{0.475\textwidth}
		\centering
		\includegraphics[height=5cm]{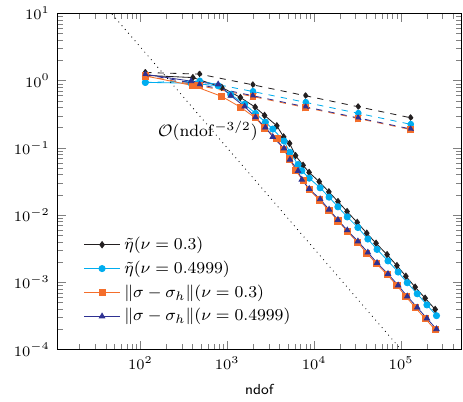}
	\end{minipage}
	\captionsetup{width=1\linewidth}
	\caption{(a) Efficiency index $\widetilde{\eta}/\|\sigma - \sigma_h\|$ for $k = 1,\dots,5$ and (b) comparison between different $\nu$ with $k = 2$ in \Cref{sec:Lshape}}
	\label{fig:Rotated-Lshape-3}
\end{figure}

\subsection{L-shaped domain}\label{sec:Lshape}
The rotated L-shaped domain $\Omega = \mathrm{conv}\{(0,0),(-1,-1)$, $(0,-2),(1,-1),(2,0),(0,2),(-1,1)\}$
with displayed Dirichlet and Neumann boundary parts
in \Cref{fig:Rotated-Lshape-1}.a (courtesy from \cite{CarstensenGallistlGedicke2019}) allows for
the exact solution $u$ to \eqref{def:continuous-problem}, given in polar coordinates,
\begin{align*}
	u_r(r,\varphi) &= \frac{r^\alpha}{2\mu}(-(\alpha+1)\cos((\alpha+1)\varphi) + (C_2-\alpha-1)C_1\cos((\alpha-1)\varphi)),\\
	u_\varphi(r,\varphi) &=
	\frac{r^\alpha}{2\mu}((\alpha+1)\sin((\alpha+1)\varphi) + (C_2+\alpha-1)C_1\sin((\alpha-1)\varphi))
\end{align*}
with the first root $\alpha = 0.544483736782$ of $\alpha \sin(2\omega) + \sin(2\omega\alpha) = 0$ for $\omega = 3\pi/4$,
$C_1 = -\cos((\alpha+1)\omega)/\cos((\alpha-1)\omega)$, and
$C_2 = 2(\lambda+2\mu)/(\lambda+\mu)$ \cite{CarstensenEigelGedicke2011}.
The applied force $f = 0$ and $g = 0$ vanish but inhomogeneous Dirichlet boundary conditions apply with $\|\sigma - \sigma_h\| \lesssim \widetilde{\eta}$ from \eqref{ineq:eta-inh}. (In particular, the local contributions of $\mathrm{osc}(u_\mathrm{D},\Fcal_\mathrm{D})^2$ are added to \eqref{def:local-eta}.)

\Cref{fig:Rotated-Lshape-1}.b and \Cref{fig:Rotated-Lshape-2}.a display convergence rates $1/4$ for $\widetilde{\eta}$ and $\|\sigma - \sigma_h\|$ on uniform triangulations. Adaptive computations refine towards the singularity at the origin as shown in \Cref{fig:Rotated-Lshape-2}.b and recover the optimal convergence rates $(k+1)/2$ for $\widetilde{\eta}$ and $\|\sigma - \sigma_h\|$ with all displayed polynomial degrees $k$.
In this example, the efficiency indices are in the range 0.5 to 2 and do not increase with larger $k$, cf.~\Cref{fig:Rotated-Lshape-3}.a.

\subsection{Conclusions}
In all three numerical examples, the a~posteriori error estimator $\eta$ is reliable, efficient, and $\lambda$-robust. This allows for the approximation of the Stokes equations in the incompressible limit as $\lambda \to \infty$.
The adaptive algorithm driven by the local contributions of $\eta$ recovers the optimal convergence rates for the approximation of singular
solutions and enables higher order convergence rates in typical computational benchmarks.
A comparison with the results for $\nu = 0.3$ displayed in \Cref{fig:Cooks-membrane-2}.b and \Cref{fig:Rotated-Lshape-3}.b verifies empirically the $\lambda$-robustness of the method.

\printbibliography

\end{document}